\newcommand{\showcomments}{yes}
\renewcommand{\showcomments}{no}
\newsavebox{\commentbox}
\newenvironment{com}%
{\ifthenelse{\equal{\showcomments}{yes}}%
{\footnotemark
        \begin{lrbox}{\commentbox}
        \begin{minipage}[t]{1.25in}\raggedright\sffamily\tiny
        \footnotemark[\arabic{footnote}]}
{\begin{lrbox}{\commentbox}}}%
{\ifthenelse{\equal{\showcomments}{yes}}%
{\end{minipage}\end{lrbox}\marginpar{\usebox{\commentbox}}}
{\end{lrbox}}}
\newtheorem{thm}{Theorem}[section]
\newtheorem{lem}[thm]{Lemma}
\newtheorem{cor}[thm]{Corollary}
\newtheorem{conj}[thm]{Conjecture}
\newtheorem{prop}[thm]{Proposition}
\theoremstyle{definition}
\newtheorem{defn}[thm]{Definition}
\newtheorem{rem}[thm]{Remark}
\newtheorem{exmp}[thm]{Example}
\newtheorem{prob}[thm]{Problem}
\newtheorem{guess}[thm]{Guess}
\DeclareMathOperator{\Out}{Out}
\DeclareMathOperator{\link}{link}
\newcommand{\homology}{\ensuremath{{\sf{H}}}}
\newcommand{\curvature}{{\ensuremath{\kappa}}}
\newcommand{\field}[1]{\mathbb{#1}}
\newcommand{\integers}{\ensuremath{\field{Z}}}
\newcommand{\reals}{\ensuremath{\field{R}}}
\newcommand{\hyperbolic}{\ensuremath{\field{H}}}
\newcommand{\alink}{\link_{\uparrow}}
\newcommand{\dlink}{\link_{\downarrow}}
\newcommand{\boundary}   {{\ensuremath \partial}}
\newcommand{\euler}{\chi}
\newcommand{\VF}{\mathcal{VF}}
\newcommand{\mcl}{\mathcal}
\DeclareMathOperator{\girth}{girth}
\DeclareMathOperator{\cd}{cd}
\DeclareMathOperator{\vcd}{vcd}
\newcommand{\bwstyle}{\tikzstyle{every node}=[circle, draw, fill,
                        inner sep=0pt]}
\tikzset{c/.style={every coordinate/.try}}
\begin{document}

\title{Virtually Fibering Right-Angled Coxeter Groups}

\author{Kasia Jankiewicz}
\author{Sergey Norin}
\author[D.~T.~Wise]{Daniel T. Wise}
	\address{Dept. of Math. \& Stats.\\
                    McGill University \\
                    Montreal, Quebec, Canada H3A 0B9}
           \email{kasia.jankiewicz@mail.mcgill.ca}
           \email{snorine@gmail.com}
           \email{wise@math.mcgill.ca}

\subjclass[2010]{20F55}
\keywords{Coxeter groups, Morse theory, Coherent groups}
\date{\today}
\thanks{Research supported by NSERC}

\begin{com}
{\bf \normalsize COMMENTS\\}
ARE\\
SHOWING!\\
\end{com}

\begin{abstract}
 We show that certain right-angled Coxeter groups have finite index subgroups that quotient to $\integers$
 with finitely generated kernels.
  The proof uses Bestvina-Brady Morse theory facilitated by combinatorial arguments.
 We describe a variety of examples where the plan succeeds or fails.
Among the successful examples are the right-angled reflection groups in $\hyperbolic^4$ with fundamental domain
the $120$-cell or the $24$-cell.
  \end{abstract}

\maketitle

\vspace{-1cm}
{\small
\tableofcontents
}

\section{Introduction}
A group $G$ \emph{virtually algebraically fibers} if there is a finite index subgroup $G'$ admitting an
epimorphism $G'\to\integers$ with finitely generated kernel. We do not require any other finiteness properties of
the kernel beyond finite generation. It is an interesting observation of Stallings \cite{Stallings62} that when $G$ is 	
the fundamental group of a $3$-manifold $M$ and $G$ virtually algebraically fibers then the kernel is the
fundamental group of a surface $S$, and the corresponding finite cover of $M$ is an $S$-bundle over a circle.
In fact, with the exception of a limited class of closed graph manifolds,
every compact irreducible 3-manifold $M$ with $\euler(M)=0$ does virtually fiber
\cite{WiseIsraelHierarchy, AgolGrovesManning2012, LiuGraphManifolds, PrzytyckiWiseMixed}.

The goal of this paper is  to obtain virtual algebraic fibering of a right-angled Coxeter group $G$.
The group $G$ acts properly and cocompactly on a CAT(0) cube complex $\widetilde X$.
The method of this paper is to use a combinatorial argument to choose a finite index  torsion-free subgroup $G'$, so that
there is a
function $X\rightarrow S^1$  on
the compact nonpositively curved cube complex $X= G' \backslash \widetilde X$ that lifts to a Bestvina-Brady Morse function $\widetilde X\to \reals.$
Although the situation can become quite complicated, our method enables us to produce Morse functions even when $G$ is associated to a simplicial graph $\Gamma$ that is quite large.

The initial motivation was to examine whether the Bestvina-Brady theory can be applied successfully for a hyperbolic 3-manifold in the simple setting of a right-angled hyperbolic reflection group.
In the uniform case, we find that it can be applied for an infinite family of dual L\"{o}bell graphs (see Lemma~\ref{lem:LobellGraphsWin}),
in the non-uniform case, we find that there are examples where Bestvina-Brady cannot be applied to \emph{any} finite index subgroup quotienting $\widetilde X$
(see Example~\ref{exmp:tutte}). This last realization is established by connecting the problem to an old conjecture of Tait about hamiltonian cycles in graphs,
to which a famous counterexample was provided by Tutte.
Overall our results provide some evidence that Bestvina-Brady Morse theory can be applied towards virtual fibering as an alternate route to Agol's criterion \cite{Agol08}.
However, Tutte's example demonstrates that alternate cube complexes would have to be utilized - even for certain cusped hyperbolic manifolds.

In two other highly noteworthy cases, we successfully apply the theory to a uniform and a non-uniform 4-dimensional hyperbolic reflection group:
namely, the reflection groups whose fundamental domain is the right-angled hyperbolic 120-cell, and the right-angled hyperbolic 24-cell.
These yield the first examples of higher-dimensional hyperbolic manifolds whose fundamental group virtually algebraically fiber.

Most of the paper is dedicated to examples which begin to substantiate the larger goal of obtaining virtual algebraic fibering of virtually special groups in a broader context than for 3-manifold groups.

{\bf Acknowledgement:} We are grateful to the referee for helpful comments and corrections.

\subsection{Contextualization}
Bestvina and Brady \cite{BestvinaBrady97} introduced their highly influential theory partially in response to a flawed attempt by Gromov to produce non-hyperbolic groups that do not contain $\integers^2$. Their theory generalizes earlier work of Stallings \cite{Stallings63} and Bieri \cite{BieriBook81} and led to many interesting examples of subgroups with exotic finiteness properties. Partially realizing Gromov's goal, Brady \cite{Brady99} applied this theory to obtain a remarkable example of a word-hyperbolic group with a finitely presented subgroup that is not word-hyperbolic. It remains an open problem whether there can be such a subgroup with a finite $K(\pi,1)$. We suggest a possible approach towards this in Section~\ref{sec:7}. Our approach creates a platform which enables the Bestvina-Brady theory to reach complicated examples. It seems that some exotic examples necessarily arise in a fairly involved setting.

Virtual algebraic fibering was investigated for Bourdon buildings in \cite{WiseRandomMorse} and for Coxeter groups having higher bounded exponents in \cite{JankiewiczWise2015}. In those cases the Bestvina-Brady Morse function was instead produced using the probabilistic method.

\subsection{Guide to reading this text:}

The ``combinatorial game'' that we are playing here is a self-contained problem about graphs
described in Section~\ref{sec:legal systems} and illustrated there by two simple examples. Its vocabulary, ``systems of moves'', ``legal states'' and so forth is used heavily throughout the text.

We review the right-angled Coxeter group $G(\Gamma)$ associated to the simplicial graph $\Gamma$ in Section~\ref{sub:racg}. We also describe there the CAT(0) cube complex $\widetilde X$ that $G$ acts on cocompactly, which is the Davis complex of $G$.
We describe a conjecture about algebraic virtual fibering and Euler characteristic for some of these
right-angled Coxeter groups.

A brief review of the part of Bestvina-Brady Morse theory that we utilize is described in Section~\ref{sub:bestvina brady}. In Section~\ref{sub:legal system fibers}, we present the focal point of the paper, Theorem~\ref{thm:win fiber}, which explains that a legal system for $\Gamma$ leads to virtual algebraic fibering of $G(\Gamma)$.

Section~\ref{sec:succeeds} exhibits a variety of examples where our method successfully provides algebraic virtual fibering.

Section~\ref{sec:fails} describes examples where our method fails, and indeed any attempt to virtually apply the Bestvina-Brady theory for these examples must fail.

Section~\ref{sec:7} poses a problem aiming to produce exotic subgroups of hyperbolic groups following Brady as discussed above.

In Section~\ref{sec:8} we investigate the applicability of the method to random right-angled Coxeter groups associated to  Erd\H{o}s-R\'enyi random graphs.

\section{Systems of moves and legal states}\label{sec:legal systems}

Let $\Gamma=\Gamma(V,E)$ be a simplicial graph with vertices $V$ and edges $E$.

A \emph{state} of $\Gamma$ is a subset $S\subset V$.
The state is \emph{legal} if the subgraphs induced by $S$ and by the complement $V-S$ of $S$ in $V$ are both nonempty and connected.
A \emph{move at} $v\in V$ is an element $m_v\in 2^V$ with the following property:
\begin{enumerate}
\item\label{move prop:1} $v\in m_v$.
\item\label{move prop:2} $u\not\in m_v$ if $\{u,v\}\in E$.
\end{enumerate}

A \emph{system} of moves is a choice $m_v$ of a move for each $v\in V$. We do not assume that $m_v\neq m_u$ for $v\neq u$.

We identify $\integers_2^V$ with $2^V$ in the obvious way where $\emptyset$ is the identity element
and multiplication is the symmetric difference.
%
Accordingly, each state and each move is identified with an element of $\integers_2^V$.
The set of moves generates a subgroup $M$ of $\integers_2^V$.
The system is \emph{legal} if there is an $M$-orbit all of whose elements are legal states.
We refer to such an orbit as a \emph{legal orbit}.

A state $S$ is \emph{strongly legal} if $S$ is legal, and moreover,
each vertex of $S$ is adjacent to a vertex of $V-S$ and vice-versa.
It follows from the definitions that every state in a legal orbit is actually strongly legal.

\begin{exmp}
In many cases, our system of moves arises as follows:  There is a partition $V=\sqcup_i V_i$ where each $V_i$ has the property that no two of its vertices are adjacent.
The move associated to $v$ is the element $V_i$ containing $v$. We refer to such a system of moves as a \emph{colored system}.
\end{exmp}

\begin{exmp}
Consider the following graph $\Gamma$ on four vertices and the system of moves given by the partition of $\Gamma$ into three parts:
\begin{align*}
&&m_1 =
\bwstyle
\begin{tikzpicture}[baseline=2.5ex, scale=.75]
\draw[white, fill = white,minimum width=4pt]
{
	(0,0) node {}
	(1,1) node {}
};
\draw[black,minimum width=2pt]
{
	(0,1) node {} -- (1,1) node {} -- (1,0) node {} -- (0,0) node {} --(0,1) -- (1,0)
};
\draw[black, fill = white,minimum width=4pt]
{
	(0,0) node {}
	(1,1) node {}
};
\end{tikzpicture}
&&m_2 =
\bwstyle\begin{tikzpicture}[baseline=2.5ex, scale=.75]
\draw[white, fill = white,minimum width=4pt]
{
	(0,0) node {}
	(1,1) node {}
};
\draw[black,minimum width=2pt]
{
	(0,1) node {} -- (1,1) node {} -- (1,0) node {} -- (0,0) node {} --(0,1) -- (1,0)
};
\draw[black, fill = white,minimum width=4pt]
{
	(0,1) node {}
};
\end{tikzpicture}
&&m_3 =
\bwstyle\begin{tikzpicture}[baseline=2.5ex, scale=.75]
\draw[white, fill = white,minimum width=4pt]
{
	(0,0) node {}
	(1,1) node {}
};
\draw[black,minimum width=2pt]
{
	(0,1) node {} -- (1,1) node {} -- (1,0) node {} -- (0,0) node {} --(0,1) -- (1,0)
};
\draw[black, fill = white,minimum width=4pt]
{
	(1,0) node {}
};
\end{tikzpicture}&&
\end{align*}
This is a legal system since there is a legal orbit:
\begin{align*}
\bwstyle
S = \begin{tikzpicture}[baseline=2.5ex, scale=.75]
\draw[black]
{
	(1,0) to (0,0) to (0,1)
};
\draw[WildStrawberry, thick, fill = WildStrawberry, minimum width=3pt]
{
	(0,0) node {}
};
\draw[LimeGreen, thick, minimum width=3pt]
{
	(1,0) node {} -- (0,1) node {} -- (1,1) node {} -- (1,0)
};
\end{tikzpicture}&&
\bwstyle
m_1S = \begin{tikzpicture}[baseline=2.5ex, scale=.75]
\draw[black]
{
	(1,0) to (1,1) to (0,1)
};
\draw[WildStrawberry, thick, fill = WildStrawberry, minimum width=3pt]
{
	(1,1) node {}
};
\draw[LimeGreen, thick, minimum width=3pt]
{
	(1,0) node {} -- (0,1) node {} -- (0,0) node {} -- (1,0)
};
\end{tikzpicture}
&&\bwstyle
m_2S =  \begin{tikzpicture}[baseline=2.5ex, scale=.75]
\draw[black]
{
	(0,0) to (1,0) to (0,1) to (1,1)
};
\draw[WildStrawberry, thick, fill = WildStrawberry, minimum width=3pt]
{
	(0,0) node {} -- (0,1) node {}
};
\draw[LimeGreen, thick, minimum width=3pt]
{
	(1,1) node {} -- (1,0) node {}
};
\end{tikzpicture}
&&\bwstyle
m_3S =  \begin{tikzpicture}[baseline=2.5ex, scale=.75]
\draw[black]
{
	(0,0) to (0,1) to (1,0) to (1,1)
};
\draw[WildStrawberry, thick, fill = WildStrawberry, minimum width=3pt]
{
	(0,0) node {} -- (1,0) node {}
};
\draw[LimeGreen, thick, minimum width=3pt]
{
	(1,1) node {} -- (0,1) node {}
};
\end{tikzpicture}&
\\
\bwstyle
m_2m_3S =  \begin{tikzpicture}[baseline=2.5ex, scale=.75]
\draw[black]
{
	(1,0) to (1,1) to (0,1)
};
\draw[WildStrawberry, thick, minimum width=3pt]
{
	(0,0) node {} -- (1,0) node {} -- (0,1) node {} -- (0,0)
};
\draw[LimeGreen, thick, minimum width=3pt]
{
	(1,1) node {}
};
\end{tikzpicture}
&&\bwstyle
m_1m_2S =  \begin{tikzpicture}[baseline=2.5ex, scale=.75]
\draw[black]
{
	(0,0) to (0,1) to (1,0) to (1,1)
};
\draw[WildStrawberry, thick, minimum width=3pt]
{
	(0,1) node {} -- (1,1) node {}
};
\draw[LimeGreen, thick, minimum width=3pt]
{
	(1,0) node {} -- (0,0) node {}
};
\end{tikzpicture}
&&\bwstyle
m_1m_3S =  \begin{tikzpicture}[baseline=2.5ex, scale=.75]
\draw[black]
{
	(0,0) to (1,0) to (0,1) to (1,1)
};
\draw[LimeGreen, thick, minimum width=3pt]
{
	(0,0) node {} -- (0,1) node {}
};
\draw[WildStrawberry, thick, minimum width=3pt]
{
	(1,1) node {} -- (1,0) node {}
};
\end{tikzpicture}
&&\bwstyle
m_1m_2m_3S = \begin{tikzpicture}[baseline=2.5ex, scale=.75]
\draw[black]
{
	(1,0) to (0,0) to (0,1)
};
\draw[LimeGreen, thick, minimum width=3pt]
{
	(0,0) node {}
};
\draw[WildStrawberry, thick, minimum width=3pt]
{
	(1,0) node {} -- (0,1) node {} -- (1,1) node {} -- (1,0)
};
\end{tikzpicture}
\end{align*}
Note that the orbit of
$\bwstyle
S' =
\begin{tikzpicture}[baseline=2.5ex, scale=.75]
\draw[black]
{
	(1,1) to (1,0) to (0,0)
	(1,0) to (0,1)
};
\draw[WildStrawberry, thick, minimum width=3pt]
{
	(0,0) node {} -- (0,1) node {} -- (1,1) node {}
};
\draw[LimeGreen, thick, minimum width=3pt]
{
	(1,0) node {}
};
\end{tikzpicture}$
contains the non-legal state
$m_2 S' =
\tikzstyle{every node}=[circle, draw, fill,
                        inner sep=0pt, ]
\begin{tikzpicture}[baseline=2.5ex, scale=.75]
\draw[black]
{
	(1,1) to (1,0) to (0,0) to (0,1) to (1,1)
};
\draw[WildStrawberry, thick, minimum width=3pt]
{
	(0,0) node {}
	(1,1) node {}
};
\draw[LimeGreen, thick, minimum width=3pt]
{
	(1,0) node {} -- (0,1) node {}
};
\end{tikzpicture}$.
\end{exmp}

\section{Coxeter Groups, Curvature, and a Guess}
\newcommand{\NotTOCsubsection}[1]
{\subsection{#1}}
\NotTOCsubsection{Right-angled Coxeter Groups}\label{sub:racg}
Let $G(\Gamma)$ be the right-angled Coxeter group associated to $\Gamma$.
Let $\widetilde X$ denote the associated CAT(0) cube complex that $G$ acts on properly and cocompactly, which is known as the Davis complex of $G$.
We recall that the $1$-skeleton of $\widetilde X$ is isomorphic to the Cayley graph of $G$ after identifying each bigon to an edge, and $n$-cubes are equivariantly added to the $1$-skeleton for each collection of $n$ pairwise commuting generators (see for instance \cite[Chap. 7]{DavisCoxeterBook2008}).
\begin{com} Davis defines $X$ in section 1.2 of his book as the cubical complex $P_L$ associated to the simplicial complex $L$. Then he discussed $\tilde P_L$ and its CAT(0) structure.
\end{com}
Let $\alpha:G\rightarrow \integers_2^V$ denote the homomorphism induced by $s\mapsto\{s\}$,
so $\alpha$ is merely the abelianization homomorphism.
Let $G'=\ker(\alpha)$. Let $X = G'\backslash \widetilde X$.

\NotTOCsubsection{The Charney-Davis Curvature}
\begin{defn}[Curvature of $\Gamma$]
For a finite simplicial graph $\Gamma$ we define
its \emph{$n$-curvature} $\kappa_n(\Gamma) = \sum_{i=-1}^n (-2)^{i+1}|K_i|$ where $K_i$ is the set of $i$-cliques.
 Note that $|K_{-1}|=1$ as there is a unique $(-1)$-simplex.

We are specifically interested in $\kappa_2(\Gamma)=1-\frac{|V|}{2} +\frac{|E|}{4}$.
\end{defn}
Let $\kappa = \kappa_{\infty}$.
By distributing $\frac{1}{2^d}$ of the Euler characteristic $(-1)^d$ concentrated at an open $d$-cube among each of its $2^d$ vertices, we obtain the following tautological formula for a compact cube complex $Y$:
\begin{equation}\label{eq:euler curvature}
\euler(Y)=\sum_{y\in Y^0} \curvature(\link(y))
\end{equation}

For $X$ and $\Gamma$ as in Section~\ref{sub:racg} we have:
 $\euler(X)= 2^d\curvature(\Gamma)$.

\NotTOCsubsection{A Guess}
As mentioned in the introduction, a primary aspiration of this paper is to test the possibility of understanding virtual algebraic fibering of special groups
by applying Bestvina-Brady theory to finite covers of nonpositively curved cube complexes.
This was the original intended approach towards the virtual fibering problem by one of the authors. It was side-stepped by Agol who gave a criterion for virtual fibering that employed Gabai's sutured-manifold technology \cite{Agol08}.
Other forays that tested this approach using the probabilistic method were given in \cite{WiseRandomMorse,JankiewiczWise2015}.

We are guided by the following optimistic guess.
Although our approach confirms this guess in many cases, we show the approach is not always applicable
in Section~\ref{sub:duals}.
\begin{guess}\label{guess:naive}
Let $G(\Gamma)$ be a finitely generated right-angled Coxeter group.
Suppose
\begin{com} There might be a way of restating this in terms of properties of $\Gamma$.
\end{com}
$\kappa_2(\Gamma)\geq 0$. Then either:
\begin{enumerate}
\item  $G$ is virtually abelian;
\item $G$ splits over a virtually abelian subgroup;
\item $G$ has a non virtually abelian sub-Coxeter group $G'$ that virtually algebraically fibers.
\end{enumerate}
\end{guess}
We are grateful to Mike Davis for drawing attention to the connection with the conjecture in \cite{DavisOkun2001} that the orbifold associated to a right-angled Coxeter group $G(\Gamma)$ has a finite index cover fibering over a circle when the flag complex of $\Gamma$ is an even dimensional sphere.

\section{Legal Systems Provide Virtual Fiberings}

\subsection{Bestvina-Brady Morse Theory}\label{sub:bestvina brady}
A \emph{diagonal map} $[0,1]^d\rightarrow S^1$ on a $d$-cube is the restriction of the
composition $\reals^d \rightarrow \reals \rightarrow S^1$ where the first map
is the linear map  $(x_1,\dots, x_d)\mapsto \sum_i \pm x_i$ and the second map is the quotient $\reals / \integers
=S^1$.
 A \emph{diagonal map} on a cube complex is a map $X\rightarrow S^1$ whose restriction to each cube of $X$
 is a diagonal map.

Consider a diagonal map $\phi:X\to S^1$, and let $\tilde \phi:\widetilde X \to \reals$.
  A $(d-1)$-simplex of $\link(x)$ is \emph{ascending} (resp. \emph{descending}) if the restriction of $\tilde \phi$ to the corresponding $d$-cube has a minimum (resp. maximum) at $x$. The \emph{ascending link}  $\alink(x)$ (resp. \emph{descending link} $\dlink(x)$) is the subcomplex of $\link(x)$ consisting of all ascending (resp. descending) vertices and edges. Bestvina and Brady proved the following in \cite{BestvinaBrady97}:
\begin{thm}
If each ascending and descending link is connected then the kernel of $\phi_*:\pi_1X\rightarrow \integers$
is finitely generated.
\end{thm}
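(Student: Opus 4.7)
The plan is to apply Bestvina--Brady Morse theory on the intermediate cover $\bar X := \tilde X / K$, where $K := \ker(\phi_*)$. The lift $\tilde\phi$ descends to $\bar\phi : \bar X \to \reals$, the deck group $\integers = \pi_1(X)/K$ acts on $\bar X$ by translations shifting $\bar\phi$ by integers, and $\pi_1(\bar X) = K$. Cocompactness of the $\integers$-action makes each slab $Y_{[a,b]} := \bar\phi^{-1}([a,b])$ compact, and $\bar X = \bigcup_n Y_{[-n,n]}$. The goal is to prove that the inclusion $Y_{[-1,1]} \hookrightarrow \bar X$ is $\pi_1$-surjective; since $Y_{[-1,1]}$ is a compact CW complex, its fundamental group is finitely generated, and this will force $K$ to be finitely generated.

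The next step is a local Morse analysis. A diagonal map is linear on each cube and takes its extrema at a unique pair of antipodal vertices, so the critical points of $\bar\phi$ occur only at vertices. A direct examination of the star of a vertex $v$ with $t_0 = \bar\phi(v)$ shows that, up to homotopy equivalence,
\[
\bar\phi^{-1}(-\infty, t_0 + \epsilon] \;\simeq\; \bar\phi^{-1}(-\infty, t_0 - \epsilon] \;\cup_{\alink(v)}\; \mathrm{Cone}(\alink(v)),
\]
and symmetrically the superlevel set $\bar\phi^{-1}[t_0 - \epsilon, \infty)$ is obtained from $\bar\phi^{-1}[t_0 + \epsilon, \infty)$ by attaching $\mathrm{Cone}(\dlink(v))$ along $\dlink(v)$.

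Whenever $L \subset A$ is nonempty and connected, Van Kampen's theorem shows that $\pi_1(A \cup_L \mathrm{Cone}(L))$ is a quotient of $\pi_1(A)$, so the inclusion $A \hookrightarrow A \cup_L \mathrm{Cone}(L)$ is $\pi_1$-surjective and $\pi_0$-surjective. Under the hypothesis that every $\alink$ and $\dlink$ in $X$ is connected, and noting that the corresponding links in $\bar X$ are isomorphic copies, each individual critical-value crossing is $\pi_1$-surjective. Only finitely many critical values occur in $[n, n+1]$ or $[-(n+1), -n]$, so stringing the corresponding attachments together proves that $Y_{[-n,n]} \hookrightarrow Y_{[-(n+1),n+1]}$ is $\pi_1$-surjective. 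Since every loop in $\bar X$ lies in some $Y_{[-n,n]}$, the composite $\pi_1(Y_{[-1,1]}) \to \pi_1(\bar X) = K$ is surjective, and $K$ is finitely generated.

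The main technical obstacle is the local lemma identifying the sublevel-set transition with a cone attachment on the relevant link. One must handle several critical vertices sharing a critical value by working in disjoint star neighborhoods, and verify that $\mathrm{Star}(v) \cap \{\bar\phi \geq t_0\}$ deformation retracts onto $\mathrm{Cone}(\alink(v))$, glued along $\alink(v)$. These verifications are purely local and amount to inspecting cubes containing $v$ one at a time --- this is the content of Bestvina--Brady's Morse lemma.
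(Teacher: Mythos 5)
The paper does not prove this theorem; it is quoted verbatim from Bestvina--Brady \cite{BestvinaBrady97}, so there is no in-paper argument to compare against. Your sketch is, in structure, exactly the standard Bestvina--Brady proof: pass to the infinite cyclic cover, show each slab inclusion $Y_{[-n,n]}\hookrightarrow Y_{[-(n+1),n+1]}$ is $\pi_1$-surjective via the Morse Lemma, and conclude from compactness of a single slab. That is sound, including the reduction of ``finitely many critical values per unit slab'' to compactness and the remark about handling simultaneous critical vertices in disjoint stars.

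One genuine error, though a harmless one here: your local Morse lemma has the two links swapped. Growing the sublevel set upward past a critical vertex $v$ attaches $\mathrm{Cone}(\dlink(v))$ along $\dlink(v)$, not the ascending link --- the cubes in which $v$ is the \emph{maximum} are the ones already partially present in $\bar\phi^{-1}(-\infty,t_0-\epsilon]$, and the new corner at $v$ is the cone on the corresponding descending simplices; the ascending corners are cones that simply retract to $v$. Symmetrically, growing the superlevel set downward attaches $\mathrm{Cone}(\alink(v))$ along $\alink(v)$. (Check this on the edge $[0,1]$ with $f(x)=x$ at the vertex $1$.) Because the theorem hypothesizes connectivity of \emph{both} ascending and descending links, the swap does not damage the argument, but you should correct it: under the weaker hypothesis that only descending links are connected, one still gets $\pi_1$-surjectivity of upward slab extensions, and your labeling would attribute that to the wrong hypothesis. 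One further small point to make explicit: the Morse Lemma also gives $\pi_0$-bijectivity of the slab inclusions (coning off a nonempty connected subcomplex does not change $\pi_0$), which is what guarantees $Y_{[-1,1]}$ is connected, so that ``$\pi_1(Y_{[-1,1]})$ finitely generated'' makes sense with a single basepoint.
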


\begin{rem}
 A diagonal map is determined by (and equivalent to) directing the $1$-cubes of $X^1$ so that opposite $1$-cubes of each $2$-cube agree.
From this viewpoint, the ascending and descending links correspond to the induced subcomplexes
of outgoing and incoming 1-cubes.

Since connectedness of a simplicial complex is determined by the connectedness of its 1-skeleton we will
focus entirely on the 1-skeleton when discussing the ascending and descending links.
\end{rem}

\subsection{Virtually Algebraically Fibering $G(\Gamma)$}
\label{sub:legal system fibers}
\begin{thm}\label{thm:win fiber}
Let $\Gamma$ be a finite graph.
Suppose there is a system $m:V\rightarrow 2^V$ with a legal orbit.
Then there is a
diagonal 
map $\phi:X\rightarrow S^1$ whose ascending and descending links
are nonempty and connected.
\end{thm}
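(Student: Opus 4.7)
The plan is to construct the diagonal map explicitly so that at every vertex of $X$ the ascending link realizes a prescribed legal state from the orbit. First identify the vertex set of $X$ with $\integers_2^V$ via the abelianization $\alpha$; under this identification the $v$-labeled edge at $T\in\integers_2^V$ joins $T$ with $T+\{v\}$, and the link of each vertex is the flag complex on $\Gamma$. By the preceding remark, a diagonal map $\phi:X\to S^1$ amounts to an orientation $\sigma_v(T)\in\{\pm 1\}$ of each edge subject to the $2$-cube condition $\sigma_v(T)=\sigma_v(T+\{w\})$ for every $\{v,w\}\in E$.

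Fix a legal state $S_0$ in the legal orbit $\mathcal{O}$, and define
\[
    \pi(T) \;=\; S_0+\sum_{v\in T} m_v \qquad (T\in\integers_2^V),
\]
where the sum is taken in $\integers_2^V$. Orient the $v$-labeled edge at $T$ to point from $T$ towards $T+\{v\}$ exactly when $v\in \pi(T)$. Because $\pi(T+\{v\})=\pi(T)+m_v$ and the move axiom gives $v\in m_v$, the two endpoints of this edge prescribe opposite orientations, so the assignment descends to a well-defined orientation on $X$. For any $\{v,w\}\in E$ the move axiom gives $v\notin m_w$, so $v\in\pi(T+\{w\})$ iff $v\in\pi(T)$; this is exactly the $2$-cube condition. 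Thus $\phi$ is a bona fide diagonal map.

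Every summand $m_v$ lies in $M$, so $\pi(T)-S_0\in M$ and hence $\pi(T)\in\mathcal{O}$ for every $T$. Thus $\pi(T)$ is a strongly legal state, and the subgraphs of $\Gamma$ induced by $\pi(T)$ and by $V-\pi(T)$ are each nonempty and connected. Since $\alink(T)$ and $\dlink(T)$ are by construction the flag complexes on these two induced subgraphs, and connectedness of such a complex is detected on its $1$-skeleton, both the ascending and the descending links are nonempty and connected, as required.

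The only substantive check is the translation between the move axioms and the compatibility conditions on $\sigma_v$: the axiom $v\in m_v$ is precisely what makes the two endpoints of an edge agree on its orientation, and the axiom $v\notin m_w$ for $\{v,w\}\in E$ is precisely what enforces the $2$-cube condition. Once that translation is in place the legality of every element of $\mathcal{O}$ propagates to every vertex of $X$ through the $\integers_2$-linear formula for $\pi$, and I do not expect any further obstacle.
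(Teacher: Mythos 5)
Your construction is correct and is essentially the paper's own argument: the paper likewise assigns to each vertex $z\hat x$ of $X\cong\integers_2^V$ the state obtained from a base state by applying the moves corresponding to $z$, and then invokes Move Property (1) for consistency of each edge's orientation at its two endpoints and Move Property (2) for the opposite-sides-of-a-square condition. Your explicit formula $\pi(T)=S_0+\sum_{v\in T}m_v$ just makes precise the paper's slightly informal ``associate $z\hat x$ with $zS$.''
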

\begin{cor}\label{cor:win f.g.}
$G$ has a finite index subgroup $G'$ such that there is an epimorphism $G'\rightarrow \integers$
with finitely generated kernel.
\end{cor}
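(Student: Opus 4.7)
The plan is to construct $\phi$ directly from the move system. A diagonal map on $X$ is equivalent to an orientation of its $1$-cubes such that opposite $1$-cubes in each $2$-cube are oriented consistently. Since $X = G'\backslash \widetilde X$ for the abelianization kernel, its vertices are in bijection with $\integers_2^V$; at each vertex $s$ the $v$-edge connects $s$ to $s+\{v\}$, and the $2$-cubes correspond to edges $\{u,v\}\in E$ of $\Gamma$. My construction extends $v\mapsto m_v$ linearly over $\integers_2$ to a homomorphism $L:\integers_2^V\to\integers_2^V$ (whose image is the subgroup $M\leq\integers_2^V$ generated by the $m_v$'s). After fixing any legal state $s_0\in\mathcal{O}$, I declare the $v$-edge at $s$ to be \emph{outgoing at $s$} exactly when $v\in L(s)+s_0$ (viewed as a subset of $V$).

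The two axioms defining a move system translate directly into the two conditions required for this orientation to be a diagonal map. The axiom $v\in m_v$ gives $(m_v)_v=1$, so $v\in L(s)+s_0$ holds precisely when $v\notin L(s+\{v\})+s_0=L(s)+m_v+s_0$; the orientation is therefore consistent whether viewed from $s$ or from $s+\{v\}$. The axiom that $u\notin m_v$ whenever $\{u,v\}\in E$ gives $(m_u)_v=0$ for $\{u,v\}\in E$, so the $v$-coordinates of $L(s)+s_0$ and $L(s+\{u\})+s_0=L(s)+m_u+s_0$ coincide; this is exactly the condition that the two opposite $v$-edges of the $2$-cube corresponding to $\{u,v\}$ receive the same orientation.

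Finally, by construction the ascending link $\alink(s)$ is the subgraph of $\Gamma$ induced on $L(s)+s_0$, and the descending link $\dlink(s)$ is induced on the complement $V-(L(s)+s_0)$. Since $L$ lands in $M$ and $\mathcal{O}=s_0+M$, we have $L(s)+s_0\in\mathcal{O}$ for every $s\in\integers_2^V$, so $L(s)+s_0$ is a legal state. Legality is by definition the statement that both this set and its complement induce nonempty connected subgraphs of $\Gamma$, so $\alink(s)$ and $\dlink(s)$ are nonempty and connected at every vertex of $X$.

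There is no serious technical obstacle; the only point requiring thought is guessing the right formula. Once one recognizes that the two defining properties of a move system translate verbatim into edge-consistency and $2$-cube-consistency of the orientation, and that being in the legal orbit is exactly what is needed to make every ascending/descending link a legal state, the verification is mechanical.
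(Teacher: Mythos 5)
Your proposal is correct and is essentially the paper's own argument for Theorem~\ref{thm:win fiber}: your homomorphism $L$ and the rule ``$v$-edge outgoing at $s$ iff $v\in L(s)+s_0$'' is exactly the paper's assignment $z\hat x\mapsto zS$ with outgoing edges given by membership in $zS$, and your two consistency checks are precisely Move Properties~\eqref{move prop:1} and~\eqref{move prop:2} used the same way, with the ascending and descending links identified with a legal state and its complement. The corollary then follows, as in the paper, by feeding the resulting diagonal map into the quoted Bestvina--Brady theorem.
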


\begin{proof}[Proof of Theorem~\ref{thm:win fiber}]
Let $S$ be a state such that each element of $\langle m_v : v\in V\rangle S$ is legal.
Consider a base 0-cube $\hat x\in X^0$.
For each $z\in \integers_2^v$ we associate $z\hat x$  with $zS$.
This determines a way to direct the 1-cubes at $z\hat x$.
Specifically: A 1-cube is outgoing if the corresponding vertex $v$ of $\link(z\hat x)$ is in $zS$
and it is incoming if $v\not\in zS$.

Let $z_1\hat x$ and $z_2\hat x$ be the endpoints of a 1-cube $c$ corresponding to a move $v$ of $G$,
and note that $z_1= m_vz_2$.
Since $z_1=m_vz_2$ we see that the direction of $c$ induced by $z_1S$ and $z_2S$ agree by Move Property~\eqref{move prop:1}.
 Move Property~\eqref{move prop:2} implies that the opposite sides of each 2-cube are directed consistently.
 \end{proof}

\begin{rem}
We have restricted ourselves to Bestvina-Brady Morse functions associated to diagonal maps. These
correspond to those for which the minimum and maximum vertices of each square are antipodal.
In fact, a slight generalization of legal system is equivalent to
the existence of a diagonal Bestvina-Brady Morse function on a finite cover.
Diagonal maps are associated to a consistent direction for edges that are parallel along each ``hyperplane''
There are other possible Morse functions, but they are less natural to investigate in terms of $\Gamma$.
\end{rem}

\section{Examples where the method succeeds}\label{sec:succeeds}

\subsection{Some favorite small examples}\label{sub:small examples}
\subsubsection{Cube}\label{ex:cube}
Let $\Gamma$ be a $1$-skeleton of the $3$-cube. This is a bipartite graph, with $\girth(\Gamma) =4$ and $\kappa(\Gamma) = 1-\frac{8}{2}+\frac{12}{4}=0$. The system of moves corresponding to the bipartition is legal:
\[
\bwstyle\begin{tikzpicture}[scale=0.75]
\draw
{
	(0,0,0) to (0,1,0) to (1,1,0) to (1,0,0) to (0,0,0) to (0,0,1) to (1,0,1) to (1,1,1) to (0,1,1) to (0,0,1)
	(0,1,0) to (0,1,1)
	(1,1,0) to (1,1,1)
	(1,0,0) to (1,0,1)
};
\draw[black, fill = white,minimum width=4pt]
{
	(0,0,0) node {}
	(1,1,0) node {}
	(0,1,1) node {}
	(1,0,1) node {}
};
\draw[black,minimum width=2pt]
{
	(0,1,0) node {}
	(1,0,0) node {}
	(0,0,1) node {}
	(1,1,1) node {}
};
\end{tikzpicture}
\hspace{7pt}
\begin{tikzpicture}[scale=0.75]
\draw
{
	(0,0,0) to (0,1,0) to (1,1,0) to (1,0,0) to (0,0,0) to (0,0,1) to (1,0,1) to (1,1,1) to (0,1,1) to (0,0,1)
	(0,1,0) to (0,1,1)
	(1,1,0) to (1,1,1)
	(1,0,0) to (1,0,1)
};
\draw[black, fill = white,minimum width=4pt]
{
	(0,1,0) node {}
	(1,0,0) node {}
	(0,0,1) node {}
	(1,1,1) node {}
};
\draw[black,minimum width=2pt]
{
	(0,0,0) node {}
	(1,1,0) node {}
	(0,1,1) node {}
	(1,0,1) node {}
};
\end{tikzpicture}
\]
This is an example of a legal orbit:
\[
\tikzstyle{every node}=[circle, draw, fill,
                        inner sep=0pt, minimum width=3pt]
\begin{tikzpicture}[scale = 0.75]
\draw[WildStrawberry,thick]
{
	(0,0,0) node {} -- (0,1,0) node {} -- (1,1,0) node {} -- (1,1,1) node {}
};
\draw[LimeGreen,thick]
{
	(1,0,0) node {} -- (1,0,1) node {} -- (0,0,1) node {} -- (0, 1,1) node {}
};
\draw[gray]
{
	(0,0,1) -- (0,0,0) -- (1,0,0) -- (1,1,0)
	(0,1,0) -- (0,1,1) -- (1,1,1) -- (1,0,1)
};
\end{tikzpicture}
\hspace{7pt}
\begin{tikzpicture}[scale = 0.75]
\draw[WildStrawberry,thick]
{
	(0,1,0) node {} -- (0,1,1) node {} -- (1,1,1) node {} -- (1,0,1) node {}
};
\draw[LimeGreen,thick]
{
	(0,0,1) node {} -- (0,0,0) node {} -- (1,0,0) node {} -- (1, 1,0) node {}
};
\draw[gray]
{
	(0,0,0) -- (0,1,0) -- (1,1,0) -- (1,1,1)
	(1,0,0) -- (1,0,1) -- (0,0,1) -- (0,1,1)
};
\end{tikzpicture}
\hspace{7pt}
\begin{tikzpicture}[scale = 0.75]
\draw[LimeGreen,thick]
{
	(0,1,0) node {} -- (0,1,1) node {} -- (1,1,1) node {} -- (1,0,1) node {}
};
\draw[WildStrawberry,thick]
{
	(0,0,1) node {} -- (0,0,0) node {} -- (1,0,0) node {} -- (1, 1) node {}
};
\draw[gray]
{
	(0,0,0) -- (0,1,0) -- (1,1,0) -- (1,1,1)
	(1,0,0) -- (1,0,1) -- (0,0,1) -- (0,1,1)
};
\end{tikzpicture}
\hspace{7pt}
\begin{tikzpicture}[scale = 0.75]
\draw[LimeGreen,thick]
{
	(0,0,0) node {} -- (0,1,0) node {} -- (1,1,0) node {} -- (1,1,1) node {}
};
\draw[WildStrawberry,thick]
{
	(1,0,0) node {} -- (1,0,1) node {} -- (0,0,1) node {} -- (0,1,1) node {}
};
\draw[gray]
{
	(0,0,1) -- (0,0,0) -- (1,0,0) -- (1,1,0)
	(0,1,0) -- (0,1,1) -- (1,1,1) -- (1,0,1)
};
\end{tikzpicture}
\]

\subsubsection{Wagner graph}
The Wagner graph $\Gamma$ is the following $3$-valent graph on $8$ vertices:
\[\tikzstyle{every node}=[circle, draw, fill,
                        inner sep=1pt,  ]
\begin{tikzpicture}[scale = 0.75]
\draw[black]
{
	(180-22.5:1) -- (135-22.5:1) -- (90-22.5:1) -- (45-22.5:1) -- (0-22.5:1) -- (315-22.5:1) -- (270-22.5:1) -- (225-22.5:1) -- (180-22.5:1)
	(180-22.5:1) -- (0-22.5:1)
	(90-22.5:1) -- (270-22.5:1)
};
\draw[white, fill = white, minimum width = 4pt, inner sep = 0pt]
{
	(0-22.5:0) node {}
};
\draw[black]
{
	(45-22.5:1) -- (225-22.5:1)
};
\draw[white, fill = white, minimum width = 2pt, inner sep = 0pt]
{
	(0-22.5:0) node {}
};
\draw[black]
{
	(135-22.5:1) -- (315-22.5:1)
};
\draw[black]
{
	(180-22.5:1) node [label=left:$1$] {}
	(135-22.5:1) node [label=left:$2$] {}
	(90-22.5:1) node [label=right:$3$] {}
	(45-22.5:1) node [label=right:$4$] {}
	(0-22.5:1) node [label=right:$5$] {}
	(315-22.5:1) node [label=right:$6$] {}
	(270-22.5:1) node [label=left:$7$] {}
	(225-22.5:1) node [label=left:$8$] {}
};
\end{tikzpicture}\]
The girth of the Wagner graph is $4$ and the curvature
$\kappa(\Gamma) = 1 - \frac{8}{2} + \frac{12}{4} = 0$.
The graph $\Gamma$ is noteworthy for having a legal system but not having any legal colored system.
The following system of moves is legal:
\begin{align*}
\bwstyle
m_1 =m_4 =
\begin{tikzpicture}[baseline = 0, scale = 0.75]
\draw[black]
{
	(180-22.5:1) -- (135-22.5:1) -- (90-22.5:1) -- (45-22.5:1) -- (0-22.5:1) -- (315-22.5:1) -- (270-22.5:1) -- (225-22.5:1) -- (180-22.5:1)
	(180-22.5:1) -- (0-22.5:1)
	(90-22.5:1) -- (270-22.5:1)
};
\draw[white, fill = white, minimum width = 4pt, inner sep = 0pt]
{
	(0-22.5:0) node {}
};
\draw[black]
{
	(45-22.5:1) -- (225-22.5:1)
};
\draw[white, fill = white, minimum width = 2pt, inner sep = 0pt]
{
	(0-22.5:0) node {}
};
\draw[black]
{
	(135-22.5:1) -- (315-22.5:1)
};
\draw[black, fill = white, minimum width = 4pt]
{
	(180-22.5:1) node {}
	(45-22.5:1) node {}
	(315-22.5:1) node {}
	(270-22.5:1) node {}
};
\end{tikzpicture}
&&
\bwstyle
m_5 = m_8 =
\begin{tikzpicture}[baseline=0, scale = 0.75]
\draw[black]
{
	(180-22.5:1) -- (135-22.5:1) -- (90-22.5:1) -- (45-22.5:1) -- (0-22.5:1) -- (315-22.5:1) -- (270-22.5:1) -- (225-22.5:1) -- (180-22.5:1)
	(180-22.5:1) -- (0-22.5:1)
	(90-22.5:1) -- (270-22.5:1)
};
\draw[white, fill = white, minimum width = 4pt, inner sep = 0pt]
{
	(0-22.5:0) node {}
};
\draw[black]
{
	(45-22.5:1) -- (225-22.5:1)
};
\draw[white, fill = white, minimum width = 2pt, inner sep = 0pt]
{
	(0-22.5:0) node {}
};
\draw[black]
{
	(135-22.5:1) -- (315-22.5:1)
};
\draw[black, fill = white, minimum width = 4pt]
{
	(135-22.5:1) node {}
	(90-22.5:1) node {}
	(0-22.5:1) node {}
	(225-22.5:1) node {}
};
\end{tikzpicture}
&&
\bwstyle
m_2 = m_7 =
\begin{tikzpicture}[baseline=0, scale = 0.75]
\draw[black]
{
	(180-22.5:1) -- (135-22.5:1) -- (90-22.5:1) -- (45-22.5:1) -- (0-22.5:1) -- (315-22.5:1) -- (270-22.5:1) -- (225-22.5:1) -- (180-22.5:1)
	(180-22.5:1) -- (0-22.5:1)
	(90-22.5:1) -- (270-22.5:1)
};
\draw[white, fill = white, minimum width = 4pt, inner sep = 0pt]
{
	(0-22.5:0) node {}
};
\draw[black]
{
	(45-22.5:1) -- (225-22.5:1)
};
\draw[white, fill = white, minimum width = 2pt, inner sep = 0pt]
{
	(0-22.5:0) node {}
};
\draw[black]
{
	(135-22.5:1) -- (315-22.5:1)
};
\draw[black, fill = white, minimum width = 4pt]
{
	(135-22.5:1) node {}
	(45-22.5:1) node {}
	(0-22.5:1) node {}
	(270-22.5:1) node {}
};
\end{tikzpicture}
&&
\bwstyle
m_3 = m_6 =
\begin{tikzpicture}[baseline=0, scale = 0.75]
\draw[black]
{
	(180-22.5:1) -- (135-22.5:1) -- (90-22.5:1) -- (45-22.5:1) -- (0-22.5:1) -- (315-22.5:1) -- (270-22.5:1) -- (225-22.5:1) -- (180-22.5:1)
	(180-22.5:1) -- (0-22.5:1)
	(90-22.5:1) -- (270-22.5:1)
};
\draw[white, fill = white, minimum width = 4pt, inner sep = 0pt]
{
	(0-22.5:0) node {}
};
\draw[black]
{
	(45-22.5:1) -- (225-22.5:1)
};
\draw[white, fill = white, minimum width = 2pt, inner sep = 0pt]
{
	(0-22.5:0) node {}
};
\draw[black]
{
	(135-22.5:1) -- (315-22.5:1)
};
\draw[black, fill = white, minimum width = 4pt]
{
	(90-22.5:1) node {}
	(180-22.5:1) node {}
	(225-22.5:1) node {}
	(315-22.5:1) node {}
};
\end{tikzpicture}
\end{align*}
The following figure indicates a legal orbit.
\begin{align*}
\bwstyle
\begin{tikzpicture}[baseline = 0, scale=0.75]
\draw[black]
{
	(135-22.5:1) -- (90-22.5:1) -- (45-22.5:1) -- (0-22.5:1)
	(270-22.5:1) -- (225-22.5:1)
	(180-22.5:1) -- (0-22.5:1)
};
\draw[WildStrawberry, thick, minimum width = 4pt,]
{
	(135-22.5:1) node {} -- (180-22.5:1) node {} -- (225-22.5:1) node {}
	(45-22.5:1) node {}
};
\draw[LimeGreen, thick,  minimum width = 4pt]
{
	(90-22.5:1) node {} -- (270-22.5:1) node {} -- (315-22.5:1) node {} -- (0-22.5:1) node {}
};
\draw[white, fill = white, minimum width = 4pt, inner sep = 0pt]
{
	(0-22.5:0) node {}
};
\draw[WildStrawberry]
{
	(45-22.5:1) -- (225-22.5:1)
};
\draw[white, fill = white, minimum width = 2pt, inner sep = 0pt]
{
	(0-22.5:0) node {}
};
\draw[black]
{
	(135-22.5:1) -- (315-22.5:1)
};
\end{tikzpicture}
&&
\bwstyle
\begin{tikzpicture}[baseline = 0, scale=0.75]
\draw[black]
{
	(180-22.5:1) -- 	(135-22.5:1) -- (90-22.5:1)
	(0-22.5:1) -- (315-22.5:1)
	(225-22.5:1) -- (180-22.5:1)
	(90-22.5:1) -- (270-22.5:1)
};
\draw[WildStrawberry, thick, minimum width = 4pt,]
{
	(225-22.5:1) node {} -- (270-22.5:1) node {} -- (315-22.5:1) node {}
	(135-22.5:1) node {}
};
\draw[LimeGreen, thick,  minimum width = 4pt]
{
	(180-22.5:1) node {} -- (0-22.5:1) node {} -- (45-22.5:1) node {} -- (90-22.5:1) node {}
};
\draw[white, fill = white, minimum width = 4pt, inner sep = 0pt]
{
	(0-22.5:0) node {}
};
\draw[black]
{
	(45-22.5:1) -- (225-22.5:1)
};
\draw[white, fill = white, minimum width = 2pt, inner sep = 0pt]
{
	(0-22.5:0) node {}
};
\draw[WildStrawberry]
{
	(135-22.5:1) -- (315-22.5:1)
};
\end{tikzpicture}
&&
\bwstyle
\begin{tikzpicture}[baseline = 0, scale=0.75]
\draw[black]
{
	(180-22.5:1) -- 	(135-22.5:1) -- (90-22.5:1)
	(0-22.5:1) -- (315-22.5:1)
	(225-22.5:1) -- (180-22.5:1)
	(90-22.5:1) -- (270-22.5:1)
};
\draw[LimeGreen, thick, minimum width = 4pt,]
{
	(225-22.5:1) node {} -- (270-22.5:1) node {} -- (315-22.5:1) node {}
	(135-22.5:1) node {}
};
\draw[WildStrawberry, thick,  minimum width = 4pt]
{
	(180-22.5:1) node {} -- (0-22.5:1) node {} -- (45-22.5:1) node {} -- (90-22.5:1) node {}
};
\draw[white, fill = white, minimum width = 4pt, inner sep = 0pt]
{
	(0-22.5:0) node {}
};
\draw[black]
{
	(45-22.5:1) -- (225-22.5:1)
};
\draw[white, fill = white, minimum width = 2pt, inner sep = 0pt]
{
	(0-22.5:0) node {}
};
\draw[LimeGreen]
{
	(135-22.5:1) -- (315-22.5:1)
};
\end{tikzpicture}
&&
\bwstyle
\begin{tikzpicture}[baseline = 0, scale=0.75]
\draw[black]
{
	(180-22.5:1) -- 	(135-22.5:1)
	(45-22.5:1) --
	(0-22.5:1) -- (315-22.5:1) --
	(270-22.5:1)	
	(90-22.5:1) -- (270-22.5:1)
};
\draw[WildStrawberry, thick, minimum width = 4pt,]
{
	(0-22.5:1) node {} -- (180-22.5:1) node {} -- (225-22.5:1) node {} -- (270-22.5:1) node {}
};
\draw[LimeGreen, thick,  minimum width = 4pt]
{
	(45-22.5:1) node {} -- (90-22.5:1) node {}  -- (135-22.5:1) node {}
	(315-22.5:1) node {}
};
\draw[white, fill = white, minimum width = 4pt, inner sep = 0pt]
{
	(0-22.5:0) node {}
};
\draw[black]
{
	(45-22.5:1) -- (225-22.5:1)
};
\draw[white, fill = white, minimum width = 2pt, inner sep = 0pt]
{
	(0-22.5:0) node {}
};
\draw[LimeGreen]
{
	(135-22.5:1) -- (315-22.5:1)
};
\end{tikzpicture}
&&
\bwstyle
\begin{tikzpicture}[baseline = 0, scale=0.75]
\draw[black]
{
	(180-22.5:1) -- 	(135-22.5:1)
	(45-22.5:1) --
	(0-22.5:1) -- (315-22.5:1) --
	(270-22.5:1)
	(90-22.5:1) -- (270-22.5:1)
};
\draw[LimeGreen, thick, minimum width = 4pt,]
{
	(0-22.5:1) node {} -- (180-22.5:1) node {} -- (225-22.5:1) node {} -- (270-22.5:1) node {}
};
\draw[WildStrawberry, thick,  minimum width = 4pt]
{
	(45-22.5:1) node {} -- (90-22.5:1) node {}  -- (135-22.5:1) node {}
	(315-22.5:1) node {}
};
\draw[white, fill = white, minimum width = 4pt, inner sep = 0pt]
{
	(0-22.5:0) node {}
};
\draw[black]
{
	(45-22.5:1) -- (225-22.5:1)
};
\draw[white, fill = white, minimum width = 2pt, inner sep = 0pt]
{
	(0-22.5:0) node {}
};
\draw[WildStrawberry]
{
	(135-22.5:1) -- (315-22.5:1)
};
\end{tikzpicture}
&&
\bwstyle
\begin{tikzpicture}[baseline = 0, scale=0.75]
\draw[black]
{
	(135-22.5:1) -- (90-22.5:1) -- (45-22.5:1) -- (0-22.5:1)
	(270-22.5:1) -- (225-22.5:1)
	(180-22.5:1) -- (0-22.5:1)
};
\draw[LimeGreen, thick, minimum width = 4pt,]
{
	(135-22.5:1) node {} -- (180-22.5:1) node {} -- (225-22.5:1) node {}
	(45-22.5:1) node {}
};
\draw[WildStrawberry, thick,  minimum width = 4pt]
{
	(90-22.5:1) node {} -- (270-22.5:1) node {} -- (315-22.5:1) node {} -- (0-22.5:1) node {}
};
\draw[white, fill = white, minimum width = 4pt, inner sep = 0pt]
{
	(0-22.5:0) node {}
};
\draw[LimeGreen]
{
	(45-22.5:1) -- (225-22.5:1)
};
\draw[white, fill = white, minimum width = 2pt, inner sep = 0pt]
{
	(0-22.5:0) node {}
};
\draw[black]
{
	(135-22.5:1) -- (315-22.5:1)
};
\end{tikzpicture}
&&
\bwstyle
\begin{tikzpicture}[baseline = 0, scale=0.75]
\draw[black]
{
	(90-22.5:1) -- (45-22.5:1)
	(315-22.5:1) --
	(270-22.5:1) -- (225-22.5:1)
	 -- (180-22.5:1)
	(180-22.5:1) -- (0-22.5:1)
};
\draw[WildStrawberry, thick, minimum width = 4pt,]
{
	(315-22.5:1) node {} -- (0-22.5:1) node {} -- (45-22.5:1) node {}
	(225-22.5:1) node {}
};
\draw[LimeGreen, thick,  minimum width = 4pt]
{
	(270-22.5:1) node {} -- (90-22.5:1) node {} -- (135-22.5:1) node {} -- (180-22.5:1) node {}
};
\draw[white, fill = white, minimum width = 4pt, inner sep = 0pt]
{
	(0-22.5:0) node {}
};
\draw[WildStrawberry]
{
	(45-22.5:1) -- (225-22.5:1)
};
\draw[white, fill = white, minimum width = 2pt, inner sep = 0pt]
{
	(0-22.5:0) node {}
};
\draw[black]
{
	(135-22.5:1) -- (315-22.5:1)
};
\end{tikzpicture}
&&
\bwstyle
\begin{tikzpicture}[baseline = 0, scale=0.75]
\draw[black]
{
	(90-22.5:1) -- (45-22.5:1)	(315-22.5:1) --
	(270-22.5:1) -- (225-22.5:1)
	 -- (180-22.5:1)
	(180-22.5:1) -- (0-22.5:1)
};
\draw[LimeGreen, thick, minimum width = 4pt,]
{
	(315-22.5:1) node {} -- (0-22.5:1) node {} -- (45-22.5:1) node {}
	(225-22.5:1) node {}
};
\draw[WildStrawberry, thick,  minimum width = 4pt]
{
	(270-22.5:1) node {} -- (90-22.5:1) node {} -- (135-22.5:1) node {} -- (180-22.5:1) node {}
};
\draw[white, fill = white, minimum width = 4pt, inner sep = 0pt]
{
	(0-22.5:0) node {}
};
\draw[LimeGreen]
{
	(45-22.5:1) -- (225-22.5:1)
};
\draw[white, fill = white, minimum width = 2pt, inner sep = 0pt]
{
	(0-22.5:0) node {}
};
\draw[black]
{
	(135-22.5:1) -- (315-22.5:1)
};
\end{tikzpicture}
\end{align*}
\begin{com} Possibly unify conventions for generators and states in pictures...
What do others think?
\end{com}

\subsection{Some high density bipartite examples}
Consider a bipartite graph $\Gamma(V,E)$ with bipartite structure $V=A\sqcup B$.
In many cases where there are sufficiently many edges,
there exists a legal system all of whose moves are either $A$ or $B$.
For instance the following easy criterion often applies in high density situations.
For instance, it applies to the graph $\Gamma$ in Figure~\ref{fig:TBWS}.

\begin{figure}
\begin{tikzpicture}[scale = 0.5]
\tikzstyle{every node}=[circle, draw, fill,
                        inner sep=0pt, minimum width = 3pt]
\foreach \x in {1,...,6}
{
\coordinate (a\x) at (-3,-\x);
\coordinate (b\x) at (3,-\x);
\node at (a\x) {};
\node at (b\x) {};
}
\draw (a1) -- (b1) -- (a2) -- (b2) -- (a3);
\draw (b2) -- (a3);
\draw (a1) -- (b5) -- (a2) -- (b6) -- (a3) -- (b4) -- (a4) -- (b5) -- (a5) -- (b6) -- (a6) -- (b3) -- (a5) -- (b2) -- (a4) -- (b1);
\draw (a2) -- (b3);
\draw [decoration={brace,amplitude=8pt},decorate] ($(b1)+(0.2,0.2)$) -- ($(b3)+(0.2,-0.2)$);
\draw [decoration={brace,amplitude=8pt},decorate] ($(b4)+(0.2,0.2)$) -- ($(b6)+(0.2,-0.2)$);
\draw [decoration={brace,amplitude=8pt,mirror},decorate] ($(a1)+(-0.2,0.2)$) -- ($(a3)+(-0.2,-0.2)$);
\draw [decoration={brace,amplitude=8pt,mirror},decorate] ($(a4)+(-0.2,0.2)$) -- ($(a6)+(-0.2,-0.2)$);
\tikzstyle{every node}=[]
\node at (-4.3,-2) {$A_1$};
\node at (-4.3,-5) {$A_2$};
\node at (4.3,-2) {$B_1$};
\node at (4.3,-5) {$B_2$};
\end{tikzpicture}
  \caption{\label{fig:TBWS}A bipartite graph $\Gamma$ with a legal system illustrating
  Lemma~\ref{lem:TBWS}.}
\end{figure}
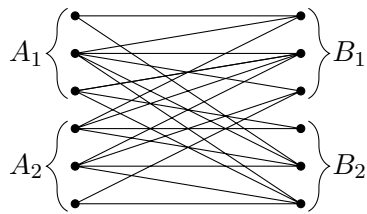

\begin{lem}\label{lem:TBWS}
Consider a bipartite graph $\Gamma$ whose vertex set has the bipartite structure  $A\sqcup B$
where $A = A_1\sqcup A_2$ and $B = B_1\sqcup B_2$.
If the following holds then each $A_i\sqcup B_j$ is a legal state for each $i,j$ and so $\{A,B\}$ is a legal system.
 \begin{enumerate}
\item\label{TBWS:1} For each $i,j$, and for each pair of vertices $a,a'\in A_i$, there is a path from $a$ to $a'$ in the subgraph induced by $A_i\sqcup B_j$.
\item\label{TBWS:2} For each $i,j$ each element of $B_j$ is adjacent to an element of $A_i$.
 \end{enumerate}
 \end{lem}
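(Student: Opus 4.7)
The plan is a direct verification: first exhibit the $M$-orbit of $A_i\sqcup B_j$ explicitly, and then check each of its (at most four) members is a legal state using hypotheses~\eqref{TBWS:1} and~\eqref{TBWS:2}.

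To begin, I would confirm that assigning $m_v = A$ for $v\in A$ and $m_v = B$ for $v\in B$ defines a valid system of moves in the sense of Section~\ref{sec:legal systems}: bipartiteness makes Move Property~\eqref{move prop:2} automatic, since every neighbor of $v\in A$ lies in $B$ and is thus disjoint from $A = m_v$ (and symmetrically for $v\in B$). Hence $M=\langle A,B\rangle$ is a well-defined subgroup of $\integers_2^V$. Next I would compute the $M$-orbit of $S:=A_i\sqcup B_j$: symmetric difference with $A$ swaps $A_i\leftrightarrow A_{i'}$ (where $\{i,i'\}=\{1,2\}$) while fixing $B_j$, and symmetric difference with $B$ swaps $B_j\leftrightarrow B_{j'}$ while fixing $A_i$. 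So the orbit is exactly $\{A_k\sqcup B_\ell : k,\ell\in\{1,2\}\}$, and it suffices to show that each such state is legal.

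Fix $k,\ell$ and set $T:=A_k\sqcup B_\ell$. Condition~\eqref{TBWS:1} furnishes, for any two vertices of $A_k$, a path joining them inside the subgraph induced by $T$. Condition~\eqref{TBWS:2} places every vertex $b\in B_\ell$ in the same component as $A_k$, via a neighbor of $b$ in $A_k$. Hence the subgraph induced by $T$ is connected and nonempty (the nonemptiness of each $A_k,B_\ell$ being implicit in the setup; indeed~\eqref{TBWS:2} already forces $A_k$ nonempty whenever $B_\ell$ is). The complement $V\setminus T = A_{k'}\sqcup B_{\ell'}$ has exactly the same form with swapped indices, and since the hypotheses are assumed for \emph{all} $i,j$, the same argument applies verbatim to establish its connectedness.

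I do not anticipate any genuine obstacle: hypotheses~\eqref{TBWS:1} and~\eqref{TBWS:2} are tailored precisely to force connectedness of the four sets $A_k\sqcup B_\ell$, and the orbit computation is purely formal because symmetric difference with $A$ or $B$ preserves the shape $(\text{part of }A)\sqcup(\text{part of }B)$. The only point requiring mild attention is verifying that the argument goes through symmetrically for the complement; but this is immediate from the symmetry of the hypotheses in the indices $i,j$. With both halves connected and nonempty for every member of the orbit, $\{A,B\}$ is a legal system with legal orbit $\{A_k\sqcup B_\ell\}$, and in particular each $A_i\sqcup B_j$ is a legal state.
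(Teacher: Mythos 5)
Your proposal is correct and follows essentially the same route as the paper: identify the $M$-orbit of $A_i\sqcup B_j$ as $\{A_k\sqcup B_\ell : k,\ell\in\{1,2\}\}$, then use~\eqref{TBWS:1} to connect $A_k$ and~\eqref{TBWS:2} to attach each vertex of $B_\ell$ to that component. The paper's proof is just a terser version of yours; your extra observations (validity of the moves via bipartiteness, and that the complement of each state is itself a state of the same form) are points the paper leaves implicit.
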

\begin{proof}
The orbit of $A_1 \cup B_1$ is
$\{A_i\cup B_j  \ : \  i,j \in \{1,2\} \ \}$.
For each state $A_i\cup B_j$, the vertices of $A_i$ lie in a single component by~\eqref{TBWS:1}
and each vertex of $B_j$ is joined to this component by~\eqref{TBWS:2}.
\end{proof}

 \begin{prob}\label{prob:4connected wins or subwins}
Is there a 4-connected finite graph $\Gamma$ with $\girth(\Gamma)\geq 4$ and $\curvature(\Gamma)\geq 0$ but no 4-connected (or even 3-connected) subgraph $\Gamma'$ with a legal system?
 \end{prob}
Figure~\ref{fig:Tutte} depicts  Tutte's graph which is 3-connected but contains no 3-connected subgraph with a legal system.
We are unable to make more than a vague connection to $n$-connectivity here but refer to Remark~\ref{rem:connectivity} and Theorem~\ref{thm:DWtoHam} and Conjecture~\ref{conj:reformed barnette}.

 \subsection{$24$-cell}
 The $24$-cell is one of six convex regular 4-polytopes. Its boundary is composed of $24$ octahedra with $6$ meeting at each vertex, and $3$ at each edge. It can be realized as a right-angled ideal hyperbolic polytope. Let $G$ be the Coxeter group of reflections in the 3-dimensional faces of the $24$-cell. Since the $24$-cell is self-dual $G = G(\Gamma)$ where $\Gamma$ is the $1$-skeleton of the $24$-cell.
 \subsubsection{The $1$-skeleton of the $24$-cell}
The graph $\Gamma$ is obtained from the $1$-skeleton of the $4$-cube as follows: for each of the $3$-cubes in the $4$-cube we add an \emph{extra} vertex and join it to all vertices of its ``surrounding'' $3$-cube.
    See the left graph in Figure~\ref{fig:24graph}.
The resulting graph has $24$ vertices: $16$ of the $4$-cube and $8$ extra vertices; and has $96$ edges: $32$ in the $4$-cube and $8$ edges for each extra vertex.
\subsubsection{Legal system in the $24$-cell}
 Consider the system of moves for $\Gamma$ corresponding to the $3$-coloring of $\Gamma$ where the $4$-cube is colored using a bipartite structure and
  all extra vertices are given a third color. Note that extra vertices are pairwise at distance $\geq 2$.

The state $S_b$ of the $4$-cube graph, presented on the right in Figure~\ref{fig:24graph}, has the property that its restriction to any $3$-cube is a legal state of the $3$-cube as in Example~\ref{ex:cube}. All states obtained from $S_b$ by the moves corresponding to the bipartite structure of the $4$-cube graph also have this property and remain legal. Thus the bipartition of the $4$-cube graph is a legal system (A similar argument works for an $n$-cube). We extend $S_b$ to a state $S_o$ of $\Gamma$. Since each extra vertex $v$ in $\Gamma$ is adjacent to all vertices of a $3$-cube, for each state $S$ in the orbit of $S_b$, the vertex $v$ is joined to both a vertex in $S$ and a vertex in $V-S$. Let $S_o$ be the union of $S_b$ and any subset of the set of extra vertices. Then $S_o$ provides a state for $\Gamma$ whose orbit is legal.
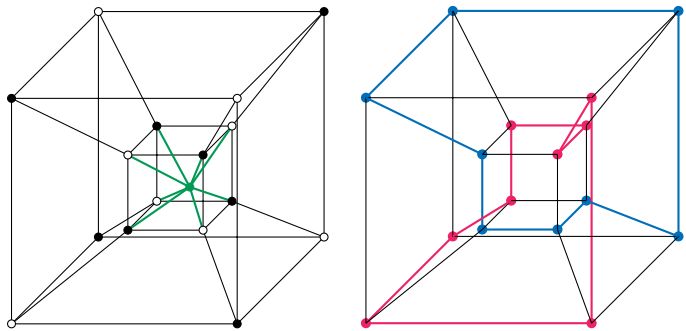
\begin{figure}
\begin{tikzpicture}[scale = 1]
\tikzstyle{every node}=[circle, draw, fill,
                        inner sep=1pt, minimum width = 3pt]
\coordinate (a) at (0,0,0);
\coordinate (b) at (0,0,1);
\coordinate (c) at (0,1,1);
\coordinate (d) at (0,1,0);
\coordinate (e) at (1,1,0);
\coordinate (f) at (1,0,0);
\coordinate (g) at (1,0,1);
\coordinate (h) at (1,1,1);

\coordinate (a') at (0,0,0);
\coordinate (b') at (0,0,3);
\coordinate (c') at (0,3,3);
\coordinate (d') at (0,3,0);
\coordinate (e') at (3,3,0);
\coordinate (f') at (3,0,0);
\coordinate (g') at (3,0,3);
\coordinate (h') at (3,3,3);

\coordinate (j) at (.65,.4,.55);

\node[draw,ForestGreen] at (j) {};
\draw
{
	(f) to (a) to (b) to (c) to (d) to (e) to (f) to (g) to (h) to (c)
	(a) to(d)
	(h) to (e)
	(b) to (g)
};

\foreach \x in {a,b,c,d,e,f,g,h}
{
	\draw[ForestGreen, thick] (\x) to (j);
}
\begin{scope}[every coordinate/.style={shift = {(-1.2,-0.9,-1.1)}}]
\draw[shift={(-1,-1,-1)}]
{
([c]f') to ([c]a') to ([c]b') to ([c]c') to ([c]d') to ([c]e') to ([c]f') to ([c]g') to ([c]h') to ([c]c')
	([c]a') to([c]d')
	([c]h') to ([c]e')
	([c]b') to ([c]g')
};
\foreach \x in {a,c,e,g}
{
	\draw (\x) to ([c]\x') node {};
}
\foreach \x in {b,d,f,h}
{
	\draw (\x) to ([c]\x') node[fill = white] {};
}
\end{scope}
\foreach \x in {a,c,e,g}
{
	\node[draw, fill = white] at (\x) {};
}
\foreach \x in {b,d,f,h}
{
	\node[draw] at (\x) {};
}
\end{tikzpicture}
\hspace{5pt}
\begin{tikzpicture}[scale = 1]
\tikzstyle{every node}=[circle, draw, fill,
                        inner sep=1pt, minimum width = 3pt]

\coordinate (a) at (0,0,0);
\coordinate (b) at (0,0,1);
\coordinate (c) at (0,1,1);
\coordinate (d) at (0,1,0);
\coordinate (e) at (1,1,0);
\coordinate (f) at (1,0,0);
\coordinate (g) at (1,0,1);
\coordinate (h) at (1,1,1);

\coordinate (a') at (0,0,0);
\coordinate (b') at (0,0,3);
\coordinate (c') at (0,3,3);
\coordinate (d') at (0,3,0);
\coordinate (e') at (3,3,0);
\coordinate (f') at (3,0,0);
\coordinate (g') at (3,0,3);
\coordinate (h') at (3,3,3);

\coordinate (j) at (.55,.4,.55);

\foreach \x in {a,b,c,d,e,f,g,h}
{
	\node[draw] at (\x) {};
}
\draw[WildStrawberry, thick]
{
	(h) node {} to (e) node {} to (d) node {} to (a) node {}
};
\draw[NavyBlue, thick]
{
	(c) node {} to (b) node {} to (g) node {} to (f) node {}
};
\draw
{
	(b) to (a) to (f) to (e)
	(d) to (c) to (h) to (g)
};
\begin{scope}[every coordinate/.style={shift = {(-1.2,-0.9,-1.1)}}]
\draw[WildStrawberry, thick]
{
	([c]a') node {} to ([c]b') node {} to ([c]g') node {} to ([c]h') node {}
};
\draw[NavyBlue, thick]
{
	([c]c') node {} to ([c]d') node {} to ([c]e') node {} to ([c]f') node {}
};
\draw
{
	([c]d') to ([c]a') to ([c]f') to ([c]g')
	([c]b') to ([c]c') to ([c]h') to ([c]e')
};
\foreach \x in {h,a}
{
	\draw[WildStrawberry,thick] (\x) to ([c]\x');
}
\foreach \x in {c,f}
{
	\draw[NavyBlue,thick] (\x) to ([c]\x');
}
\foreach \x in {b,d,e,g}
{
	\draw (\x) to ([c]\x');
}
\end{scope}
\end{tikzpicture}
\caption{The $1$-skeleton of the $24$-cell has $8$ green vertices corresponding to $3$-dimensional faces of the $4$-cube. We illustrate only one such vertex here. On the right there is a legal state for the $1$-skeleton of the $4$-cube. All elements of its orbit are legal where the system of moves correspond to bipartition of the $1$-skeleton of the $4$-cube.}
\label{fig:24graph}
\end{figure}

 \subsection{Icosahedron}\label{sub:icosahedron}
Consider the $6$-coloring of the icosahedron on the left in Figure~\ref{fig:icosahedron}.

For each vertex $v\in V$ there exists exactly one color \emph{k} such that no neighbor of $v$ has it. In particular, the set of elements in color $k$ or in the color of $v$ is a move at $v$, e.g. the pink and purple vertices form a move at the top pink vertex.
We will show that the system $M$ of moves of this form is legal. This system generates an index $2$ subgroup of a group $M'$ generated by all single color sets. Consider the collection $\mathcal S$ of states having exactly one vertex of each color. The collection $\mathcal S$ is a single orbit under $M'$-action. Not all such states are legal, but all nonlegal state of such form lie in a single orbit under $M$-action.
To see that note that for any nonlegal $S\in\mathcal S$ the graph induced by either $S$ or $V-S$ consists of two disjoint triangles as in the picture below. Indeed, the connected component of this graph cannot be a single vertex or a pair of vertices connected by an edge since every vertex and every such pair have two neighbors in the same color. Any two illegal states differ on an even number of colors.

\begin{figure}
\tikzstyle{every node}=[circle, draw, fill,
                        inner sep=2pt, minimum width = 4pt]
 \begin{tikzpicture}[scale=1]
\draw
{
	(90:.5) to (210:.5) to (330:.5) to (90:.5)
	
	(30:1) to (90:.5) to (150:1) to (210:.5) to (270:1) to (330:.5) to (30:1)
	
	(90:1.5) to (30:1) to (330:1.5) to (270:1) to (210:1.5) to (150:1) to (90:1.5) to (90:.5)
	(330:1.5) to (330:.5)
	(210:1.5) to (210:.5)
	
	(30:2) to (90:1.5) to (150:2) to (210:1.5) to (270:2) to (330:1.5) to (30:2) to (30:1)
	(150:2) to (150:1)
	(270:2) to (270:1)
	
	(30:2) to[out=120 , in=60] (150:2)
	(150:2) to[out=240, in=180] (270:2)
	(270:2) to[out=0,in=300] (30:2)
};

\draw[WildStrawberry,dotted, thick]
{
	(90:.5) node {} to (270:1) node {}	
};
\draw[MidnightBlue, dotted, thick]
{
	(330:.5) node {} to[out = 30, in = 240] (30:2) node {}
};
\draw[PineGreen,dotted, thick]
{
	(210:.5) node {} to[out = 150, in = 300] (150:2) node {}
};
\draw[Melon,dotted, thick]
{
	(30:1) node {} to[out = 150, in = 30] (150:1) node {}
};
\draw[Purple, dotted, thick]
{
	(210:1.5) node {} to[out = 320, in =220] (330:1.5) node {}
};
\draw[LimeGreen,dotted, thick]
{
	(270:2) node {} to (270:2.5)
	(90:1.5) node {} to (90:2.2)
};
\end{tikzpicture}
 \begin{tikzpicture}[scale=1]
\draw
{
	(90:.5) to (210:.5) to (330:.5) to (90:.5)
	
	(30:1) to (90:.5) to (150:1) to (210:.5) to (270:1) to (330:.5) to (30:1)
	
	(90:1.5) to (30:1) to (330:1.5) to (270:1) to (210:1.5) to (150:1) to (90:1.5) to (90:.5)
	(330:1.5) to (330:.5)
	(210:1.5) to (210:.5)
	
	(30:2) to (90:1.5) to (150:2) to (210:1.5) to (270:2) to (330:1.5) to (30:2) to (30:1)
	(150:2) to (150:1)
	(270:2) to (270:1)
	
	(30:2) to[out=120 , in=60] (150:2)
	(150:2) to[out=240, in=180] (270:2)
	(270:2) to[out=0,in=300] (30:2)
};

\draw[WildStrawberry,dotted, thick]
{
	(90:.5) node {} to (270:1) node {}	
};
\draw[MidnightBlue, dotted, thick]
{
	(330:.5) node {} to[out = 30, in = 240] (30:2) node {}
};
\draw[PineGreen,dotted, thick]
{
	(210:.5) node {} to[out = 150, in = 300] (150:2) node {}
};
\draw[Melon,dotted, thick]
{
	(30:1) node {} to[out = 150, in = 30] (150:1) node {}
};
\draw[Purple, dotted, thick]
{
	(210:1.5) node {} to[out = 320, in =220] (330:1.5) node {}
};
\draw[LimeGreen,dotted, thick]
{
	(270:2) node {} to (270:2.5)
	(90:1.5) node {} to (90:2.2)
};
\draw[black]
{
	(90:.5) circle (5pt)
	(330:.5) circle (5pt)
	(30:1) circle (5pt)
	(210:1.5) circle (5pt)
	(210:.5) circle (5pt)
	(270:2) circle (5pt)
};
\end{tikzpicture}
 \begin{tikzpicture}[scale=1]
\draw
{
	(90:.5) to (210:.5) to (330:.5) to (90:.5)
	
	(30:1) to (90:.5) to (150:1) to (210:.5) to (270:1) to (330:.5) to (30:1)
	
	(90:1.5) to (30:1) to (330:1.5) to (270:1) to (210:1.5) to (150:1) to (90:1.5) to (90:.5)
	(330:1.5) to (330:.5)
	(210:1.5) to (210:.5)
	
	(30:2) to (90:1.5) to (150:2) to (210:1.5) to (270:2) to (330:1.5) to (30:2) to (30:1)
	(150:2) to (150:1)
	(270:2) to (270:1)
	
	(30:2) to[out=120 , in=60] (150:2)
	(150:2) to[out=240, in=180] (270:2)
	(270:2) to[out=0,in=300] (30:2)
};

\draw[WildStrawberry,dotted, thick]
{
	(90:.5) node {} to (270:1) node {}	
};
\draw[MidnightBlue, dotted, thick]
{
	(330:.5) node {} to[out = 30, in = 240] (30:2) node {}
};
\draw[PineGreen,dotted, thick]
{
	(210:.5) node {} to[out = 150, in = 300] (150:2) node {}
};
\draw[Melon,dotted, thick]
{
	(30:1) node {} to[out = 150, in = 30] (150:1) node {}
};
\draw[Purple, dotted, thick]
{
	(210:1.5) node {} to[out = 320, in =220] (330:1.5) node {}
};
\draw[LimeGreen,dotted, thick]
{
	(270:2) node {} to (270:2.5)
	(90:1.5) node {} to (90:2.2)
};
\draw[black]
{
	(90:.5) circle (5pt)
	(330:.5) circle (5pt)
	(30:1) circle (5pt)
	(210:1.5) circle (5pt)
	(150:2) circle (5pt)
	(270:2) circle (5pt)
};
\end{tikzpicture}
\caption{The move at $v$ consists of the two vertices with the same color as $v$
together with the two vertices with the color not adjacent to $v$.
Let $S_o$ be a state consisting of exactly one vertex of each color. There are two orbits of such states.
One is a legal system and is illustrated in the middle. On the right there is a non-legal state. We use a different convention here than in previous examples: vertices of the same color correspond to moves, and circled vertices form a state.}\
\label{fig:icosahedron}
\end{figure}
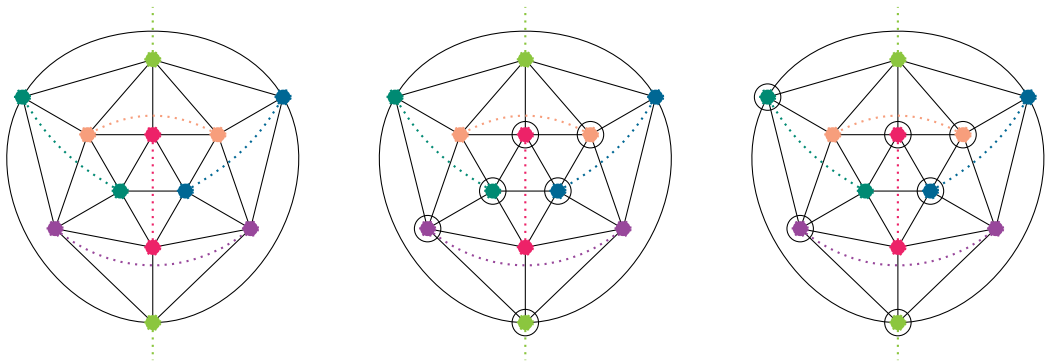

\subsection{$600$-cell}
The $120$-cell is a notable convex regular $4$-polytope that admits an embedding in $\hyperbolic^4$ as  a right-angled polytope. Identifying opposite $3$-dimensional faces of the $120$-cell gives a compact hyperbolic $4$-manifold \cite{Davis85}. The dual of the $120$-cell is the $600$-cell whose boundary is a flag complex that is built  of $600$ tetrahedra and is characterized by the property that the link of each vertex is an icosahedron. The reflection group associated to the $120$-cell generated by reflections along its $3$-dimensional faces is the Coxeter group whose defining graph is the $1$-skeleton of the $600$-cell.
We refer to \cite{wiki:600cell} for a description of the 600-cell, and note that the construction below was motivated by the discussion there.

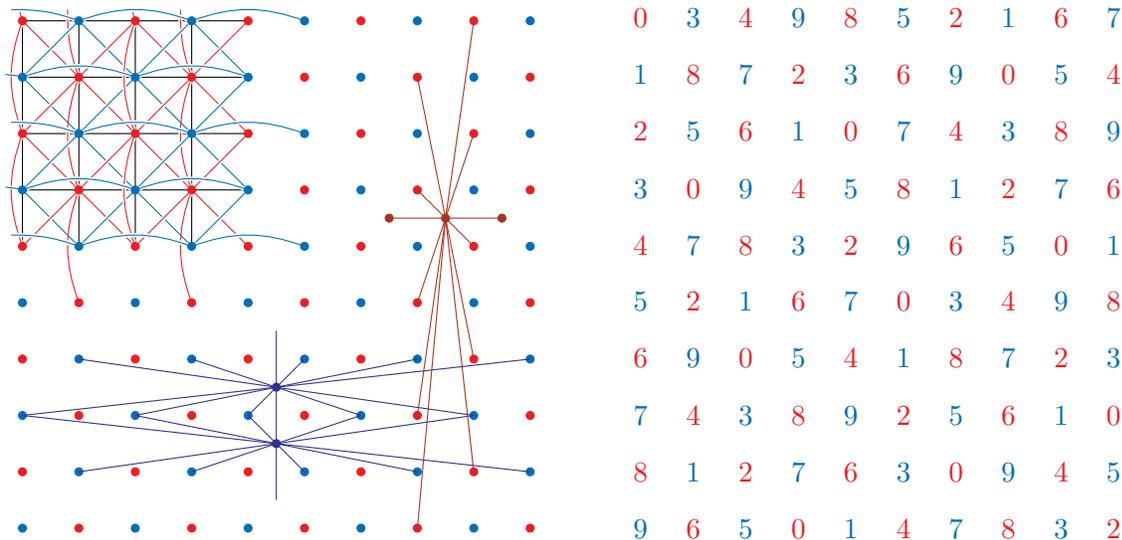
\begin{figure}
\[
\begin{tikzpicture}[scale= 0.75, baseline = -11]
\tikzstyle{every node}=[circle, draw, fill,
                        inner sep=1pt, minimum width = 3pt]

\foreach \s in {0,1,2,3}
{
	\foreach \t in {5,6,7,8}
	{
		\draw (\s,\t) to (\s,\t+1);
		\draw (\s,\t+1) to (\s+1,\t+1);
	}
}
\foreach \s in {1,3}
{
	\draw[Red] (\s,6) to (\s+1,5);
	\draw[Red] (\s,8) to (\s+1,7);
	\draw[white, ultra thick] (\s,5) to (\s+1,6);
	\draw[NavyBlue] (\s,5) to (\s+1,6);
	\draw[white, ultra thick] (\s,7) to (\s+1,8);
	\draw[NavyBlue] (\s,7) to (\s+1,8);
	\draw[NavyBlue] (\s,7) to (\s+1,6);
	\draw[NavyBlue] (\s,9) to (\s+1,8);
	\draw[white, ultra thick] (\s,6) to (\s+1,7);
	\draw[Red] (\s,6) to (\s+1,7);
	\draw[white, ultra thick] (\s,8) to (\s+1,9);
	\draw[Red] (\s,8) to (\s+1,9);
	\draw[white, ultra thick] (\s,4) to[out=110, in = 250] (\s,6) to[out=110, in = 250] (\s,8)  to[out=110, in = 270] (\s-0.2,9.2);
	\draw[Red] (\s,4) to[out=110, in = 250] (\s,6) to[out=110, in = 250] (\s,8)  to[out=110, in = 270] (\s-0.2,9.2);
}
\foreach \s in {0,2}
{
	\draw[Red] (\s,7) to (\s+1,6);
	\draw[Red] (\s,9) to (\s+1,8);
	\draw[white, ultra thick] (\s,6) to (\s+1,7);
	\draw[NavyBlue] (\s,6) to (\s+1,7);
	\draw[white, ultra thick] (\s,8) to (\s+1,9);
	\draw[NavyBlue] (\s,8) to (\s+1,9);
	\draw[NavyBlue] (\s,6) to (\s+1,5);
	\draw[NavyBlue] (\s,8) to (\s+1,7);
	\draw[white, ultra thick] (\s,5) to (\s+1,6);
	\draw[Red] (\s,5) to (\s+1,6);
	\draw[white, ultra thick] (\s,7) to (\s+1,8);
	\draw[Red] (\s,7) to (\s+1,8);
	\draw[white, ultra thick] (\s,5) to[out=110, in = 250] (\s,7)  to[out=110, in = 250] (\s,9);
	\draw[Red] (\s,5) to[out=110, in = 250] (\s,7)  to[out=110, in = 250] (\s,9) to [out=110, in=280] (\s-.05,9.2);

}
\foreach \t in {5,7,9}
{
	\draw[white, ultra thick] (5,\t) to[out=160,in=20] (3,\t)  to[out=160,in=20] (1,\t) to[out=160,in =0] (-.2,\t+.2);
	\draw[NavyBlue] (5,\t) to[out=160,in=20] (3,\t)  to[out=160,in=20] (1,\t) to[out=160,in =0] (-.2,\t+.2);
}
\foreach \t in {6,8}
{
	\draw[white, ultra thick] (4,\t) to[out=160,in=20] (2,\t)  to[out=160,in=20] (0,\t) to[out = 160, in = -10] (-.2,\t+0.05);
	\draw[NavyBlue] (4,\t) to[out=160,in=20] (2,\t)  to[out=160,in=20] (0,\t) to[out = 160, in = -10] (-.3,\t+0.05);
}

\foreach \s in {0,2,4,6,8}
{
	\foreach \t in {0,2,4,6,8}
	{
		\node[draw,NavyBlue] at (\s,\t) {};
	}
	\foreach \t in {1,3,5,7,9}
	{
		\node[Red] at (\s,\t) {};
	}
}
\foreach \s in {1,3,5,7,9}
{
	\foreach \t in {0,2,4,6,8}
	{
		\node[Red] at (\s,\t) {};
	}
	\foreach \t in {1,3,5,7,9}
	{
		\node[NavyBlue] at (\s,\t) {};
	}
}

\draw[Blue] (4.5,3.5) to (4.5,2.5) node {} to (4.5,1.5) node {} to (4.5,0.5);
\foreach \s in {0,2,4,6,8}
{
	\draw[Blue] (\s,2) to (4.5,1.5);
	\draw[Blue] (\s+1,1) to (4.5,1.5);
}
\foreach \s in {0,2,4,6,8}
{
	\draw[Blue] (\s,2) to (4.5,2.5);
	\draw[Blue] (\s+1,3) to (4.5,2.5);
}

\draw[Mahogany] (6.5,5.5) node {} to (7.5,5.5) node {} to (8.5,5.5) node {} ;
\foreach \t in {1,3,5,7,9}
{
	\draw[Mahogany] (7,\t-1) to (7.5,5.5);
	\draw[Mahogany] (8,\t) to (7.5,5.5);
}
\end{tikzpicture}
\hspace{30pt}
\begin{tikzpicture}[row sep = 0.25cm, column sep =0.25cm]
\matrix(m)
{
\node[Red]{
0}; & \node[NavyBlue]{3}; & \node[Red]{4}; & \node[NavyBlue]{9}; & \node[Red]{8}; & \node[NavyBlue]{5}; & \node[Red]{2}; & \node[NavyBlue]{1}; & \node[Red]{6}; & \node[NavyBlue]{7}; \\ \node[NavyBlue]{
1}; & \node[Red]{8}; & \node[NavyBlue]{7}; & \node[Red]{2}; & \node[NavyBlue]{3}; & \node[Red]{6}; & \node[NavyBlue]{9}; & \node[Red]{0}; & \node[NavyBlue]{5}; & \node[Red]{4}; \\ \node[Red]{
2}; & \node[NavyBlue]{5}; & \node[Red]{6}; & \node[NavyBlue]{1}; & \node[Red]{0}; & \node[NavyBlue]{7}; & \node[Red]{4}; & \node[NavyBlue]{3}; & \node[Red]{8}; & \node[NavyBlue]{9}; \\ \node[NavyBlue]{
3}; & \node[Red]{0}; & \node[NavyBlue]{9}; & \node[Red]{4}; & \node[NavyBlue]{5}; & \node[Red]{8}; & \node[NavyBlue]{1}; & \node[Red]{2}; & \node[NavyBlue]{7}; & \node[Red]{6}; \\ \node[Red]{
4}; & \node[NavyBlue]{7}; & \node[Red]{8}; & \node[NavyBlue]{3}; & \node[Red]{2}; & \node[NavyBlue]{9}; & \node[Red]{6}; & \node[NavyBlue]{5}; & \node[Red]{0}; & \node[NavyBlue]{1}; \\ \node[NavyBlue]{
5}; & \node[Red]{2}; & \node[NavyBlue]{1}; & \node[Red]{6}; & \node[NavyBlue]{7}; & \node[Red]{0}; & \node[NavyBlue]{3}; & \node[Red]{4}; & \node[NavyBlue]{9}; & \node[Red]{8}; \\ \node[Red]{
6}; & \node[NavyBlue]{9}; & \node[Red]{0}; & \node[NavyBlue]{5}; & \node[Red]{4}; & \node[NavyBlue]{1}; & \node[Red]{8}; & \node[NavyBlue]{7}; & \node[Red]{2}; & \node[NavyBlue]{3}; \\ \node[NavyBlue]{
7}; & \node[Red]{4}; & \node[NavyBlue]{3}; & \node[Red]{8}; & \node[NavyBlue]{9}; & \node[Red]{2}; & \node[NavyBlue]{5}; & \node[Red]{6}; & \node[NavyBlue]{1}; & \node[Red]{0}; \\ \node[Red]{
8}; & \node[NavyBlue]{1}; & \node[Red]{2}; & \node[NavyBlue]{7}; & \node[Red]{6}; & \node[NavyBlue]{3}; & \node[Red]{0}; & \node[NavyBlue]{9}; & \node[Red]{4}; & \node[NavyBlue]{5}; \\ \node[NavyBlue]{
9}; & \node[Red]{6}; & \node[NavyBlue]{5}; & \node[Red]{0}; & \node[NavyBlue]{1}; & \node[Red]{4}; & \node[NavyBlue]{7}; & \node[Red]{8}; & \node[NavyBlue]{3}; & \node[Red]{2};\\
};
\end{tikzpicture}
\]
\caption{The $1$-skeleton of the $600$-cell is on the left. Red and blue vertices are even and odd respectively. One even hovering vertex and two consecutive odd hovering vertices are illustrated. On the right there is a legal system. Moves correspond to all vertices labelled with the same number. There are also moves corresponding to the hovering vertices which are not illustrated in the figure.}
\label{fig:600graph}
\end{figure}

\subsubsection{The 1-skeleton of the $600$-cell}
We refer to Figure~\ref{fig:600graph}.
Begin with a torus represented by a $10\times 10$ grid.
Note that it has a bipartite structure and we refer to its two classes of vertices as \emph{even} and \emph{odd}.
(We regard the northwest most vertex in the grid as even.)

We now add the following edges which join odd-to-odd and even-to-even:
For each of the 100 squares of the torus, we add both diagonals as edges.
For each even vertex we add an edge joining it to the two vertices that are two edges north and south of it in its column.
For each odd vertex we add an edge joining it to the two vertices that are two edges east and west of it in its row.
We use the terms \emph{row-cycle} and \emph{column-cycle} to refer to the corresponding cycles within the torus.

We now add 20 further \emph{hovering} vertices. Ten of these are \emph{odd} and ten are \emph{even}.
Each even hovering vertex corresponds to a consecutive pair of column-cycles and we attach it to all ten even vertices in those column-cycles. We add an edge between consecutive even hovering vertices.
Similarly, each odd hovering vertex corresponds to a consecutive pair of row-cycles and we attach it to all odd vertices in those row-cycles. We add an edge between consecutive odd hovering vertices.
The resulting graph $\Gamma$ has 120 vertices, and 720 edges.

Observe that the automorphism group of $\Gamma$
acts transitively on the non-hovering vertices and also on the hovering vertices.
It therefore suffices to examine a vertex $v$ in each case
and confirm that the graph spanned by the vertices adjacent to $v$ is an icosahedral graph
as illustrated in Figure~\ref{fig:link in 600}.
A blue hovering vertex of $\Gamma$ is adjacent to two horizontal row-cycles of blue vertices as well as two blue hovering vertices. Its link corresponds to an octahedron obtained by subdividing a 5-sided antiprism: Its horizontal row-cycles yield 5-cycles in the link that are connected by an alternating band of triangles, and each hovering vertex in the link is adjacent to all the vertices of one of these 5-cycles.
A blue non-hovering vertex is adjacent to four red non-hovering vertices, six blue non-hovering vertices, and two blue hovering vertices. Its link corresponds to an icosahedron obtained by subdividing a cuboctahedron.

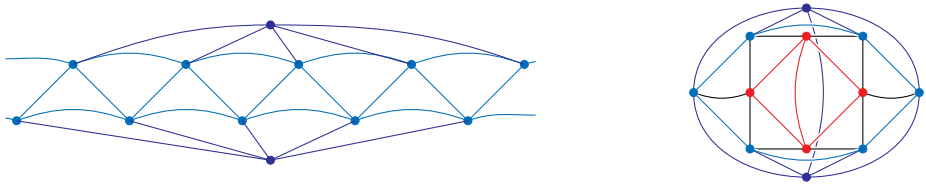
\begin{figure}\[
\begin{tikzpicture}[scale= 0.75, baseline = -11]
\tikzstyle{every node}=[circle, draw, fill,
                        inner sep=1pt, minimum width = 3pt]
\begin{scope}[shift={(0,-0.5)}]
\foreach \s in {0,2,4,6}
{
	\draw[NavyBlue] (\s,0) node {} to (\s+1,1) node {} to (\s+2,0) to[out=160, in = 20] (\s,0);
	\draw[NavyBlue] (\s+1,1) to[out=20, in = 160] (\s+3,1);
}
\draw[NavyBlue] (8,0) node {} to (9,1) node {};
\draw[NavyBlue] (0,0) to[out = 160, in = -10] (-.2,0.05);
\draw[NavyBlue] (1,1) to[out=160,in =0] (-.2,1.1);
\draw[NavyBlue] (8,0) to[out=20,in =180] (9.2,0.1);
\draw[NavyBlue] (9,1) to[out = 20, in = 190] (9.2,1.05);
\draw[Blue] (4.5,-0.7) node {};
\foreach \s in {0,2,4,6,8}
{
	\draw[Blue] (\s,0) to (4.5,-0.7);
}
\draw[Blue] (4.5,1.7) node {};
\foreach \s in {3,5,7}
{
	\draw[Blue] (\s,1) to (4.5,1.7);
}
\draw[Blue] (1,1) to[out = 20, in=180] (4.5,1.7);
\draw[Blue] (9,1) to[out=160, in =0] (4.5,1.7);
\end{scope}

\begin{scope}[shift={(14,0)}]
\draw[Blue] (0,-1.5) to[out=70,in=290] (0,1.5);

\draw[white,ultra thick] {
(-1,0) to (-1,1)
(-1,0) to (-1,-1)
(-1,0) to[out=200, in=-20] (-2,0)
(1,0) to[out=-20, in=200] (2,0)
(1,0) to (1,1)
(1,0) to (1,-1)
(0,1) to (-1,1)
(0,1) to (1,1)
(0,-1) to (-1,-1)
(0,-1) to (1,-1)
};

\draw {
(-1,0) to (-1,1)
(-1,0) to (-1,-1)
(-1,0) to[out=200, in=-20] (-2,0)
(1,0) to[out=-20, in=200] (2,0)
(1,0) to (1,1)
(1,0) to (1,-1)
(0,1) to (-1,1)
(0,1) to (1,1)
(0,-1) to (-1,-1)
(0,-1) to (1,-1)
};
\draw[white, ultra thick] (-1,-1) to (-2,0) to (-1,1) to[out=20, in=160] (1,1) to (2,0) to (1,-1)  to[out=200, in=-20] (-1,-1);
\draw[white, ultra thick] (-1,0) to (0,1) to (1,0) to (0,-1) to (-1,0);
\draw[white, ultra thick] (0,-1) to[out=110, in = 250] (0,1);
\draw[Red] (-1,0) node {} to (0,1) node {} to (1,0) node {} to (0,-1) node {} to (-1,0);
\draw[Red] (0,-1) to[out=110, in = 250] (0,1);
\draw[Blue] {
(0,-1.5) node {} to(-1,-1)
(0,-1.5) to (1,-1)
(0,-1.5) to[out=180, in=270] (-2,0)
(0,-1.5) to[out=0, in=270] (2,0)
};
\draw[Blue] {
(0,1.5) node {} to(-1,1)
(0,1.5) to (1,1)
(0,1.5) to[out=180, in=90] (-2,0)
(0,1.5) to[out=0, in=90] (2,0)
};
\draw[NavyBlue] (-1,-1) node {} to (-2,0) node {} to (-1,1) node {} to[out=20, in=160] (1,1) node {} to (2,0) node {} to (1,-1) node {} to[out=200, in=-20] (-1,-1);

\end{scope}
\end{tikzpicture}\]
\caption{\label{fig:link in 600}Links of a blue hovering vertex and non-hovering vertex.}
\end{figure}

\subsubsection{A legal system for the 600-cell 1-skeleton}

We color the vertices of the grid-vertices of $\Gamma$ as on the right in Figure~\ref{fig:600graph}:
The vertices in the westmost column are colored $0,1,\ldots, 9$ starting from the top of the grid.
The subsequent columns are colored so that even numbers decrease traveling southeast,
and odd numbers increase traveling northeast.

The cycle of even hovering vertices are colored $0',2', 4',6', 8', 0',2', 4',6',8'$.
The cycle of odd hovering vertices are colored $1', 3', 5', 7', 9', 1', 3', 5', 7', 9'$.

The start state $S_o$ consists of all vertices in alternate row-cycles of the torus,
together with a choice of alternate vertices within each hovering cycle.

Note that there are exactly 20 moves, corresponding to the distinct colors.

\subsubsection{Proof that it is a legal system}

Consider a state $S$ in the orbit of $S_o$.
As the reader can verify, a feature of the colored moves is that for consecutive column-cycles, their odd vertices in $S$ are \emph{complementary} in the sense that that for each color, exactly one of the two vertices with that color lies in $S$. The analogous statement holds for even vertices and row-cycles.

There is a \emph{degenerate} odd case where alternate column-cycles have all their odd vertices in $S$.
 Similarly there is a \emph{degenerate} even case where alternate row-cycles have all their even vertices in $S$.

There are now four cases to consider according to whether odd or even are degenerate.

We first observe that in a doubly-degenerate case, (e.g. $S_o$), the grid-edges connect each row-cycle to each column-cycle, and so $S$ is connected since in addition, each hovering vertex in $S$ is automatically connected at one of its two sides.

We now examine a non-degenerate case where
 some but not all odd vertices in a column-cycle are in $S$.
We will show that the odd vertices in $S$ are connected in this case:
The consecutive complementary color property shows that every odd vertex is connected to a ``traversing-cycle'' in the odd part of $S$ which contains a vertex in each column-cycle.
Indeed, consider a northmost odd vertex of $S$ within $S$-component of a column-cycle, as the odd vertex north of it is not in $S$,
we see that the vertex southeast of it is in $S$, and it is thus connected to an odd vertex in the next column eastward. Likewise, is connected to a traversing-cycle intersecting each column-cycle.
Each odd hovering $S$-vertex is connected to each traversing-cycle so we are done.

An analogous argument shows that for a non-degenerate even case, where some but not all even vertices in a row-cycle are in $S$, there is a traversing-cycle consisting of even vertices of $S$ that contains a vertex from each row-cycle, and contains a vertex from each column-cycle.

If the odd vertices of $S$ are degenerate but the even are not, then the traversing cycle in the even part of $S$
is connected to all odd rows. The case where the even part is degenerate and the odd part is connected analogously.

We now consider the case where neither odd nor even is degenerate:
Consider a column-cycle with a maximal number of vertices in $S$.
If it has four then these vertices connect to all even vertices and so the odd part is connected to the even part.
If it has three then these must be consecutive, since otherwise separate groups of two and one will be adjacent to all even vertices in that column. The even and odd are thus connected unless there is only one even state in $S$ within that column. But then the complementary property assures that there are four even vertices in $S$ within the next column. But these are connected to all odd vertices within that column.

\begin{rem}
The group $G(\Gamma)$ is a cocompact right-angled reflection group in $\hyperbolic^4$
that was studied by Bowditch and Mess \cite{BowditchMess94}  who showed that $G(\Gamma)$ is  \emph{incoherent} in the sense that it has a f.g. subgroup that is not f.p. and we refer also to the work of M.Kapovich and Potyagailo and Vinberg \cite{KapovichPotyagailoVinberg2008} as well as
\cite{Kapovich2013}.
\end{rem}

\subsection{Brinkmann graph}

The Brinkmann graph is a very symmetric graph with $21$ vertices and $42$ edges, and with $\girth(\Gamma)=5$. Thus $\curvature(\Gamma)=1$.
We describe a legal system for $\Gamma$ associated to a coloring in Figure~\ref{fig:brinkmann}.
The Brinkmann graph obviously has subgraphs $\Gamma'$ with $\curvature(\Gamma')=0$,
but we have not checked if any such subgraph has a legal system.

\begin{figure}\label{fig:brinkmann}\centering
\tikzstyle{every node}=[circle, draw, fill,
                        inner sep=0pt, minimum width = 3pt]
\begin{tikzpicture}[scale = 1]
\begin{scope}
\tikzstyle{every node}=[]
\draw [->, color=NavyBlue, very thick] (2.9,-1.4) -- node[below] {} (1.8,-1.4);
\draw [->, very thick] (4,-1.9) -- node[right] {} (4,-2.6);
\draw [->, color=ForestGreen, very thick] (5,-1.8) -- node[right] {} (6.5,-3.3);
\draw [->, color=WildStrawberry, very thick] (5.8,-1.4) -- node[below] {} (7,-1.4);
\draw [->, color=NavyBlue, very thick] (9.6,-1.4) -- node[below] {} (10.7,-1.4);
\draw [->, color=NavyBlue, very thick] (9.6,-3.1) -- node[above] {} (10.7,-3.1);
\draw [->, color=NavyBlue, very thick] (2.9,-3.1) -- node[above] {} (1.8,-3.1);
\end{scope}
\begin{scope}[shift={(0,0)}]
\foreach \x in {0,...,6}
{
	\coordinate (s\x) at (310 + \x*360/7 : 2);
}

\tkzInterLL(s0,s3)(s1,s5) \tkzGetPoint{t4}
\tkzInterLL(s1,s4)(s2,s6) \tkzGetPoint{t5}
\tkzInterLL(s2,s5)(s3,s0) \tkzGetPoint{t6}
\tkzInterLL(s3,s6)(s4,s1) \tkzGetPoint{t0}
\tkzInterLL(s4,s0)(s5,s2) \tkzGetPoint{t1}
\tkzInterLL(s5,s1)(s6,s3) \tkzGetPoint{t2}
\tkzInterLL(s6,s2)(s0,s4) \tkzGetPoint{t3}

 \draw (s0) -- (s3);
 \draw (s1) -- (s4);
 \draw (s2) -- (s5);
 \draw (s3) -- (s6);
 \draw (s4) -- (s0);
 \draw (s5) -- (s1);
 \draw (s6) -- (s2);
 \draw (s0) -- (s2);
 \draw (s1) -- (s3);
 \draw (s2) -- (s4);
 \draw (s3) -- (s5);
 \draw (s4) -- (s6);
 \draw (s5) -- (s0);
 \draw (s6) -- (s1);

\draw[ForestGreen] (s0) node {};
\draw[ForestGreen] (s1) node {};
\draw[NavyBlue] (s2) node {};
\draw[NavyBlue] (s3) node {};
\draw[WildStrawberry] (s4) node {};
\draw[WildStrawberry] (s5) node {};
\draw (s6) node {};

\tkzInterLL(s0,t0)(t6,t1) \tkzGetPoint{r0}
\tkzInterLL(s1,t1)(t0,t2) \tkzGetPoint{r1}
\tkzInterLL(s2,t2)(t1,t3) \tkzGetPoint{r2}
\tkzInterLL(s3,t3)(t2,t4) \tkzGetPoint{r3}
\tkzInterLL(s4,t4)(t3,t5) \tkzGetPoint{r4}
\tkzInterLL(s5,t5)(t4,t6) \tkzGetPoint{r5}
\tkzInterLL(s6,t6)(t5,t0) \tkzGetPoint{r6}

\draw[ForestGreen] (t0) node {};
\draw[NavyBlue] (t1) node {};
\draw[NavyBlue] (t2) node {};
\draw[WildStrawberry] (t3) node {};
\draw[WildStrawberry] (t4) node {};
\draw (t5) node {};
\draw[ForestGreen] (t6) node {};

 \draw (r0) -- (r3);
 \draw (r1) -- (r4);
 \draw (r2) -- (r5);
 \draw (r3) -- (r6);
 \draw (r4) -- (r0);
 \draw (r5) -- (r1);
 \draw (r6) -- (r2);

\draw[WildStrawberry] (r0) node {};
\draw (r1) node {};
\draw[ForestGreen] (r2) node {};
\draw[ForestGreen] (r3) node {};
\draw[NavyBlue] (r4) node {};
\draw[NavyBlue] (r5) node {};
\draw[WildStrawberry] (r6) node {};

\draw[black]
{
	(s1) circle (4pt)
	(s2) circle (4pt)
	(s4) circle (4pt)
	(s6) circle (4pt)
	(t2) circle (4pt)
	(t4) circle (4pt)
	(t6) circle (4pt)
	(r0) circle (4pt)
	(r2) circle (4pt)
	(r5) circle (4pt)
};
\end{scope}
\begin{scope}[shift = {(4,0)}]
\foreach \x in {0,...,6}
{
	\coordinate (s\x) at (310 + \x*360/7 : 2);
}

\tkzInterLL(s0,s3)(s1,s5) \tkzGetPoint{t4}
\tkzInterLL(s1,s4)(s2,s6) \tkzGetPoint{t5}
\tkzInterLL(s2,s5)(s3,s0) \tkzGetPoint{t6}
\tkzInterLL(s3,s6)(s4,s1) \tkzGetPoint{t0}
\tkzInterLL(s4,s0)(s5,s2) \tkzGetPoint{t1}
\tkzInterLL(s5,s1)(s6,s3) \tkzGetPoint{t2}
\tkzInterLL(s6,s2)(s0,s4) \tkzGetPoint{t3}

 \draw (s0) -- (s3);
 \draw (s1) -- (s4);
 \draw (s2) -- (s5);
 \draw (s3) -- (s6);
 \draw (s4) -- (s0);
 \draw (s5) -- (s1);
 \draw (s6) -- (s2);
 \draw (s0) -- (s2);
 \draw (s1) -- (s3);
 \draw (s2) -- (s4);
 \draw (s3) -- (s5);
 \draw (s4) -- (s6);
 \draw (s5) -- (s0);
 \draw (s6) -- (s1);

\draw[ForestGreen] (s0) node {};
\draw[ForestGreen] (s1) node {};
\draw[NavyBlue] (s2) node {};
\draw[NavyBlue] (s3) node {};
\draw[WildStrawberry] (s4) node {};
\draw[WildStrawberry] (s5) node {};
\draw (s6) node {};

\tkzInterLL(s0,t0)(t6,t1) \tkzGetPoint{r0}
\tkzInterLL(s1,t1)(t0,t2) \tkzGetPoint{r1}
\tkzInterLL(s2,t2)(t1,t3) \tkzGetPoint{r2}
\tkzInterLL(s3,t3)(t2,t4) \tkzGetPoint{r3}
\tkzInterLL(s4,t4)(t3,t5) \tkzGetPoint{r4}
\tkzInterLL(s5,t5)(t4,t6) \tkzGetPoint{r5}
\tkzInterLL(s6,t6)(t5,t0) \tkzGetPoint{r6}

\draw[ForestGreen] (t0) node {};
\draw[NavyBlue] (t1) node {};
\draw[NavyBlue] (t2) node {};
\draw[WildStrawberry] (t3) node {};
\draw[WildStrawberry] (t4) node {};
\draw (t5) node {};
\draw[ForestGreen] (t6) node {};

 \draw (r0) -- (r3);
 \draw (r1) -- (r4);
 \draw (r2) -- (r5);
 \draw (r3) -- (r6);
 \draw (r4) -- (r0);
 \draw (r5) -- (r1);
 \draw (r6) -- (r2);

\draw[WildStrawberry] (r0) node {};
\draw (r1) node {};
\draw[ForestGreen] (r2) node {};
\draw[ForestGreen] (r3) node {};
\draw[NavyBlue] (r4) node {};
\draw[NavyBlue] (r5) node {};
\draw[WildStrawberry] (r6) node {};

\draw[black]
{
	(s1) circle (4pt)
	(s3) circle (4pt)
	(s4) circle (4pt)
	(s6) circle (4pt)
	(t1) circle (4pt)
	(t4) circle (4pt)
	(t6) circle (4pt)
	(r0) circle (4pt)
	(r2) circle (4pt)
	(r4) circle (4pt)
};
\coordinate (tp1) at (-1.9,-1.9);
\end{scope}
\begin{scope}[shift={(8,0)}]
\foreach \x in {0,...,6}
{
	\coordinate (s\x) at (310 + \x*360/7 : 2);
}

\tkzInterLL(s0,s3)(s1,s5) \tkzGetPoint{t4}
\tkzInterLL(s1,s4)(s2,s6) \tkzGetPoint{t5}
\tkzInterLL(s2,s5)(s3,s0) \tkzGetPoint{t6}
\tkzInterLL(s3,s6)(s4,s1) \tkzGetPoint{t0}
\tkzInterLL(s4,s0)(s5,s2) \tkzGetPoint{t1}
\tkzInterLL(s5,s1)(s6,s3) \tkzGetPoint{t2}
\tkzInterLL(s6,s2)(s0,s4) \tkzGetPoint{t3}

 \draw (s0) -- (s3);
 \draw (s1) -- (s4);
 \draw (s2) -- (s5);
 \draw (s3) -- (s6);
 \draw (s4) -- (s0);
 \draw (s5) -- (s1);
 \draw (s6) -- (s2);
 \draw (s0) -- (s2);
 \draw (s1) -- (s3);
 \draw (s2) -- (s4);
 \draw (s3) -- (s5);
 \draw (s4) -- (s6);
 \draw (s5) -- (s0);
 \draw (s6) -- (s1);

\draw[ForestGreen] (s0) node {};
\draw[ForestGreen] (s1) node {};
\draw[NavyBlue] (s2) node {};
\draw[NavyBlue] (s3) node {};
\draw[WildStrawberry] (s4) node {};
\draw[WildStrawberry] (s5) node {};
\draw (s6) node {};

\tkzInterLL(s0,t0)(t6,t1) \tkzGetPoint{r0}
\tkzInterLL(s1,t1)(t0,t2) \tkzGetPoint{r1}
\tkzInterLL(s2,t2)(t1,t3) \tkzGetPoint{r2}
\tkzInterLL(s3,t3)(t2,t4) \tkzGetPoint{r3}
\tkzInterLL(s4,t4)(t3,t5) \tkzGetPoint{r4}
\tkzInterLL(s5,t5)(t4,t6) \tkzGetPoint{r5}
\tkzInterLL(s6,t6)(t5,t0) \tkzGetPoint{r6}

\draw[ForestGreen] (t0) node {};
\draw[NavyBlue] (t1) node {};
\draw[NavyBlue] (t2) node {};
\draw[WildStrawberry] (t3) node {};
\draw[WildStrawberry] (t4) node {};
\draw (t5) node {};
\draw[ForestGreen] (t6) node {};

 \draw (r0) -- (r3);
 \draw (r1) -- (r4);
 \draw (r2) -- (r5);
 \draw (r3) -- (r6);
 \draw (r4) -- (r0);
 \draw (r5) -- (r1);
 \draw (r6) -- (r2);

\draw[WildStrawberry] (r0) node {};
\draw (r1) node {};
\draw[ForestGreen] (r2) node {};
\draw[ForestGreen] (r3) node {};
\draw[NavyBlue] (r4) node {};
\draw[NavyBlue] (r5) node {};
\draw[WildStrawberry] (r6) node {};

\draw[black]
{
	(s1) circle (4pt)
	(s3) circle (4pt)
	(s5) circle (4pt)
	(s6) circle (4pt)
	(t1) circle (4pt)
	(t3) circle (4pt)
	(t6) circle (4pt)
	(r6) circle (4pt)
	(r2) circle (4pt)
	(r4) circle (4pt)
};
\end{scope}
\begin{scope}[shift={(12,0)}]
\foreach \x in {0,...,6}
{
	\coordinate (s\x) at (310 + \x*360/7 : 2);
}
\tkzInterLL(s0,s3)(s1,s5) \tkzGetPoint{t4}
\tkzInterLL(s1,s4)(s2,s6) \tkzGetPoint{t5}
\tkzInterLL(s2,s5)(s3,s0) \tkzGetPoint{t6}
\tkzInterLL(s3,s6)(s4,s1) \tkzGetPoint{t0}
\tkzInterLL(s4,s0)(s5,s2) \tkzGetPoint{t1}
\tkzInterLL(s5,s1)(s6,s3) \tkzGetPoint{t2}
\tkzInterLL(s6,s2)(s0,s4) \tkzGetPoint{t3}

 \draw (s0) -- (s3);
 \draw (s1) -- (s4);
 \draw (s2) -- (s5);
 \draw (s3) -- (s6);
 \draw (s4) -- (s0);
 \draw (s5) -- (s1);
 \draw (s6) -- (s2);
 \draw (s0) -- (s2);
 \draw (s1) -- (s3);
 \draw (s2) -- (s4);
 \draw (s3) -- (s5);
 \draw (s4) -- (s6);
 \draw (s5) -- (s0);
 \draw (s6) -- (s1);

\draw[ForestGreen] (s0) node {};
\draw[ForestGreen] (s1) node {};
\draw[NavyBlue] (s2) node {};
\draw[NavyBlue] (s3) node {};
\draw[WildStrawberry] (s4) node {};
\draw[WildStrawberry] (s5) node {};
\draw (s6) node {};

\tkzInterLL(s0,t0)(t6,t1) \tkzGetPoint{r0}
\tkzInterLL(s1,t1)(t0,t2) \tkzGetPoint{r1}
\tkzInterLL(s2,t2)(t1,t3) \tkzGetPoint{r2}
\tkzInterLL(s3,t3)(t2,t4) \tkzGetPoint{r3}
\tkzInterLL(s4,t4)(t3,t5) \tkzGetPoint{r4}
\tkzInterLL(s5,t5)(t4,t6) \tkzGetPoint{r5}
\tkzInterLL(s6,t6)(t5,t0) \tkzGetPoint{r6}

\draw[ForestGreen] (t0) node {};
\draw[NavyBlue] (t1) node {};
\draw[NavyBlue] (t2) node {};
\draw[WildStrawberry] (t3) node {};
\draw[WildStrawberry] (t4) node {};
\draw (t5) node {};
\draw[ForestGreen] (t6) node {};

 \draw (r0) -- (r3);
 \draw (r1) -- (r4);
 \draw (r2) -- (r5);
 \draw (r3) -- (r6);
 \draw (r4) -- (r0);
 \draw (r5) -- (r1);
 \draw (r6) -- (r2);

\draw[WildStrawberry] (r0) node {};
\draw (r1) node {};
\draw[ForestGreen] (r2) node {};
\draw[ForestGreen] (r3) node {};
\draw[NavyBlue] (r4) node {};
\draw[NavyBlue] (r5) node {};
\draw[WildStrawberry] (r6) node {};

\draw[black]
{
	(s1) circle (4pt)
	(s2) circle (4pt)
	(s5) circle (4pt)
	(s6) circle (4pt)
	(t2) circle (4pt)
	(t3) circle (4pt)
	(t6) circle (4pt)
	(r6) circle (4pt)
	(r2) circle (4pt)
	(r5) circle (4pt)
};
\end{scope}
\begin{scope}[shift = {(0,-4.5)}]
\foreach \x in {0,...,6}
{
	\coordinate (s\x) at (310 + \x*360/7 : 2);
}

\tkzInterLL(s0,s3)(s1,s5) \tkzGetPoint{t4}
\tkzInterLL(s1,s4)(s2,s6) \tkzGetPoint{t5}
\tkzInterLL(s2,s5)(s3,s0) \tkzGetPoint{t6}
\tkzInterLL(s3,s6)(s4,s1) \tkzGetPoint{t0}
\tkzInterLL(s4,s0)(s5,s2) \tkzGetPoint{t1}
\tkzInterLL(s5,s1)(s6,s3) \tkzGetPoint{t2}
\tkzInterLL(s6,s2)(s0,s4) \tkzGetPoint{t3}

 \draw (s0) -- (s3);
 \draw (s1) -- (s4);
 \draw (s2) -- (s5);
 \draw (s3) -- (s6);
 \draw (s4) -- (s0);
 \draw (s5) -- (s1);
 \draw (s6) -- (s2);
 \draw (s0) -- (s2);
 \draw (s1) -- (s3);
 \draw (s2) -- (s4);
 \draw (s3) -- (s5);
 \draw (s4) -- (s6);
 \draw (s5) -- (s0);
 \draw (s6) -- (s1);

\draw[ForestGreen] (s0) node {};
\draw[ForestGreen] (s1) node {};
\draw[NavyBlue] (s2) node {};
\draw[NavyBlue] (s3) node {};
\draw[WildStrawberry] (s4) node {};
\draw[WildStrawberry] (s5) node {};
\draw (s6) node {};

\tkzInterLL(s0,t0)(t6,t1) \tkzGetPoint{r0}
\tkzInterLL(s1,t1)(t0,t2) \tkzGetPoint{r1}
\tkzInterLL(s2,t2)(t1,t3) \tkzGetPoint{r2}
\tkzInterLL(s3,t3)(t2,t4) \tkzGetPoint{r3}
\tkzInterLL(s4,t4)(t3,t5) \tkzGetPoint{r4}
\tkzInterLL(s5,t5)(t4,t6) \tkzGetPoint{r5}
\tkzInterLL(s6,t6)(t5,t0) \tkzGetPoint{r6}

\draw[ForestGreen] (t0) node {};
\draw[NavyBlue] (t1) node {};
\draw[NavyBlue] (t2) node {};
\draw[WildStrawberry] (t3) node {};
\draw[WildStrawberry] (t4) node {};
\draw (t5) node {};
\draw[ForestGreen] (t6) node {};

 \draw (r0) -- (r3);
 \draw (r1) -- (r4);
 \draw (r2) -- (r5);
 \draw (r3) -- (r6);
 \draw (r4) -- (r0);
 \draw (r5) -- (r1);
 \draw (r6) -- (r2);

\draw[WildStrawberry] (r0) node {};
\draw (r1) node {};
\draw[ForestGreen] (r2) node {};
\draw[ForestGreen] (r3) node {};
\draw[NavyBlue] (r4) node {};
\draw[NavyBlue] (r5) node {};
\draw[WildStrawberry] (r6) node {};

\draw[black]
{
	(s1) circle (4pt)
	(s2) circle (4pt)
	(s4) circle (4pt)
	(t2) circle (4pt)
	(t4) circle (4pt)
	(t6) circle (4pt)
	(t5) circle (4pt)
	(r0) circle (4pt)
	(r2) circle (4pt)
	(r5) circle (4pt)
	(r1) circle (4pt)

};
\end{scope}
\begin{scope}[shift = {(4,-4.5)}]
\foreach \x in {0,...,6}
{
	\coordinate (s\x) at (310 + \x*360/7 : 2);
}

\tkzInterLL(s0,s3)(s1,s5) \tkzGetPoint{t4}
\tkzInterLL(s1,s4)(s2,s6) \tkzGetPoint{t5}
\tkzInterLL(s2,s5)(s3,s0) \tkzGetPoint{t6}
\tkzInterLL(s3,s6)(s4,s1) \tkzGetPoint{t0}
\tkzInterLL(s4,s0)(s5,s2) \tkzGetPoint{t1}
\tkzInterLL(s5,s1)(s6,s3) \tkzGetPoint{t2}
\tkzInterLL(s6,s2)(s0,s4) \tkzGetPoint{t3}

 \draw (s0) -- (s3);
 \draw (s1) -- (s4);
 \draw (s2) -- (s5);
 \draw (s3) -- (s6);
 \draw (s4) -- (s0);
 \draw (s5) -- (s1);
 \draw (s6) -- (s2);
 \draw (s0) -- (s2);
 \draw (s1) -- (s3);
 \draw (s2) -- (s4);
 \draw (s3) -- (s5);
 \draw (s4) -- (s6);
 \draw (s5) -- (s0);
 \draw (s6) -- (s1);

\draw[ForestGreen] (s0) node {};
\draw[ForestGreen] (s1) node {};
\draw[NavyBlue] (s2) node {};
\draw[NavyBlue] (s3) node {};
\draw[WildStrawberry] (s4) node {};
\draw[WildStrawberry] (s5) node {};
\draw (s6) node {};

\tkzInterLL(s0,t0)(t6,t1) \tkzGetPoint{r0}
\tkzInterLL(s1,t1)(t0,t2) \tkzGetPoint{r1}
\tkzInterLL(s2,t2)(t1,t3) \tkzGetPoint{r2}
\tkzInterLL(s3,t3)(t2,t4) \tkzGetPoint{r3}
\tkzInterLL(s4,t4)(t3,t5) \tkzGetPoint{r4}
\tkzInterLL(s5,t5)(t4,t6) \tkzGetPoint{r5}
\tkzInterLL(s6,t6)(t5,t0) \tkzGetPoint{r6}

\draw[ForestGreen] (t0) node {};
\draw[NavyBlue] (t1) node {};
\draw[NavyBlue] (t2) node {};
\draw[WildStrawberry] (t3) node {};
\draw[WildStrawberry] (t4) node {};
\draw (t5) node {};
\draw[ForestGreen] (t6) node {};

 \draw (r0) -- (r3);
 \draw (r1) -- (r4);
 \draw (r2) -- (r5);
 \draw (r3) -- (r6);
 \draw (r4) -- (r0);
 \draw (r5) -- (r1);
 \draw (r6) -- (r2);

\draw[WildStrawberry] (r0) node {};
\draw (r1) node {};
\draw[ForestGreen] (r2) node {};
\draw[ForestGreen] (r3) node {};
\draw[NavyBlue] (r4) node {};
\draw[NavyBlue] (r5) node {};
\draw[WildStrawberry] (r6) node {};

\draw[black]
{
	(s1) circle (4pt)
	(s3) circle (4pt)
	(s4) circle (4pt)
	(t1) circle (4pt)
	(t4) circle (4pt)
	(t6) circle (4pt)
	(t5) circle (4pt)
	(r0) circle (4pt)
	(r2) circle (4pt)
	(r4) circle (4pt)
	(r1) circle (4pt)
};
\end{scope}
\begin{scope}[shift = {(8,-4.5)}]
\foreach \x in {0,...,6}
{
	\coordinate (s\x) at (310 + \x*360/7 : 2);
}

\tkzInterLL(s0,s3)(s1,s5) \tkzGetPoint{t4}
\tkzInterLL(s1,s4)(s2,s6) \tkzGetPoint{t5}
\tkzInterLL(s2,s5)(s3,s0) \tkzGetPoint{t6}
\tkzInterLL(s3,s6)(s4,s1) \tkzGetPoint{t0}
\tkzInterLL(s4,s0)(s5,s2) \tkzGetPoint{t1}
\tkzInterLL(s5,s1)(s6,s3) \tkzGetPoint{t2}
\tkzInterLL(s6,s2)(s0,s4) \tkzGetPoint{t3}

 \draw (s0) -- (s3);
 \draw (s1) -- (s4);
 \draw (s2) -- (s5);
 \draw (s3) -- (s6);
 \draw (s4) -- (s0);
 \draw (s5) -- (s1);
 \draw (s6) -- (s2);
 \draw (s0) -- (s2);
 \draw (s1) -- (s3);
 \draw (s2) -- (s4);
 \draw (s3) -- (s5);
 \draw (s4) -- (s6);
 \draw (s5) -- (s0);
 \draw (s6) -- (s1);

\draw[ForestGreen] (s0) node {};
\draw[ForestGreen] (s1) node {};
\draw[NavyBlue] (s2) node {};
\draw[NavyBlue] (s3) node {};
\draw[WildStrawberry] (s4) node {};
\draw[WildStrawberry] (s5) node {};
\draw (s6) node {};

\tkzInterLL(s0,t0)(t6,t1) \tkzGetPoint{r0}
\tkzInterLL(s1,t1)(t0,t2) \tkzGetPoint{r1}
\tkzInterLL(s2,t2)(t1,t3) \tkzGetPoint{r2}
\tkzInterLL(s3,t3)(t2,t4) \tkzGetPoint{r3}
\tkzInterLL(s4,t4)(t3,t5) \tkzGetPoint{r4}
\tkzInterLL(s5,t5)(t4,t6) \tkzGetPoint{r5}
\tkzInterLL(s6,t6)(t5,t0) \tkzGetPoint{r6}

\draw[ForestGreen] (t0) node {};
\draw[NavyBlue] (t1) node {};
\draw[NavyBlue] (t2) node {};
\draw[WildStrawberry] (t3) node {};
\draw[WildStrawberry] (t4) node {};
\draw (t5) node {};
\draw[ForestGreen] (t6) node {};

 \draw (r0) -- (r3);
 \draw (r1) -- (r4);
 \draw (r2) -- (r5);
 \draw (r3) -- (r6);
 \draw (r4) -- (r0);
 \draw (r5) -- (r1);
 \draw (r6) -- (r2);

\draw[WildStrawberry] (r0) node {};
\draw (r1) node {};
\draw[ForestGreen] (r2) node {};
\draw[ForestGreen] (r3) node {};
\draw[NavyBlue] (r4) node {};
\draw[NavyBlue] (r5) node {};
\draw[WildStrawberry] (r6) node {};

\draw[black]
{
	(s0) circle (4pt)
	(s3) circle (4pt)
	(s4) circle (4pt)
	(s6) circle (4pt)
	(t1) circle (4pt)
	(t4) circle (4pt)
	(t0) circle (4pt)
	(r0) circle (4pt)
	(r3) circle (4pt)
	(r4) circle (4pt)
};
\end{scope}
\begin{scope}[shift = {(12,-4.5)}]
\foreach \x in {0,...,6}
{
	\coordinate (s\x) at (310 + \x*360/7 : 2);
}

\tkzInterLL(s0,s3)(s1,s5) \tkzGetPoint{t4}
\tkzInterLL(s1,s4)(s2,s6) \tkzGetPoint{t5}
\tkzInterLL(s2,s5)(s3,s0) \tkzGetPoint{t6}
\tkzInterLL(s3,s6)(s4,s1) \tkzGetPoint{t0}
\tkzInterLL(s4,s0)(s5,s2) \tkzGetPoint{t1}
\tkzInterLL(s5,s1)(s6,s3) \tkzGetPoint{t2}
\tkzInterLL(s6,s2)(s0,s4) \tkzGetPoint{t3}

 \draw (s0) -- (s3);
 \draw (s1) -- (s4);
 \draw (s2) -- (s5);
 \draw (s3) -- (s6);
 \draw (s4) -- (s0);
 \draw (s5) -- (s1);
 \draw (s6) -- (s2);
 \draw (s0) -- (s2);
 \draw (s1) -- (s3);
 \draw (s2) -- (s4);
 \draw (s3) -- (s5);
 \draw (s4) -- (s6);
 \draw (s5) -- (s0);
 \draw (s6) -- (s1);

\draw[ForestGreen] (s0) node {};
\draw[ForestGreen] (s1) node {};
\draw[NavyBlue] (s2) node {};
\draw[NavyBlue] (s3) node {};
\draw[WildStrawberry] (s4) node {};
\draw[WildStrawberry] (s5) node {};
\draw (s6) node {};

\tkzInterLL(s0,t0)(t6,t1) \tkzGetPoint{r0}
\tkzInterLL(s1,t1)(t0,t2) \tkzGetPoint{r1}
\tkzInterLL(s2,t2)(t1,t3) \tkzGetPoint{r2}
\tkzInterLL(s3,t3)(t2,t4) \tkzGetPoint{r3}
\tkzInterLL(s4,t4)(t3,t5) \tkzGetPoint{r4}
\tkzInterLL(s5,t5)(t4,t6) \tkzGetPoint{r5}
\tkzInterLL(s6,t6)(t5,t0) \tkzGetPoint{r6}

\draw[ForestGreen] (t0) node {};
\draw[NavyBlue] (t1) node {};
\draw[NavyBlue] (t2) node {};
\draw[WildStrawberry] (t3) node {};
\draw[WildStrawberry] (t4) node {};
\draw (t5) node {};
\draw[ForestGreen] (t6) node {};

 \draw (r0) -- (r3);
 \draw (r1) -- (r4);
 \draw (r2) -- (r5);
 \draw (r3) -- (r6);
 \draw (r4) -- (r0);
 \draw (r5) -- (r1);
 \draw (r6) -- (r2);

\draw[WildStrawberry] (r0) node {};
\draw (r1) node {};
\draw[ForestGreen] (r2) node {};
\draw[ForestGreen] (r3) node {};
\draw[NavyBlue] (r4) node {};
\draw[NavyBlue] (r5) node {};
\draw[WildStrawberry] (r6) node {};

\draw[black]
{
	(s0) circle (4pt)
	(s3) circle (4pt)
	(s5) circle (4pt)
	(s6) circle (4pt)
	(t1) circle (4pt)
	(t3) circle (4pt)
	(t0) circle (4pt)
	(r6) circle (4pt)
	(r3) circle (4pt)
	(r4) circle (4pt)
};
\end{scope}
\end{tikzpicture}
\caption{The legal system of Brinkmann graph has $4$ moves each corresponding to vertices in one of $4$ colors in the figure. An orbit of a state under these moves consists of $16$ states. In the figure we illustrate $8$ states of a legal orbit, the $8$ other states are ``complementary'' to the illustrated states, i.e. they are obtained from the illustrated states by exchanging circled and uncircled vertices. Thus to ensure that the orbit is legal it suffices to check only $8$ illustrated states.}
\label{fig:Brinkmann}
\end{figure}
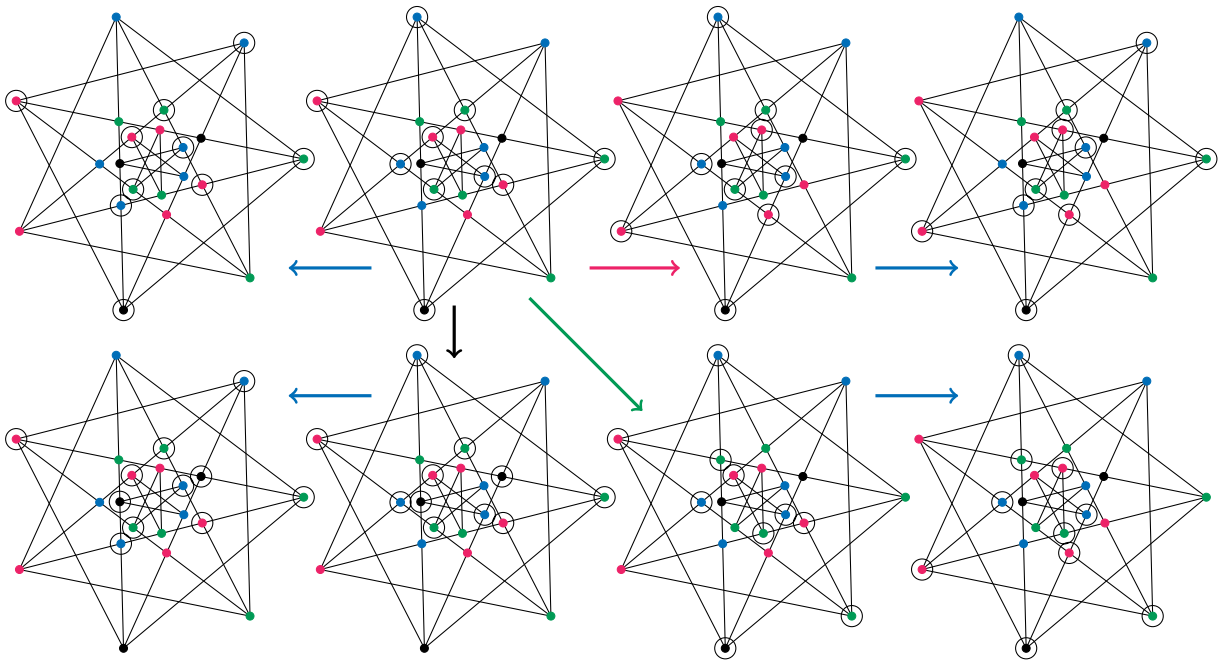

\subsection{Blowup of a cube}

Let $\bar \Gamma$ be the 1-skeleton of a $d$-cube.
We will produce a graph $\Gamma$ and a map $\rho: \Gamma\rightarrow \bar \Gamma$ with the following properties:
There is a number $n$ and we will later conveniently assume that $n$ is a large prime.
For each $\bar v\in \bar \Gamma^0$, its preimage $\rho^{-1}(\bar v)$ consists
of a set of vertices $v_i : 1\leq i \leq n$.

We distinguish one of the three  parallelism classes of edges of $\bar \Gamma$.

Let $\bar e$ be a distinguished edge of $\bar \Gamma$ whose endpoints are $\bar u, \bar v$.
Its preimage in $\Gamma$ consists of a set of $2n-1$ edges each of which maps to $\bar e$,
such that their union with $u_1,\ldots, u_n, v_1,\ldots, v_n$ is a tree.
(In practice $\rho^{-1}(e)$ is obtained from a $2n$-cycle by removing an edge.)

Let $\bar e$ be a non-distinguished edge of $\bar \Gamma$ whose endpoints are $\bar u,\bar v$.
Its preimage consists of $n$ edges mapping to $\bar e$, such that the correspondence between their endpoints
yields a bijection $\{u_1,\ldots, u_n\} \leftrightarrow \{v_1,\ldots, v_n\}$.

We now explain that for strategic choices above we have  $\girth(\Gamma)\geq 6$.
First observe that $\Gamma$ is bipartite, and also simplicial by construction, and so it suffices to exclude
$4$-cycles.
A cycle in $\Gamma$ projects to a combinatorial path in $\bar \Gamma$.
Observe that a $4$-cycle cannot project to a single edge, since the preimage of a distinguished edge is a tree,
and the preimage of a non-distinguished edge is a disjoint union of edges.
Likewise, the $4$-cycle cannot project to the union of two edges, since at least one of these is non-distinguished,
and so consecutive edges of the $4$-cycle mapping to this non-distinguished edge would not share an endpoint.
The remaining possibility is that the $4$-cycle maps to a $4$-cycle of $\bar \Gamma$.
It thus suffices to exclude 4-cycles over the 4-cycles of $\bar \Gamma$.

When $n$ is large, one can choose the trees and the bijections at random for each $e$ and the conclusion almost always holds. Moreover, $\girth(\Gamma)$ can be ensured to be arbitrarily large for large $n$ in this case.

Alternately, for the distinguished edge $e$ with vertices $\bar u,\bar v$ we use  a ``linear tree'' $u_1 - v_1 - u_2 - v_2 - \cdots - v_{n-1} - u_n - v_n$.
And we associate a natural number $ k_e < \frac{n}{2}$ to each non-distinguished edge $e$,
and use the bijection $u_i \mapsto v_{i+k_e}$ with subscripts taken modulo $n$.
And we then require that for edges $\bar e, \bar e'$ that are parallel in a square of $\bar \Gamma$,
the associated numbers $k_e, k_{e'}$ satisfy $|k_e- k_{e'}|\geq 3$.

We note that when $d>3$, the graph $\Gamma$ has $\curvature(\Gamma)>0$.
However, when $d=3$, we have $\curvature(\Gamma)=0$.
Indeed,  $\Gamma$ then has $8n$ vertices, and $8n+4(2n-1)$ edges.
Thus $\curvature(\Gamma)= 1- \frac{8n}{2}+\frac{8n+4(2n-1)}{4} = 1-4n + 2n+ (2n-1) =0$.

A legal system for $\Gamma$ is the ``preimage'' of a legal system for $\bar \Gamma$ as described in
Example~\ref{ex:cube}: \begin{com} this works for arbitrary cubes - and even cubes minus some edges...\end{com}
Its moves correspond to a 2-coloring of $\Gamma$ and its states correspond to preimages of states
in the legal system for $\bar \Gamma$.
To see that each such state $S$ is connected, let $\bar S$ be the corresponding state in $\bar \Gamma$.
 For each vertex $v_i$ in $S$ there is a path in $\bar S$ from $\bar v$ to a distinguished edge $\bar e$.
 This path lifts to a path in $S$ that ends at the preimage of $\bar e$ which is a tree.

\begin{rem}
It appears that the key point to generalizing this ``blow-up'' construction of a graph $\bar \Gamma$ with a legal system, is that there is a distinguished edge in each state.
\end{rem}

\subsection{L\"{o}bell graphs and right-angled 3-dimensional hyperbolic reflection groups}

\begin{defn}[L\"{o}bell graphs]
The \emph{$n$-antiprism} is the polyhedron whose boundary consists of two disjoint $n$-gon faces joined by an annulus that is subdivided into $2n$ triangles, so that there is one $n$-gon and three triangles at each vertex.
The \emph{dual L\"{o}bell graph} of degree $n\geq 4$ is the 1-skeleton of the polyhedron obtained from an $n$-antiprism by centrally subdividing each $n$-gon into $n$~triangles. For example, for $n=5$ we obtain an icosahedron. The L\"{o}bell graphs themselves are formed from pentagons and $n$-gons, and early examples beyond the icosahedron occurred in L\"{o}bells construction of certain hyperbolic 3-manifolds. They were further studied by Vesnin in \cite{Vesnin87}.
\end{defn}

\begin{lem}\label{lem:LobellGraphsWin}
Every dual L\"{o}bell graph of degree $n \geq 5$ has a legal system.
\end{lem}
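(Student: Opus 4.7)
Label the vertices of the dual L\"obell graph $\Gamma$ of degree $n\ge 5$ as $V=\{a_U,a_L\}\cup\{u_0,\dots,u_{n-1}\}\cup\{l_0,\dots,l_{n-1}\}$, where the edges are the two $n$-cycles $u_0u_1\cdots u_{n-1}u_0$ and $l_0l_1\cdots l_{n-1}l_0$, the apex edges $a_Uu_i$ and $a_L l_j$ for all $i,j$, and the antiprism edges $u_il_i,\ u_il_{i+1}$ (indices mod $n$). I aim to exhibit an explicit legal system on $\Gamma$ by producing a strongly legal state $S_0$ and a family of moves whose generated orbit remains inside the set of strongly legal states.

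For the initial state, choose an integer $k$ with $2\le k\le n-2$ (e.g.\ $k=\lfloor n/2\rfloor$; this uses $n\ge 5$) and set
\[
S_0\;=\;\{a_U\}\cup\{u_0,u_1,\dots,u_{k-1}\}\cup\{l_k,l_{k+1},\dots,l_{n-1}\}\cup\{a_L\}.
\]
This ``hemispherical'' state is strongly legal: $S_0$ is spanned by the path $a_U - u_0-\cdots-u_{k-1}-l_k-\cdots-l_{n-1}-a_L$ using the antiprism edge $u_{k-1}l_k$, while $V\setminus S_0=\{u_k,\dots,u_{n-1},l_0,\dots,l_{k-1}\}$ is spanned by $u_k-u_{k+1}-\cdots-u_{n-1}-l_0-l_1-\cdots-l_{k-1}$ using the antiprism edge $u_{n-1}l_0$. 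Each apex has cycle-neighbors on both sides because $1\le k\le n-1$, and each cycle vertex has an antiprism neighbor on the opposite side.

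For the system of moves, I would exploit the $\mathbb{Z}/n$ rotational symmetry of $\Gamma$ and set
\[
m_{a_U}=m_{a_L}=\{a_U,a_L\},\qquad m_{u_i}=\{u_i,u_{i+k}\},\qquad m_{l_j}=\{l_j,l_{j+k}\}.
\]
Each is a valid move: $u_i\not\sim u_{i+k}$ and $l_j\not\sim l_{j+k}$ because $2\le k\le n-2$, and $a_U\not\sim a_L$. The product $m_{u_i}m_{l_i}$ converts a hemispherical state $S_0(i,k)$ into its neighbor $S_0(i+1,k)$, so iterating produces the full rotational family of hemispherical configurations, all of which are strongly legal by the same argument as $S_0$.

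\textbf{The main obstacle} is that the group $M$ generated by all the $m_{u_i},m_{l_j}$ together with $m_{a_U}$ is far larger than the cyclic rotation subgroup, so the orbit $M\cdot S_0$ will contain many non-hemispherical intermediate states that must also be verified strongly legal. The required structural lemma is that every $w\in M$ acts on $U$ (respectively $L$) by a $\mathbb{Z}_2$-combination of the shift-by-$k$ involutions, so the restriction $S\cap U$ of any $S\in M\cdot S_0$ is a union of $\gcd(k,n)$-many arcs ``compatibly shifted'' with the restriction $S\cap L$, in such a way that the antiprism band always supplies at least one edge linking $S\cap U$ to $S\cap L$ (and likewise for the complement). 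Connectivity and strong legality then follow uniformly, and the hypothesis $n\ge 5$ is exactly what lets us choose $k$ in the valid range $2\le k\le n-2$ while keeping the linking antiprism edges distinct from the boundary of the arcs. If this uniform argument proves too delicate, a fallback is to choose $k$ with $\gcd(k,n)=1$, in which case $\langle m_{u_i}\rangle$ acts transitively on $U$-subsets of fixed parity and the orbit analysis reduces to a single combinatorial invariant.
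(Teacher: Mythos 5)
Your proposed system of moves is not legal: the ``structural lemma'' you defer is false, and the orbit $M\cdot S_0$ genuinely contains disconnected states. The problem is that your moves pair vertices \emph{within} the same ring. The generators $m_{u_i}=\{u_i,u_{i+k}\}$, viewed in $\integers_2^{U}$, are the edge-vectors of the circulant graph on the index set with edges $\{i,i+k\}$; when $\gcd(k,n)=1$ this graph is a single $n$-cycle, so these vectors span the entire even-weight subspace of $\integers_2^U$. Hence the orbit of $S_0\cap U=\{u_0,\dots,u_{k-1}\}$ is \emph{every} subset of $U$ of the same parity, and likewise for $L$, independently. Concretely, for $n=5$, $k=2$, the state $\{u_0,u_2,l_0\}$ lies in the orbit of $S_0$ (reach $\{u_0,u_2\}$ on the top ring, $\{l_0\}$ on the bottom ring, and toggle both apexes off with $m_{a_U}$), and it is disconnected since $u_2$ is adjacent to neither $u_0$ nor $l_0$. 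Your fallback of taking $\gcd(k,n)=1$ makes this worse, not better: transitivity on all fixed-parity subsets guarantees such degenerate states occur. The difficulty is intrinsic to any system whose moves live inside a single ring; you need the two halves of each move to sit on opposite rings so that the orbit is constrained to transversals.

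That is exactly what the paper does. It colors the vertices in $n+1$ pairs $\{v_i,w_i\}$ with $v_i$ on the top ring and $w_i$ on the bottom ring, after shifting indices so that $w_i$ is adjacent to $v_{i-2}$ and $v_{i-1}$ (making $\{v_i,w_i\}$ non-adjacent, hence a valid move), and it merges the two colors $c_1$ and $c_{\lceil n/2\rceil}$ into a single four-element move. Every state in the resulting orbit then contains exactly one vertex of each color, so $|S\cap(V\cup W)|$ is fixed and, crucially, neither ring is ever entirely inside $S$ or its complement; a ``maximal arc'' argument then connects each arc of $S$ on one ring to a vertex of $S$ on the other ring, and the apex vertex in $S$ ties everything together. (The paper also treats $n=5$, the icosahedron, by a separate six-coloring, since the index shift needs $n\geq 6$ for the merged move to be valid.) If you want to salvage your setup, replace your intra-ring pairs by cross-ring pairs of non-adjacent vertices and check that the resulting transversal orbit is legal; the hemispherical state $S_0$ and the rotation observation alone do not control the full orbit.
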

\begin{proof}
If $n=5$ the dual L\"obell graph is the icosahedron and has a legal system described in Section~\ref{sub:icosahedron}.
Suppose $n \geq 6$.
Consider the following coloring of the dual L\"obell graph with $n+1$ colors having two vertices of each color:
\begin{itemize}
\item the vertices $\{v_*,w_*\}$ that are dual to the  $n$-gonal faces have color $c_*$,
\item let $v_1,\dots, v_n$ and $w_1,\dots , w_n$ denote consecutively the vertices adjacent to $v_*$ and $w_*$. And ``shift'' the subscripts  so that $w_i$ is adjacent to $v_{i-2}$ and $v_{i-1}$ for each $i$ (mod $n$). Each pair $\{v_i,w_i\}$ has color  $c_i$.
See Figure~\ref{fig:lobell coloring}.
\end{itemize}
There is a move corresponding to $c_*$. There is a move corresponding to $c_i$ for each $i \notin \{1, \lceil \frac{n}{2} \rceil\}$.
There is also a move corresponding to the four vertices colored by $c_1$ or $c_{\lceil\frac{n}{2}\rceil}$.
This last move is valid as $c_1$-vertices are not adjacent to $c_{\lceil\frac{n}{2}\rceil}$-vertices  for $n\geq 6$ because of our subscript shift.
Let $S$ be a state containing exactly one $c_*$-vertex, exactly one $c_i$-vertex  for each $i\notin\{1,
{\lceil\frac{n}{2}\rceil}\}$, and also containing either $v_{1}$ and $w_{\lceil\frac{n}{2}\rceil}$ or $v_{\lceil\frac{n}{2}\rceil}$
and $w_{1}$.
We now verify that $S$ and the complement $\bar S$ of $S$ are connected.
Let  $V=\{v_1,\dots, v_n\}$ and $W=\{w_1,\dots , w_n\}$.
It suffices to show that each vertex $v\in V\cap S$ lies in the same connected component as some $w\in W\cap S$, and that each $w\in W\cap S$ lies in the same connected component as  some $v\in V\cap S$.
Indeed, since one $c_*$-vertex is in $S$,
either all the vertices in $V\cap S$ or all the vertices in $W\cap S$ are adjacent to it, and it follows that $S$ is connected.
Consider any maximal string $\underline v = \{v_i,v_{i+1}, \dots, v_k\}\subset V$ (with indices mod $n$) that is in $S$. We have $\underline v\subsetneq V$, since only one of $v_1,v_{\lceil\frac n 2 \rceil}$ is in $S$. Note that $w_{k+1}$ is in $S$, since $v_{k+1}\in \bar S$ by maximality of $\underline v$. Since $w_{k+1}$ is adjacent to $v_k$ we conclude that  each vertex of $\underline v$ lies in the connected component of some vertex in $W\cap S$. Similarly consider a maximal string $\underline w = \{w_i,\dots w_k\}\subset W$ that is in $S$ and now $v_{i-1}$ is in $S$ by maximality of $\underline w$. Since $v_{i-1}$ is adjacent to $w_{i}$, we conclude that each vertex of $W\cap S$ lies in a connected component of some  vertex in $V\cap S$. We verify that $\bar S$ is connected in the exact same way.
Let $S_0=\{v_*, w_1, v_2, v_3,\ldots, v_n\}$.
Then each element of the orbit of $S_0$ using the above moves is legal and has legal complement.
\end{proof}

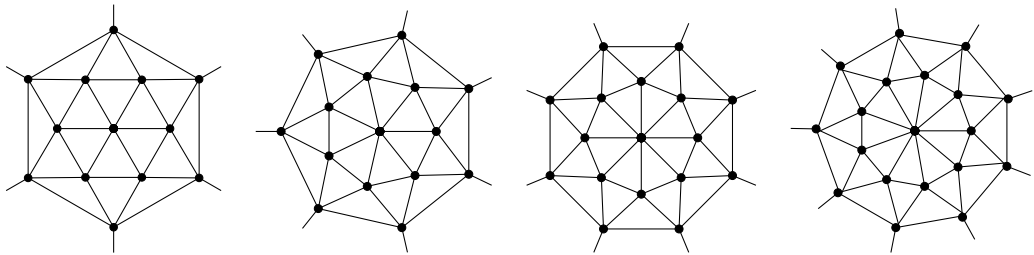
\begin{figure}
\begin{tikzpicture}[scale=0.75]
\tikzstyle{every node}=[circle, draw, fill,
                        inner sep=1pt, minimum width = 2pt]
\foreach \s in {0,...,5}
{
	\draw (0:0) node {};
	\draw (0:0) to (\s*60:1) node {} to (\s*60+60:1);
	\draw (\s*60-30:2.2) to (\s*60-30:1.75) node {} to (\s*60+30:1.75);
	\draw (\s*60-30:1.75) to (\s*60:1) to (\s*60+30:1.75);
}
\end{tikzpicture}\;\;\;
\begin{tikzpicture}[scale=0.75]
\tikzstyle{every node}=[circle, draw, fill,
                        inner sep=1pt, minimum width = 3pt]
\foreach \s in {0,...,6}
{
	\draw (0:0) node {};
	\draw (0:0) to (\s*360/7:1) node {} to (\s*360/7+360/7:1);
	\draw (\s*360/7-180/7:2.2) to (\s*360/7-180/7:1.75) node {} to (\s*360/7+180/7:1.75);
	\draw (\s*360/7-180/7:1.75) to (\s*360/7:1) to (\s*360/7+180/7:1.75);
}
\end{tikzpicture}\;\;\;
\begin{tikzpicture}[scale=0.75]
\tikzstyle{every node}=[circle, draw, fill,
                        inner sep=1pt, minimum width = 3pt]
\foreach \s in {0,...,7}
{
	\draw (0:0) node {};
	\draw (0:0) to (\s*360/8:1) node {} to (\s*360/8+360/8:1);
	\draw (\s*360/8-180/8:2.2) to (\s*360/8-180/8:1.75) node {} to (\s*360/8+180/8:1.75);
	\draw (\s*360/8-180/8:1.75) to (\s*360/8:1) to (\s*360/8+180/8:1.75);
}
\end{tikzpicture}\;\;\;
\begin{tikzpicture}[scale=0.75]
\tikzstyle{every node}=[circle, draw, fill,
                        inner sep=1pt, minimum width = 3pt]
\foreach \s in {0,...,8}
{
	\draw (0:0) node {};
	\draw (0:0) to (\s*360/9:1) node {} to (\s*360/9+360/9:1);
	\draw (\s*360/9-190/9:2.2) to (\s*360/9-190/9:1.75) node {} to (\s*360/9+190/9:1.75);
	\draw (\s*360/9-190/9:1.75) to (\s*360/9:1) to (\s*360/9+190/9:1.75);
}
\end{tikzpicture}
\caption{L\"{o}bell graphs of degree $6$,$7$,$8$ and $9$. In each graph there is one vertex ``at infinity'' not illustrated in the figure.}\label{fig:lobell}\end{figure}

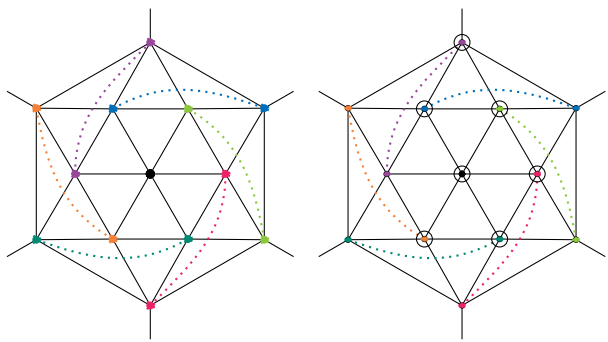
\begin{figure}
\[
\begin{tikzpicture}
\tikzstyle{every node}=[circle, draw, fill,
                        inner sep=1pt, minimum width = 3pt]
\foreach \s in {0,...,5}
{
	\draw (0:0) node {};
	\draw (0:0) to (\s*60:1) to (\s*60+60:1);
	\draw (\s*60-30:2.2) to (\s*60-30:1.75)  to (\s*60+30:1.75);
	\draw (\s*60-30:1.75) to (\s*60:1) to (\s*60+30:1.75);
}
\draw[WildStrawberry,dotted, thick] (0:1) node {} to[out=270, in=45] (270:1.75) node {};
\draw[LimeGreen,dotted,thick] (60:1) node {} to[out= -30,in=100] (330:1.75) node {};
\draw[NavyBlue,dotted,thick] (120:1) node {} to[out=30, in=160] (30:1.75) node {};
\draw[Purple,dotted, thick] (180:1) node {} to[out=90, in=225] (90:1.75) node {};
\draw[Orange,dotted,thick] (240:1) node {} to[out = 150, in = 280] (150:1.75) node {};
\draw[PineGreen,dotted,thick] (300:1) node {} to[out = 210, in = -20] (210:1.75) node {};
\end{tikzpicture}
\;\;\;
\begin{tikzpicture}
\tikzstyle{every node}=[circle, draw, fill,
                        inner sep=0.5pt, minimum width = 2pt]
\foreach \s in {0,...,5}
{
	\draw (0:0) node {};
	\draw (0:0) to (\s*60:1) node {} to (\s*60+60:1);
	\draw (\s*60-30:2.2) to (\s*60-30:1.75) node {} to (\s*60+30:1.75);
	\draw (\s*60-30:1.75) to (\s*60:1) to (\s*60+30:1.75);
}

\draw[WildStrawberry,dotted, thick] (0:1) node {} to[out=270, in=45] (270:1.75) node {};
\draw[LimeGreen,dotted,thick] (60:1) node {} to[out= -30,in=100] (330:1.75) node {};
\draw[NavyBlue,dotted,thick] (120:1) node {} to[out=30, in=160] (30:1.75) node {};
\draw[Purple,dotted, thick] (180:1) node {} to[out=90, in=225] (90:1.75) node {};
\draw[Orange,dotted,thick] (240:1) node {} to[out = 150, in = 280] (150:1.75) node {};
\draw[PineGreen,dotted,thick] (300:1) node {} to[out = 210, in = -20] (210:1.75) node {};
	\draw (0:0) circle (3pt);
	\draw (0:1) circle (3pt);
	\draw (60:1) circle (3pt);
	\draw (120:1) circle (3pt);
	\draw (90:1.75) circle (3pt);
	\draw (240:1) circle (3pt);
	\draw (300:1) circle (3pt);
\end{tikzpicture}
\]
\caption{\label{fig:lobell coloring}\label{fig:lobell system}At left is the coloring of the degree $6$ dual L\"{o}bell graph from the proof of Lemma~\ref{lem:LobellGraphsWin}. The vertex ``at infinity'' is paired with the central vertex. At right is an example of a legal state whose orbit is legal.}
\end{figure}


We recall the following result of Pogorelov characterizing 3-dimensional compact right-angled hyperbolic polyhedra  \cite{Pogorelov67}, which we restate in terms of Coxeter groups:

\begin{thm}\label{thm:pogorelov}
For a finite simplicial graph $\Gamma$, the group
$G(\Gamma)$ is a cocompact 3-dimensional right-angled hyperbolic reflection group if and only if
$\Gamma$ embeds in $S^2$ so that:
\begin{enumerate}
\item each region is a triangle;
\item each cycle of length $\leq4$ either bounds a triangle or two triangles meeting at an edge;
\item $\Gamma$ is not a triangle or a 4-clique.
\end{enumerate}
\end{thm}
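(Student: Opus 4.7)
The plan is to reduce the statement to Andreev--Pogorelov realization theory for compact 3-dimensional hyperbolic polytopes with prescribed dihedral angles, then translate between the reflection-group description and the combinatorial description of the fundamental polytope. The two directions are very different in character: the ``only if'' direction is an elementary curvature computation, while the ``if'' direction is where the real content sits.

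For the forward direction, suppose $G(\Gamma)$ is the reflection group of a compact right-angled polytope $P\subset\hyperbolic^3$, and take $\Gamma$ to be the $1$-skeleton of the polytope $P^*$ dual to $P$, embedded in $S^2=\boundary P$ in the obvious way: vertices of $\Gamma$ correspond to faces of $P$, and edges correspond to pairs of faces sharing an edge of $P$ (equivalently, to pairs of commuting reflections). For (1), the link in $P$ of a vertex is a spherical polygon with every interior angle equal to $\pi/2$, and Gauss--Bonnet forces such a polygon to be a triangle; so exactly three faces of $P$ meet at each vertex, and dually each region of the planar embedding of $\Gamma$ is a triangle. For (2), a $3$- or $4$-cycle in $\Gamma$ not bounding the described triangular configuration corresponds to a prismatic $3$- or $4$-circuit in $P$: a collection of pairwise adjacent faces whose common intersection pattern is not a vertex or edge of $P$. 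The totally geodesic cross-section dual to such a circuit would be a right-angled hyperbolic triangle or quadrilateral, with angle sum $\tfrac{3\pi}{2}$ or $2\pi$, contradicting negative curvature. Condition (3) excludes the two degenerate cases in which $P$ would have to be a simplex.

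For the converse, given $\Gamma\hookrightarrow S^2$ satisfying (1)--(3), let $P^*$ be the simplicial $3$-polytope whose $1$-skeleton is the given embedded $\Gamma$. Invoke Pogorelov's theorem \cite{Pogorelov67}: there exists a compact hyperbolic polytope $P$ combinatorially dual to $P^*$ with every dihedral angle equal to $\pi/2$. The hypotheses (1)--(3) are precisely the realizability conditions of Andreev--Pogorelov specialized to the all-right-angle case --- condition (1) ensures each vertex is trivalent, condition (2) rules out the forbidden prismatic $3$- and $4$-circuits, and condition (3) avoids the spherical simplex. The reflection group generated by reflections in the faces of this $P$ has Coxeter presentation $\Gamma$, hence equals $G(\Gamma)$; since $P$ is a compact fundamental domain, the action is cocompact on $\hyperbolic^3$.

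The main obstacle is the realization half; I would quote Pogorelov's theorem rather than reprove it, since its proof involves the deformation-theoretic analysis of hyperbolic polyhedral metrics originating with Andreev. Everything else is a routine translation between $P$ and its dual combinatorial structure, together with the curvature comparisons for prismatic circuits sketched above.
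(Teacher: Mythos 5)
Your proposal is correct, and it coincides with the paper's treatment: the paper offers no proof of Theorem~\ref{thm:pogorelov} at all, simply recalling it as Pogorelov's theorem \cite{Pogorelov67}, which is exactly the result you invoke for the existence direction. The extra material you supply --- the Gauss--Bonnet computation forcing trivalent vertices, the prismatic-circuit angle-sum contradictions for the necessity direction, and the translation between the polytope and its dual graph --- is the standard argument and is sound.
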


It follows that each vertex has valence $\geq 5$ since a low valence vertex would lead to a contradiction. For instance, a valence~$4$ vertex would be surrounded by a 4-cycle,
which would then have to bound a pair of triangles meeting an edge, in which case there would be a triangle not bounding a region.

We refer to graphs satisfying the conditions in Theorem~\ref{thm:pogorelov} as \emph{Pogorelov graphs}.
Inoue gave a recursive construction of all (duals of) Pogorelov graphs \cite{Inoue08}.
Expressing his result in terms of duals,  the base case consists of the dual L\"{o}bell graphs.
His structural inductions are of two types:
\begin{enumerate}
\item
Combine two Pogorelov graphs by removing a valence~$n$ vertex from each and then gluing
together along an $n$-antiprism as on the top of Figure~\ref{fig:Inoue}

\item
Expand a vertex of a Pogorelov graph to an edge whose vertices each have valence $\geq 5$ as in the bottom of Figure~\ref{fig:Inoue}.
\end{enumerate}

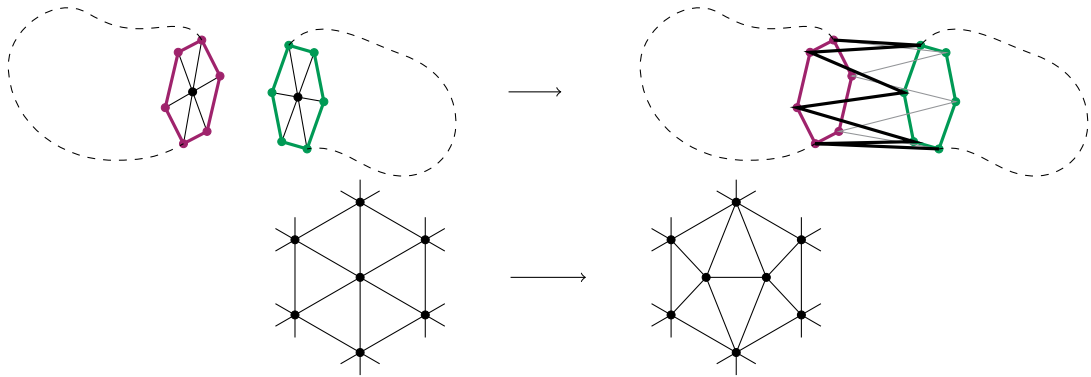
\begin{figure}
\begin{tikzpicture}[scale = 0.7]
\tikzstyle{every node}=[circle, draw, fill,
                        inner sep=0pt, minimum width = 3pt]
\draw[->] (6,0) -- (7,0);
\begin{scope}
\coordinate (o) at (0,0);
\coordinate (a) at (80:1);
\coordinate (d) at (260:1);
\coordinate (b) at (110:0.8);
\coordinate (e) at (290: 0.8);
\coordinate (c) at (210:0.6);
\coordinate (f) at (30:0.6);
\node at (o) {};
\foreach \x in {a,b,c,d,e,f}
{
\draw (o) -- (\x) node[RedViolet] {};
}
\draw[very thick, RedViolet] (a) -- (b) -- (c) -- (d) -- (e) -- (f) -- (a);
\draw[dashed] (a) to[out = 120, in = 0] (-1,1.2) to [out =180, in = -30] (-2,1.5) to [out = 150, in =90]  (-3.5,0.5)
 to [out = 270, in = 180] (-1.5, -1.3) to [out = 0, in =210] (d);
 \end{scope}
 \begin{scope}[shift = {(2,-0.1)}]
 \coordinate (o') at (0,0);
\coordinate (a') at (100:1);
\coordinate (d') at (280:1);
\coordinate (b') at (70:0.9);
\coordinate (e') at (250: 0.9);
\coordinate (c') at (350:0.5);
\coordinate (f') at (170:0.5);
\node at (o') {};
\foreach \x in {a',b',c',d',e',f'}
{
\draw (o') -- (\x) node[ForestGreen] {};
}
\draw[very thick, ForestGreen] (a') -- (b') -- (c') -- (d') -- (e') -- (f') -- (a');
\draw[dashed] (a') to[out =50, in =160 ] (2,0.8) to[out =-20, in = 90] (3,-.5) to[out =270, in = 0] (2,-1.5) to[out =180, in = 10] (d');
 \end{scope}
\begin{scope}[shift = {(12,0)}]
\coordinate (o) at (0,0);
\coordinate (a) at (80:1);
\coordinate (d) at (260:1);
\coordinate (b) at (110:0.8);
\coordinate (e) at (290: 0.8);
\coordinate (c) at (210:0.6);
\coordinate (f) at (30:0.6);
\foreach \x in {a,b,c,d,e,f}
{
\node[RedViolet] at (\x) {};
}
\draw[very thick, RedViolet] (a) -- (b) -- (c) -- (d) -- (e) -- (f) -- (a);
\draw[dashed] (a) to[out = 120, in = 0] (-1,1.2) to [out =180, in = -30] (-2,1.5) to [out = 150, in =90]  (-3.5,0.5)
 to [out = 270, in = 180] (-1.5, -1.3) to [out = 0, in =210] (d);
 \end{scope}
 \begin{scope}[shift = {(14,-0.1)}]
 \coordinate (o') at (0,0);
\coordinate (a') at (100:1);
\coordinate (d') at (280:1);
\coordinate (f') at (70:0.9);
\coordinate (c') at (250: 0.9);
\coordinate (e') at (350:0.5);
\coordinate (b') at (170:0.5);
\foreach \x in {a',b',c',d',e',f'}
{
\node[ForestGreen] at (\x) {};
}
\draw[very thick, ForestGreen] (a') -- (b') -- (c') -- (d') -- (e') -- (f') -- (a');
\draw[dashed] (a') to[out =50, in =160 ] (2,0.8) to[out =-20, in = 90] (3,-.5) to[out =270, in = 0] (2,-1.5) to[out =180, in = 10] (d');
 \end{scope}
 \draw[thin, Gray] (d')-- (e) -- (e') -- (f) -- (f') -- (a);
  \draw[very thick] (a) -- (a') -- (b) -- (b') -- (c) -- (c') -- (d) -- (d');

 \end{tikzpicture}

\begin{tikzpicture}
\tikzstyle{every node}=[circle, draw, fill,
                        inner sep=0pt, minimum width = 3pt]
\begin{scope}
\node at (0,0) {};
\foreach \x in {-30,30,90,150,210,270}
{
\draw (0,0) -- (\x:1) node {} -- (\x+60:1);
\draw[shift = {(\x:1)}] (0,0) -- (\x:0.3);
\draw[shift = {(\x:1)}] (0,0) -- (\x-60:0.3);
\draw[shift = {(\x:1)}] (0,0) -- (\x+60:0.3);
}
\end{scope}
\begin{scope}[shift = {(2,0)}]
\draw [->] (0,0) to (1,0);
\end{scope}
\begin{scope}[shift = {(5,0)}]
\coordinate (o1) at (-.4,0);
\coordinate (o2) at (.4,0);
\draw (o1) node {} -- (o2) node {};
\foreach \x in {150,210}
{
\draw (o1) -- (\x:1) node {} -- (\x+60:1);
\draw[shift = {(\x:1)}] (0,0) -- (\x:0.3);
\draw[shift = {(\x:1)}] (0,0) -- (\x-60:0.3);
\draw[shift = {(\x:1)}] (0,0) -- (\x+60:0.3);
}
\foreach \x in {30, -30}
{
\draw (o2) -- (\x:1) node {} -- (\x+60:1);
\draw[shift = {(\x:1)}] (0,0) -- (\x:0.3);
\draw[shift = {(\x:1)}] (0,0) -- (\x-60:0.3);
\draw[shift = {(\x:1)}] (0,0) -- (\x+60:0.3);
}
\foreach \x in {90, 270}
{
\draw (o1) -- (\x:1) node {};
\draw (o2) -- (\x:1) -- (\x+60:1);
\draw[shift = {(\x:1)}] (0,0) -- (\x:0.3);
\draw[shift = {(\x:1)}] (0,0) -- (\x-60:0.3);
\draw[shift = {(\x:1)}] (0,0) -- (\x+60:0.3);
}
\end{scope}
\end{tikzpicture}
\caption{The two Inoue structural induction moves.}
\label{fig:Inoue}
\end{figure}

It is tempting to try to apply our method to every cocompact right-angled reflection group in
 $\hyperbolic^3$ by affirmatively solving the following:
\begin{prob}\label{prob:Pogorelov Duals Win}
Does  every Pogorelov graph have a legal system?
\end{prob}
As we have verified that the dual L\"{o}bell graphs have legal systems in Lemma~\ref{lem:LobellGraphsWin},
in view of Inoue's result, it is conceivable that one might be able to approach Problem~\ref{prob:Pogorelov Duals Win} by structural induction. However, we caution that the corresponding problem for finite covolume reflection groups has a negative solution,
as indicated by  Theorem~\ref{thm:DWtoHam} and the examples discussed immediately afterwards.

\subsection{Characterizing finite volume 3-dimensional right-angled hyperbolic reflection groups}

Let $\Gamma\subset S^2$ be a graph embedded in the sphere.
We use the term \emph{quad} for a full 4-cycle in $\Gamma$ that bounds a region of $S^2$.

\begin{prop}[Cusped 3-dimensional hyperbolic reflection groups]\label{prop:right angled reflection}
Let $\Gamma\subset S^2$ be a simplicial connected graph embedded in the sphere.
Let $G=G(\Gamma)$ be the associated right-angled Coxeter group.
Then $G$ is isomorphic to a finite co-volume cusped hyperbolic reflection group precisely
if the following conditions hold:
\begin{enumerate}
\item\label{cuspitem:1} Each region is bounded by a quad or triangle (the quads generate the cusps).
\item\label{cuspitem:2} The intersection of distinct quads is either empty, a vertex, or an edge.
\item\label{cuspitem:3} Any cycle of length~$\leq 4$ bounds a region or the union of two triangular regions.
\item\label{cuspitem:4} $\Gamma$ is not a 
triangle, $4$-clique, $4$-cycle, or cone on a $4$-cycle.
\end{enumerate}
\end{prop}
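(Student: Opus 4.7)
The plan is to prove both directions by translating between the combinatorics of $\Gamma$ and the geometry of a finite covolume right-angled polyhedron $P\subset \hyperbolic^3$, invoking the cusped extension of Andreev's theorem (which generalizes the compact case encoded in Theorem~\ref{thm:pogorelov} to allow ideal vertices).

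For the forward direction, assume $G=G(\Gamma)$ is the reflection group generated by reflections in the codimension-one faces of such a $P$. The vertices of $\Gamma$ correspond to faces of $P$, edges of $\Gamma$ to pairs of adjacent faces, and regions of $\Gamma\subset S^2$ to vertices of $P$. A finite vertex of $P$ has spherical link in which the right angles sum to $3\pi/2<2\pi$, forcing exactly three faces to meet there and producing a triangular region. An ideal vertex has horospherical cross-section that is a Euclidean polygon with all right angles, hence a rectangle, producing a quadrilateral region; together these give \eqref{cuspitem:1}. Condition \eqref{cuspitem:2} reflects the fact that distinct ideal vertices of $P$ share at most one edge. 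Condition \eqref{cuspitem:3} is the standard hyperbolic convexity obstruction: a cycle of length $3$ or $4$ in $\Gamma$ failing to bound a triangular region or a pair of triangles would correspond to a \emph{prismatic circuit} of faces of $P$, which cannot occur in a finite covolume right-angled hyperbolic polyhedron. Condition \eqref{cuspitem:4} eliminates the degenerate cases where $G$ would be finite, reducible, or virtually abelian of low rank.

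For the converse, assume $\Gamma$ satisfies \eqref{cuspitem:1}--\eqref{cuspitem:4}. Dualize to build an abstract combinatorial $3$-polyhedron $P^\Gamma$: its $2$-faces are indexed by vertices of $\Gamma$, its edges by edges of $\Gamma$, and its vertices by the regions of $\Gamma\subset S^2$, with each triangular region labeled as a trivalent finite vertex and each quadrilateral region labeled as a $4$-valent ideal vertex. Prescribe all dihedral angles of $P^\Gamma$ to be $\pi/2$. I would then apply the cusped Andreev theorem: under the duality, the hypothesis of no prismatic $3$- or $4$-circuit becomes \eqref{cuspitem:3}; the compatibility conditions on ideal vertices become \eqref{cuspitem:2}; the prescribed valences at the two vertex types become \eqref{cuspitem:1}; and the non-degeneracy hypotheses become \eqref{cuspitem:4}. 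The theorem yields a geometric realization of $P^\Gamma$ as a finite covolume right-angled convex polyhedron in $\hyperbolic^3$, whose reflection group is canonically identified with $G(\Gamma)$.

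The main obstacle will be checking that our graph-theoretic formulation matches the hypotheses of Andreev's theorem in the cusped setting exactly, especially the non-degeneracy case analysis in \eqref{cuspitem:4}: the cone on a $4$-cycle is the subtlest exclusion, since its dual is a square prism whose reflection group splits as a direct product and so fails to be irreducibly hyperbolic, even though it superficially satisfies \eqref{cuspitem:1}--\eqref{cuspitem:3}. A secondary delicate point in the forward direction is the identification of right-angled ideal vertices as exactly $4$-valent, which uses the fact that a Euclidean polygon with all interior angles $\pi/2$ is forced to be a rectangle and so has exactly four sides.
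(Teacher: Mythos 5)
Your proof is correct in outline, but it is not the route the paper takes: you are essentially reconstructing the classical Andreev-type characterization, which the authors explicitly cite (Andreev, corrected by Hodgson using Rivin's thesis, the Roeder--Hubbard--Dunbar proof, and Thurston's orbifold argument) and then deliberately sidestep. The paper's forward direction never invokes an existence theorem for polyhedra with prescribed dihedral angles. Instead it first uses Conditions \eqref{cuspitem:2}--\eqref{cuspitem:4} to rule out full subgraphs of the form $\{b_1,b_2\}\star\{c_1,c_2,c_3\}$, applies Caprace's criterion (via Corollary~\ref{cor:rel hyp fin vol}) to conclude $G$ is hyperbolic relative to its quads, then builds a $3$-manifold directly: $G$ acts by reflections on a simply connected $3$-manifold with fundamental domain a ball dual to $\Gamma$, a torsion-free finite index subgroup yields a manifold whose quad-vertex links are tori, and after truncating one argues via the JSJ decomposition (no splitting over finite or virtually $\integers^2$ subgroups, no Seifert piece) that Thurston's geometrization forces a finite-volume hyperbolic structure. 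The converse is likewise group-theoretic: each condition is verified by an obstruction (negative Euler characteristic, infinite intersection of maximal virtually $\integers^2$ subgroups, splittings, virtual abelianness) rather than by reading off vertex links of a polyhedron. What your approach buys is a direct, self-contained geometric realization and an explicit dictionary between regions of $\Gamma$ and finite/ideal vertices; what the paper's approach buys is that it stays inside the geometric-group-theory framework already developed (the relative hyperbolicity statement is needed independently) and avoids having to verify the exact hypotheses of the cusped Andreev theorem, which is precisely the delicate matching step you flag as your main obstacle. Both arguments ultimately lean on deep input: yours on Andreev--Rivin, the paper's on geometrization.
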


A simple example is where $\Gamma$ is the 1-skeleton of a cube as discussed in Example~\ref{ex:cube}. It is dual to a right-angled hyperbolic ideal octahedron, which is the fundamental domain of the reflection group. Another simple example is the $1$-skeleton of a triangular prism.
\begin{com}it has an easy legal system with three moves:
each move has one vertex on each triangle.
a start state is a triangle.
one notes that the union of two vertices of different colors contains the third vertex in its 1-nbhood,
and that if they aren't connected then this third vertex connects them...
\end{com}

Note that $G$ is virtually abelian in each of the cases listed in Condition~\eqref{cuspitem:4}. In the presence of Conditions~\eqref{cuspitem:1}-\eqref{cuspitem:3}, Condition~\eqref{cuspitem:4} is equivalent to requiring that $\Gamma$ has at least 6 vertices. To establish this equivalence the reader can consider all 32 connected simplicial graphs with at most $5$ vertices.
\begin{com}
Most cases are excluded by having a region with at least 5 sides or 3 triangles that share an edge.
\end{com}

Note that the final two conditions together imply that  no vertex is surrounded by four or fewer triangles.

This is a highly simplified version of known results, and we are grateful to Igor Rivin for tracing these references for us. A classical reference is to Andre'ev's paper \cite{Andreev70} which was corrected by Hodgson \cite{Hodgson90} using \cite{RivinPhD}. Another proof was given by Hubbard-Roeder-Dunbar \cite{RoederHubbardDunbar2007} along Andre'ev's lines. Finally there is an orbifold proof by Thurston in
\cite{ThurstonNotes}.
We instead give an explanation in the context of geometric group theory, that also depends on hyperbolization:

\begin{proof}
We now show that if $\Gamma$ satisfies Conditions~\eqref{cuspitem:1} - \eqref{cuspitem:4} then $G$ is hyperbolic relative to its quads. Suppose $\Gamma$ contained a full subgraph of the form $B\star C = \{b_1,b_2\}\star \{c_1,c_2,c_3\}$
where $c_1,c_3$  and $b_1,b_2$ are nonadjacent.
Then $b_1c_1b_2c_3$
is a full 4-cycle
and is therefore a quad by Condition~\eqref{cuspitem:3}.
Hence neither
 $b_1c_1b_2c_2$ nor $b_1c_2b_2c_3$ are quads by Condition~\eqref{cuspitem:2}. Hence $c_1,c_2$ and $c_2,c_3$ are adjacent,  and hence $\Gamma$ is the $c_2$-cone on the quad $b_1c_1b_2c_3$ which violates Condition~\eqref{cuspitem:4}.
Observe that each full 4-cycle is a quad by Condition~\eqref{cuspitem:3}. It follows from Corollary~\ref{cor:rel hyp fin vol} that $G$ is hyperbolic relative to its quads.

We now verify that  $G$ is a finite volume hyperbolic reflection group.
Regard $G$ as acting by reflections on a simply-connected 3-manifold $\widetilde M$ with fundamental domain a ball
whose boundary is the cell structure on $S^2$ that is dual to $\Gamma$.
The stabilizer of each vertex is the right-angled reflection group associated to a region of $\Gamma$ and note that by Condition~\eqref{cuspitem:1} this is either $(\integers_2*\integers_2)^2$ or $\integers_2^3$.
Let $G'$ be a torsion-free finite index subgroup. The quotient $G'\backslash \widetilde M$ has the property that the link of each quad vertex is a torus, and we can thus remove a finite neighborhood from each such vertex to obtain
a manifold $\bar M$ whose boundary is a union of tori.
Note that  $G$ does not split along a finite group since $\Gamma$ is connected and has no internal $3$-cycles by Condition~\eqref{cuspitem:3}. \begin{com} If it split along a finite group, then it would have a finite index subgroup that split as a nontrivial free product. There would then be a sphere  or compression disk corresponding to the splitting, and this would contradict that the 3-manifold is aspherical or that the boundary of a core is incompressible.\end{com}
Note that $\bar M$ is not a ball or a thickened torus by the excluded degenerate possibilities for $\Gamma$.
Our peripheral structure consists entirely of the conjugates of subgroups commensurable with the $G(Q)$ where $Q$ varies over the quads, and thus the  JSJ decomposition for $\bar M$ cannot have a Seifert-fibered piece,
and must consist of a single piece for otherwise $\pi_1\bar M$ would split over $\integers^2$. We conclude that $M$ is a finite-volume hyperbolic manifold by Thurston's geometrization.

For the converse, assume that $G$ is a finite volume hyperbolic reflection group.
Condition~\eqref{cuspitem:1} holds since if there were some region with more than 4 sides,
 then $\euler(G')<0$ where $G'$ is a torsion-free finite index subgroup.
Condition~\eqref{cuspitem:2} holds since otherwise there would be maximal virtually $\integers^2$ subgroups
with infinite intersection.
Condition~\eqref{cuspitem:3} holds since otherwise $G$ would split over a virtually $\integers^2$ or a finite group associated to a full 4-cycle or 3-cycle that does not bound a region.
\begin{com}reference: finite volume hyperbolic reflection group does not split along a virtually $\integers^2$ or a finite group
\end{com}
 Condition~\eqref{cuspitem:4} holds since $G$ contains a rank two free subgroup but each of the degenerate cases is virtually abelian.
 \end{proof}


The \emph{join} $A\star B$ of two nonempty graphs, is the graph consisting of $A\sqcup B$ together with an edge from each vertex of $A$ to each vertex of $B$. A \emph{clique} is a complete graph $K(n)$ where $n\geq 0$.

The following is a special case of a characterization given by Caprace \cite[Thm~A${}'$]{Caprace09,Caprace13}:
\begin{thm}\label{thm:caprace}
Let $\mathcal T$ be a collection of subgraphs of $\Gamma$. Then $G(\Gamma)$ is hyperbolic relative to
$\{ G( K ) \ : \ K \in \mathcal T \}$ if and only if the following conditions are satisfied:
\begin{enumerate}[(a)]
\item\label{caprace:RH1} For each non-clique subgraphs $J_1,J_2$ whose join $J_1 \star J_2$ is a full subgraph of $\Gamma$ there exists $K\in\mathcal T$ such that $J_1\star J_2\subset K$.
\item\label{caprace:RH2} For all $K_1,K_2\in\mathcal T$ with $K_1\neq K_2$, the intersection $K_1\cap K_2$ is a clique.
\item\label{caprace:erratum} For all $K\in \mathcal T$ and nonadjacent vertices $v_1,v_2\in K$,
 if $v$ is adjacent to both $v_1$ and $v_2$ then $v\in K$.
\end{enumerate}
\end{thm}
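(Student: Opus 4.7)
The plan is to deduce this from Caprace's general characterisation of relative hyperbolicity for Coxeter groups \cite[Thm.~A${}'$]{Caprace09, Caprace13}; no reproof of his result is attempted. The work is to verify that, in the right-angled setting, his hypotheses become exactly conditions (a), (b), (c).

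First I would set up the dictionary between $\Gamma$ and $G(\Gamma)$. Induced subgraphs $K \subseteq \Gamma$ are in bijection with standard parabolic subgroups $G(K) \leq G(\Gamma)$, and for any two induced subgraphs one has $G(K_1) \cap G(K_2) = G(K_1 \cap K_2)$. The subgroup $G(K)$ is finite exactly when $K$ is a clique, in which case $G(K) \cong \integers_2^{|V(K)|}$; equivalently, $K$ defines a spherical standard parabolic iff $K$ is a clique. A full join decomposition $K = J_1 \star J_2$ corresponds to a direct product $G(K) = G(J_1) \times G(J_2)$, and both factors are infinite precisely when neither $J_i$ is a clique. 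In a right-angled Coxeter system this is the only way a standard parabolic splits as a direct product of two standard subsystems.

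Next I would match Caprace's three hypotheses to (a), (b), (c). His requirement that every standard parabolic splitting as a product of two non-spherical standard parabolic factors lie in some peripheral translates, via the dictionary, to (a). His condition that distinct peripherals meet in a spherical subgroup becomes, after applying $G(K_1) \cap G(K_2) = G(K_1 \cap K_2)$, the requirement that $K_1 \cap K_2$ be a clique, which is (b). His parabolic-closure condition on each peripheral -- that $G(K)$ contain the parabolic closure of every non-commuting pair it already contains -- unpacks, in the right-angled case, to the statement that any common neighbour of non-adjacent $v_1, v_2 \in K$ lies in $K$, which is (c). Caprace's remaining hypotheses concerning infinite irreducible standard parabolics are either vacuous or redundant here: in a right-angled Coxeter group, any subsystem generated by non-commuting generators that would otherwise be forced into a peripheral is already captured by (a).

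The main obstacle is the faithful translation of condition (c): this is precisely the portion of Caprace's theorem amended between \cite{Caprace09} and its corrigendum \cite{Caprace13}, and one must match against the final form, not the original. Once the dictionary between induced subgraphs and standard parabolics is kept straight, the remainder of the argument is a direct specialisation.
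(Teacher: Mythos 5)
Your overall strategy -- specialize Caprace's Theorem~A$'$ via the dictionary between induced subgraphs and standard parabolic subgroups -- is the same as the paper's, and your translations of conditions (b) and (c) are correct. But there is a genuine gap in how you handle condition (a). You assert that Caprace's first hypothesis ``translates, via the dictionary, to (a),'' and you dismiss his irreducibility requirements as ``vacuous or redundant.'' That is not so: Caprace's condition only requires a peripheral $K$ to contain $J_1\star J_2$ when $J_1$ and $J_2$ are \emph{irreducible} (i.e.\ not themselves joins) and non-spherical, whereas condition (a) of the theorem drops the ``not a join'' restriction and therefore quantifies over strictly more pairs. This mismatch is harmless for the sufficiency direction (since (a) implies Caprace's weaker hypothesis), but it leaves the necessity direction incomplete: if $G(\Gamma)$ is relatively hyperbolic, Caprace only hands you coverage of joins of irreducible non-clique factors, and you must still prove that an arbitrary full join $J_1\star J_2$ of non-cliques lies in some $K\in\mathcal T$.

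The missing argument is exactly the nontrivial step in the paper's proof: write $J_i=A_i\star B_i$ with $A_i$ neither a join nor a clique (taking $B_i$ empty if $J_i$ is already irreducible), apply Caprace's condition to get $A_1\star A_2\subseteq K$ for some $K\in\mathcal T$, pick nonadjacent $x_i,y_i\in A_i$, and then use condition (c) to absorb each vertex $v_i$ of $B_i$ into $K$, since $v_i$ is adjacent to both $x_i$ and $y_i$. Without this use of (c), your claim that the conditions ``match exactly'' under the dictionary does not hold, and the ``only if'' direction of the theorem is not established. (Your parenthetical worry about matching the corrigendum's form of the parabolic-closure condition is well placed, but the irreducibility issue in the first condition is the one that actually requires work here.)
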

\begin{proof}
The three conditions of Theorem~\ref{thm:caprace} are straightforward simplifications of the corresponding
three conditions in \cite[Thm~A']{Caprace13} except that Condition~\eqref{caprace:RH1} is stated
with a weaker assumption that  $J_1,J_2$ are neither joins nor cliques. Thus Theorem~\ref{thm:caprace}
 is implied by Caprace's original statement.

To see that our variant implies Caprace's original version we argue as follows:
First note that if $J_1$ is a join, then it can be expressed as a join $A_1\star B_1$
 where $A_1$ is not a join or a clique. If $J_2$ is not a join then we let $A_2=J_2$,
 and otherwise we let $J_2=A_2\star B_2$ where $A_2$ is not a join or a clique.
 Condition~\eqref{caprace:RH1} implies that  $A_1\star A_2$ lies in some $K\in \mathcal T$.
 Let $x_i,y_i$ be non-adjacent vertices of $A_i$.  For each vertex $v_i$ of $B_i$,
  Condition~\eqref{caprace:erratum} applied to $v_i$ with $x_i,y_i\in K$
shows that $v_i\in K$.
\end{proof}


\begin{cor}\label{cor:rel hyp fin vol} Let $\Gamma$ be a simplicial graph.
 The right-angled Coxeter group $G=G(\Gamma)$ is hyperbolic relative to
 $\{G(Q)\ : \ Q \text{ is a full 4-cycle}\}$
 if and only if $\Gamma$ has no full subgraph $B\star C$ where $|V(B)|=2$ and $|V(C)|=3$ and $B,C$ are not cliques.
 \end{cor}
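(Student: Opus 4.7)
The plan is to apply Theorem~\ref{thm:caprace} with $\mathcal{T}$ equal to the family of (vertex sets of) full $4$-cycles in $\Gamma$. The forward direction is immediate: any full subgraph $B\star C$ with $|V(B)|=2$, $|V(C)|=3$, and neither $B$ nor $C$ a clique is a join of two non-cliques on $5$ vertices, so Condition~\eqref{caprace:RH1} would force it to lie inside some full $4$-cycle $K\in\mathcal{T}$, which is impossible since $|V(K)|=4$.

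For the converse, I assume $\Gamma$ contains no such forbidden subgraph and verify each of the three Caprace conditions. For~\eqref{caprace:RH1}, given non-cliques $J_1, J_2$ with $J_1\star J_2$ full in $\Gamma$, the key claim is that $|V(J_i)|=2$ for each $i$. Indeed, if $|V(J_1)|\geq 3$, then for any three vertices $c_1,c_2,c_3$ of $J_1$ and any non-adjacent pair $b,b'$ in $J_2$, the induced subgraph of $\Gamma$ on $\{b,b',c_1,c_2,c_3\}$ is the join $\{b,b'\}\star G$, where $G$ is the induced subgraph on $\{c_1,c_2,c_3\}$; if $G$ were not a clique, this would be the forbidden configuration. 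Hence every triple in $V(J_1)$ spans a clique of $J_1$, forcing $J_1$ itself to be a clique, contrary to assumption. Thus $|V(J_1)|=|V(J_2)|=2$, and since each $J_i$ is a non-clique, $J_1\star J_2$ is itself a full $4$-cycle that serves as the required $K$.

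For~\eqref{caprace:RH2}, suppose distinct full $4$-cycles $K_1$ and $K_2$ share non-adjacent vertices $x,y$. In each $K_i$, a non-adjacent pair is a diagonal, so I may write $K_1 = \{x,y,a_1,a_2\}$ and $K_2 = \{x,y,b_1,b_2\}$; because $K_1 \neq K_2$ have distinct vertex sets, some $b_j \notin K_1$, say $b_1$. Then $a_1, a_2$ are non-adjacent, so the induced subgraph of $\Gamma$ on $\{x,y,a_1,a_2,b_1\}$ is a forbidden $B\star C$. For~\eqref{caprace:erratum}, if $v_1, v_2$ are non-adjacent vertices of a full $4$-cycle $K=\{v_1,v_2,v_3,v_4\}$ and $u\notin K$ is adjacent to both, then $v_3, v_4$ form the other diagonal of $K$ and hence are non-adjacent, so the induced subgraph on $\{v_1,v_2,u,v_3,v_4\}$ is again a forbidden $B\star C$.

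The substantive step is the verification of~\eqref{caprace:RH1}: one must leverage the exclusion of a single $5$-vertex configuration to collapse both factors of $J_1\star J_2$ down to non-edges, after which the join is literally a full $4$-cycle. The remaining two Caprace conditions reduce directly to exhibiting the forbidden $B\star C$ whenever they fail.
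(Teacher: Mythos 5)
Your proof is correct and follows essentially the same route as the paper: apply Theorem~\ref{thm:caprace} with $\mathcal T$ the set of full $4$-cycles and, for each of the three conditions, extract a forbidden $B\star C$ from any violation. The only (immaterial) differences are that you rule out the forbidden configuration via Condition~\eqref{caprace:RH1} rather than \eqref{caprace:erratum} in the forward direction, and in verifying \eqref{caprace:RH2} you select a $3$-vertex set for $C$ directly, which is in fact slightly tidier than the paper's choice of the full complement of $\{u,v\}$ in $K_1\cup K_2$.
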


\begin{proof}
Suppose $\Gamma$ has a full subgraph $B\star C$ as above.
Let $b_1,b_2$ be the vertices of $B$. Let $c_1,c_2,c_3$ be the vertices of $C$ and assume $c_1,c_3$ are nonadjacent. Then $\{b_1,c_1,b_2,c_3\}$ are the vertices of a full 4-cycle $Q$. Thus $c_2\in Q$ by Condition~\eqref{caprace:erratum} of
Theorem~\ref{thm:caprace}, which is impossible.

Let $\mathcal T$ consist of all full $4$-cycles.

We now verify Condition~\eqref{caprace:RH1}. Suppose $J_1,J_2$ are non-clique subgraphs whose join is a full subgraph of $\Gamma$. Since $J_1,J_2$ are not cliques, they each have at least two vertices, and neither consists of a single edge.
If they both have two vertices then $J_1\star J_2$ is a full 4-cycle.
Otherwise, one contains a full subgraph $A$ consisting of two non adjacent vertices,
 and the other contains a full subgraph $B$ consisting of three vertices that is not a clique.
 Then $A\star B$ is a full subgraph of $\Gamma$ which contradicts our hypothesis.

If one of $K_1,K_2$ in Condition~\eqref{caprace:RH2} is a pair of non adjacent vertices then $|V(K_1\cap K_2)|\leq1$. Suppose $K_1,K_2$ are both $4$-cycles.
We will show that $K_1\cap K_2$ does not contain a pair of nonadjacent vertices, and so $K_1\cap K_2$ consists of a single edge or vertex, and is thus a clique.
 If $K_1\cap K_2$ contains a pair $u,v$ of nonadjacent vertices and let $B$ be the full graph spanned by $u,v$, and let $C$ be the graph spanned by the vertices of $K_1,K_2$ not in $B$. Note that $V(C)\geq 3$ since otherwise $K_1=K_2$. Since $C$ contains a pair of opposite vertices of each $K_i$ we see that $C$ is not a clique.
  Since $u,v$ are adjacent to each vertex of $C$ we see that $B\star C$ is a full subgraph of $\Gamma$ which contradicts our hypothesis.

We now verify Condition~\eqref{caprace:erratum}.
Suppose that $v$ is adjacent to vertices $v_1,v_2$,
and that $v_1,v_2$ are in some $K\in \mathcal T$ and $v_1,v_2$ are not adjacent.
Then either $v\in K$, or letting $v_3,v_4$ be the other vertices of $K$,
and letting $B,C$ be the full subgraphs whose vertices are $\{v_1,v_2\}$ and $\{v,v_3,v_4\}$
we see that $B\star C$ is a full subgraph of $\Gamma$, which is impossible.

\end{proof}

\section{Examples where the method fails}\label{sec:fails}
In Sections~\ref{sub:coning off} and \ref{sub:duals} we describe examples of graphs $\Gamma$ such that $\curvature(\Gamma)\geq 0$
but such that there is no legal system, because $\Gamma$ does not have a single legal state. We emphasize that not only does our method fail for these examples, but Bestvina-Brady Morse theory cannot be successfully applied to any finite cover of $X$.
The first class of examples are easy bipartite graphs described in Section~\ref{sub:coning off}.
In Section~\ref{sub:duals} we describe a second class of examples that are more complicated but have the advantage that they are planar.
In Section~\ref{sub:negative euler} we show that if $\Gamma$ has a legal system then
$\curvature_2(\Gamma) =1-v/2+e/4 \geq  0$.

\subsection{Bipartite Cones}\label{sub:coning off}
Consider the bipartite graph $\Lambda_{(m,k)}$ whose vertices are $N_m\sqcup C(m,k)$ where  $N_m = \{1,\ldots, m\}$
and $C(m,k)$ consists of all k-element subsets of $N_m$.
An edge joins a vertex $r\in N_m$ to each element of $C(n,k)$ containing $r$.

\begin{prop}
For $m\geq 2k-1$ and $k\geq 2$ the graph $\Lambda_{(m,k)}$ does not have a legal state.
\end{prop}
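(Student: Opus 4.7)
The plan is to show that any strongly legal state of $\Lambda_{(m,k)}$ forces an impossible partition of $N_m$; since every state in a legal orbit is strongly legal, this suffices to rule out legal systems (which is the content relevant to the paper's purposes). Write $A = S \cap N_m$ and $\mathcal{B} = S \cap C(m,k)$, so that $V - S$ decomposes as $(N_m \setminus A) \sqcup (C(m,k) \setminus \mathcal{B})$.

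First I dispose of the degenerate possibilities $|S| \leq 1$ or $|V-S| \leq 1$: in either case strong legality fails in $\Lambda_{(m,k)}$ when $m \geq 2k - 1 \geq 3$, since no single vertex is adjacent to every vertex of the opposite bipartite class. Hence I may assume $|S| \geq 2$ and $|V-S| \geq 2$, which means the usual arguments based on connectivity of each side apply without trivial exceptions.

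The key step is to show that every $k$-subset $T \in C(m,k)$ satisfies the two-sided bound $1 \leq |T \cap A| \leq k - 1$. The neighbors of $T$ in the bipartite graph $\Lambda_{(m,k)}$ are precisely the elements of $T \subseteq N_m$. If $T \in \mathcal{B}$, strong legality forces $T$ to have a neighbor in $V - S$, yielding $T \not\subseteq A$, while connectivity of $S$ together with $|S| \geq 2$ prevents $T$ from being isolated in $S$, forcing $T \cap A \neq \emptyset$. If instead $T \in V - S$, the symmetric argument gives $T \cap A \neq \emptyset$ from strong legality and $T \not\subseteq A$ from connectivity of $V - S$. Either way, $1 \leq |T \cap A| \leq k - 1$.

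The conclusion is then purely arithmetic: the upper bound applied to all $T$ says no $k$-subset is contained in $A$, so $|A| \leq k-1$; the lower bound says no $k$-subset is contained in $N_m \setminus A$, so $|N_m \setminus A| \leq k-1$. Adding yields $m \leq 2k - 2$, contradicting the hypothesis $m \geq 2k-1$. The only delicate point is the bookkeeping in the key step, where one must correctly pair each half of the bound with the appropriate invariant (strong legality versus connectivity) on the appropriate side of the bipartition; once this is carried out, the numerical collapse is automatic.
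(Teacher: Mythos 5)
Your proof is correct, and it takes a genuinely different route from the paper's. The paper argues dynamically about a hypothetical legal orbit: by pigeonhole some $k$-set $v\in C(m,k)$ has all of its neighbours on one side of the state, which forces $S$ or $m_vS$ to equal $V-\{v\}$; it then pins down what the remaining moves must be ($m_u=\{u,v\}$ for $u\neq v$) and finally exhibits a non-legal state $m_{u_1}m_{u_2}S$ in the orbit, one side of which is the discrete triple $\{u_1,u_2,v\}$. You instead prove the static statement that $\Lambda_{(m,k)}$ has no \emph{strongly} legal state: the two-sided bound $1\le |T\cap A|\le k-1$ for every $T\in C(m,k)$ (which checks out in all four sub-cases, using strong legality for one inequality and connectedness of the relevant side, with at least two vertices, for the other) forces $|A|\le k-1$ and $|N_m\setminus A|\le k-1$, contradicting $m\ge 2k-1$. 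Your reformulation is also the right one: the literal claim ``no legal state'' fails for these graphs (the complement of a single vertex $T\in C(m,k)$ is legal but not strongly legal), and the paper's own argument, like yours, only establishes the absence of a legal orbit, which is what the surrounding discussion actually uses. What your approach buys is a shorter, system-independent argument --- emptiness of the set of strongly legal states rules out every system of moves at once, with no need to analyze the move group or track products of moves; what the paper's approach buys is an explicit picture of how any candidate orbit degenerates. Both rest on the same germ of an idea (pigeonhole on $N_m$ together with the observation that a $k$-set vertex with all neighbours on one side is trapped), so the difference is architectural rather than conceptual.
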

\begin{proof}
Consider a state $S$.
By the pigeon-hole principle, at least $k$ elements of $N_m$ are either in $S$ or in $N_m-S$.
Without loss of generality assume the former.
Let $v$ be the vertex of $C(m,k)$ that is joined to these $k$ elements of $N_m$.
Either $S=V-\{v\}$ or $m_vS=V-\{v\}$ since otherwise either $S$ or $m_vS$ is not legal since $v$ is separated by its neighbors.
Without loss of generality assume the former.
For each $u\in C(m,k)-\{v\}$ we have $m_u = \{u,v\}$ since otherwise $m_uS$ is not legal as
$u,v$ lie in different components.
Finally, note that for any two distinct $u_1,u_2\in C(m,k)-\{v\}$ the state $m_{u_1}m_{u_2}S = \{u_1,u_2,v\}$ is not legal.
\end{proof}

We note that $\Lambda_{(m,k)}$ is connected for $m\geq k$ and $\girth(\Lambda_m)=4$ for $m\geq 4$.
We have $\curvature(\Lambda_{(m,k)})>0$ for $m\geq 2k-1$ since
 $\curvature(\Lambda_{(m,k)})= 1- (m + {m \choose k})/2 + (k{m \choose k})/4
= 1-m/2 + (k/4-1/2){m \choose k}$.
The graph $\Lambda_{(4,3)}$ is the 1-skeleton of the 3-cube, and $\curvature(\Lambda_{(4,3)})=0$.
However, as above, $\Lambda_{(5,3)}$ provides a very small example of a graph with no legal state
but with $\curvature>0$.

\begin{exmp}
We will show that the bipartition of $\Lambda_{(m,k)}$ provides a legal system when $3<k<m<2k-1$.
We need to find sets $S_N\subset N_m$ and $S_C\subset C(m,k)$ such that $S_N\cup S_C$ is a state whose orbit is legal. Let $S_N= \{1,\ldots, \lceil{m/2}\rceil \} \subset N_m$. Note that $|N_m- S_N|< k$ so every $k$-subset of $N_m$ contains at least one element of $S_N$, i.e. every vertex in $C(m,k)$ is joined to a vertex in $S_N$. Since $k> m/2$ and $k< m$ there are at least two distinct $k$-subsets  of $N_m$ containing $S_N$, i.e. two vertices $v,v'\in C(m,k)$ both joined to all vertices of $S_N$. Similarly, there are vertices $u,u'\in C(m,k)$ both joined to all vertices of $N_m-S_N$. Clearly $v,v',u,u'$ are all distinct, since no vertex of $C(m,k)$ is joined to all vertices on $N_m$. Letting $S_C = \{v,u\}$ we obtain a state $S_N\cup S_C$ whose orbit is legal.
\end{exmp}

\subsection{Vertex-face-incidence-graphs  and hamiltonian cycles}\label{sub:duals}

 A \emph{cycle} $C$ in $\Theta$ is a subgraph homeomorphic to a circle.
The cycle $C$ is \emph{hamiltonian} if each vertex of $\Theta$ lies in $C$.
  Given an embedding $\Theta\subset S^2$ in the sphere,
   we let $F$ denote the set of the \emph{faces} which are components of $S^2-\Theta$.

  The \emph{vertex-face-incidence graph} $\VF(\Theta)$ is a bipartite graph whose vertices are partitioned as
  $V\sqcup F$ and where $v \in V$ is joined to $f \in F$ by an edge if and only if $v\in \boundary f$.

\begin{defn}[$k$-connected]
$\Theta$ is \emph{$k$-connected} if there does not exist a subset $U\subset V=\Theta^0$ of at most $(k-1)$ vertices such that the subgraph induced by $V-U$ is disconnected.

For instance, if $\Theta$ is $3$-connected then an embedding $\Theta\subset S^2$ is essentially unique by Steinitz's theorem.
\end{defn}

    The following is easy to verify:
\begin{prop} Let $\Theta$ be a $2$-connected planar simplicial graph. Then $\VF(\Theta)$ has a planar embedding
such that each face of $\VF(\Theta)$ is a quadrangle, corresponding to an edge of $\Theta$. Moreover,
$\VF(\Theta)$ is $3$-connected if and only if $\Theta$ is $3$-connected.
\end{prop}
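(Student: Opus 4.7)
For the first assertion, fix a planar embedding of $\Theta$ in $S^2$, place a vertex $v_f$ in the interior of each face $f$, and for each $u\in\boundary f$ draw a simple arc inside $f$ from $v_f$ to $u$, with different arcs chosen to be pairwise disjoint. This yields a planar embedding of $\VF(\Theta)$. Since $\Theta$ is $2$-connected, every edge $e=\{u,w\}$ lies on exactly two distinct faces $f_1,f_2$, so the four arcs $v_{f_1}u$, $uv_{f_2}$, $v_{f_2}w$, $wv_{f_1}$ bound a quadrangle of the embedding corresponding to $e$, and an Euler-characteristic count shows that every face of $\VF(\Theta)$ arises in this way.

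For the forward direction of the ``moreover'' part, assume $\Theta$ is $3$-connected and let $\{x,y\}\subset V(\VF(\Theta))$. The unifying principle is that any path $v=w_0w_1\cdots w_k=w$ in $\Theta$ lifts to a path $v-g_1-w_1-g_2-\cdots-g_k-w$ in $\VF(\Theta)$, where each $g_i$ is one of the two faces incident to the edge $w_{i-1}w_i$. A case analysis on whether $x,y$ lie in $V$ or in $F$ then completes the argument: if both lie in $V$ we lift a path in the connected graph $\Theta-\{x,y\}$; if exactly one is a face we lift a path in $\Theta-\{x\}$, choosing the other face at each step; if both are faces we use that two faces of a $3$-connected plane graph share at most one edge, so at most one edge of $\Theta$ has both of its incident faces in $\{x,y\}$, and Menger's theorem gives three internally disjoint paths between any two vertices, at least two of which avoid this bad edge. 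Connectivity of the surviving face vertices back to the main component follows because each face has at least three boundary vertices.

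The converse is the main obstacle, and I would attack it contrapositively: starting from a $2$-vertex-cut $\{u,v\}$ of $\Theta$, exhibit a $2$-cut of $\VF(\Theta)$. The plan is to isolate the \emph{straddling} faces of $\Theta$---those whose boundary meets more than one component of $\Theta-\{u,v\}$---which necessarily contain both $u$ and $v$ on their boundary, and then to use the cyclic orders at $u$ and $v$ together with planarity to control how many such faces occur. The hope is that the corresponding face vertices furnish a $2$-cut of $\VF(\Theta)$, since any path in $\VF(\Theta)$ between $\VF$-vertices associated to different components of $\Theta-\{u,v\}$ must pass through a straddling face vertex. The delicate step is the case where $u$ and $v$ are adjacent in $\Theta$: then the edge $uv$ contributes additional faces meeting both $u$ and $v$, and one has to argue carefully about how these faces interact with the straddling faces to still yield only two separating vertices in $\VF(\Theta)$.
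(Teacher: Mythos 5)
Your quadrangular embedding of $\VF(\Theta)$ and your argument for the implication ``$\Theta$ $3$-connected $\Rightarrow$ $\VF(\Theta)$ $3$-connected'' are sound: the path-lifting device plus the classical fact that two faces of a $3$-connected plane graph share at most one edge (equivalently, the dual is simple) carries the day, and the Menger detour can be replaced by the remark that a $3$-connected graph minus one edge is still connected. (The paper offers no proof to compare against; the proposition is asserted as ``easy to verify.'') Note only that under the paper's definition of $k$-connectivity, which imposes no floor on $|V|$, the triangle $K_3$ is ``$3$-connected'' while $\VF(K_3)=K_{3,2}$ is not, so your case where both removed vertices are faces tacitly needs $|V(\Theta)|\geq 4$.

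The converse is where the real trouble lies, and the problem is not just that your plan is unfinished. First, your key claim --- that every path in $\VF(\Theta)$ between vertices lying over different components of $\Theta-\{u,v\}$ must pass through a straddling face-vertex --- is false: such a path may run $c_1 - f - u - f' - c_2$, where $f,f'$ are \emph{non}-straddling faces incident to $u$ and to one component each. Second, and decisively, the converse is false as stated. Let $\Theta$ be two copies of $K_4$ minus an edge, glued along the nonadjacent pair $\{u,v\}$: vertices $u,v,x_1,x_2,y_1,y_2$ and edges $ux_i,vx_i,uy_i,vy_i,x_1x_2,y_1y_2$. This graph is simple, planar and $2$-connected, and $\{u,v\}$ is a $2$-cut, so $\Theta$ is not $3$-connected. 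Its (essentially unique) embedding has six faces, namely the triangles $ux_1x_2$, $vx_1x_2$, $uy_1y_2$, $vy_1y_2$ and the quadrilaterals $ux_2vy_1$, $ux_1vy_2$; a direct check of the resulting $12$-vertex graph $\VF(\Theta)$ shows it \emph{is} $3$-connected. (For instance the four paths $f_{ux_1x_2}\!-\!u\!-\!f_{uy_1y_2}$, \ $f_{vx_1x_2}\!-\!v\!-\!f_{vy_1y_2}$, \ $x_1\!-\!f_{ux_1vy_2}\!-\!y_2$, \ $x_2\!-\!f_{ux_2vy_1}\!-\!y_1$ are internally disjoint, so no two vertices separate the $x$-side from the $y$-side, and the minimum valence is $3$.)

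The correct statement, which also explains what goes wrong: in a $2$-connected plane graph any $2$-cut $\{p,q\}$ must lie on at least two common faces; since the faces of $\VF(\Theta)$ are the quadrangles $xfyf'$ indexed by edges of $\Theta$, and $\Theta$ is simple, the only way this can happen nontrivially is that $p,q$ are two \emph{face}-vertices whose faces share at least two edges of $\Theta$. Hence $\VF(\Theta)$ is $3$-connected if and only if no two faces of $\Theta$ share more than one edge --- a condition strictly weaker than $3$-connectivity of $\Theta$, as the example shows. The proposition survives in the only setting where the paper uses it, namely $3$-valent $\Theta$ (Theorem~\ref{thm:DWtoHam} and Tutte's graph): for a cubic $2$-connected plane graph, a $2$-cut $\{u,v\}$ always yields some bridge meeting $u$ in a single edge $e_u$ and some bridge meeting $v$ in a single edge $e_v$ with $e_u\neq e_v$, and the two faces on either side of these edges coincide pairwise and therefore share both $e_u$ and $e_v$, producing the required $2$-cut of $\VF(\Theta)$. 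I would recommend either restricting the converse to $3$-valent $\Theta$ or proving the characterization via shared edges instead.
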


The main result of this section is:

\begin{thm}\label{thm:DWtoHam}
Let $\Theta$ be a  $2$-connected $3$-valent planar graph. Then the following are equivalent:
\begin{description}
\item [(a)] $\Theta$ has a Hamilton cycle,
\item [(b)] $\VF(\Theta)$ has a strongly legal state,
\item [(c)] $\VF(\Theta)$ has a legal system.
\end{description}
\end{thm}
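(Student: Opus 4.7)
The plan is to prove the cycle of implications $(c) \Rightarrow (b) \Rightarrow (a) \Rightarrow (c)$. The first is immediate from the earlier observation that every element of a legal orbit is strongly legal.

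For $(b) \Rightarrow (a)$, let $S = S_V \sqcup S_F$ be a strongly legal state in $\VF(\Theta)$ with $S_V \subseteq V(\Theta)$ and $S_F \subseteq F(\Theta)$. I plan to show the set $E'$ of edges of $\Theta$ having one incident face in $S_F$ and the other in $F(\Theta) - S_F$ is a Hamilton cycle. The crucial planarity observation is that in a $3$-valent $2$-connected planar graph, any two distinct faces meeting at a common vertex $v$ must share an edge at $v$, since $v$ has only three incident edges and three face-corners, and each pair of face-corners shares exactly one of those edges. Using this, connectedness of $S$ in $\VF(\Theta)$ promotes to connectedness of $S_F$ in the planar dual $\Theta^*$ (where faces are adjacent iff they share an edge of $\Theta$): an alternating face--vertex path in $\VF(\Theta)[S]$ between two faces of $S_F$ gives consecutive faces that share a vertex and hence an edge, yielding a path in $\Theta^*[S_F]$. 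Symmetrically $F(\Theta) - S_F$ is connected in $\Theta^*$, so $E'$ is a bond in $\Theta^*$. The classical planar duality between cycles in $\Theta$ and bonds in $\Theta^*$ then identifies $E'$ with a simple cycle $C$ in $\Theta$. Strong legality of $S$ together with the connectedness of $S$ and $\bar S$ forces every vertex $v$ to have incident faces in both $S_F$ and $F(\Theta) - S_F$ (strong legality provides the opposite side; connectedness of the containing set provides the same side). Thus every vertex lies on $C$ and $C$ is Hamiltonian.

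For $(a) \Rightarrow (c)$, I plan to apply Lemma~\ref{lem:TBWS} to the bipartite graph $\VF(\Theta)$ with parts $A = V(\Theta)$ and $B = F(\Theta)$. The Hamilton cycle $C$ naturally provides $B = F_1 \sqcup F_2$ by its two sides. I take $A = V'_1 \sqcup V'_2$ where $V'_1, V'_2$ are two consecutive arcs of $C$ forming a suitably shifted halving. Condition (i) of Lemma~\ref{lem:TBWS} holds automatically for any arc-partition: any two consecutive vertices $v, v'$ of $V'_i$ are joined by a cycle edge $e$ of $C$, which bounds exactly one face in each side $F_j$, giving a length-two path $v - f - v'$ inside $\VF(\Theta)[V'_i \cup F_j]$; by transitivity $V'_i$ lies in a single component. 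It remains to choose the halving so that no face boundary is entirely contained in $V'_1$ or in $V'_2$ (condition (ii)); this is a counting/pigeonhole argument using that the total face-vertex incidence in $\Theta$ equals $3|V(\Theta)|$, which bounds the number of faces whose vertex set spans an arc of $C$ short enough to fit inside one half, allowing us to rotate the cut to avoid all of them.

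The main obstacle lies in $(a) \Rightarrow (c)$: verifying the existence of an appropriate arc-partition satisfying condition (ii) for arbitrary $\Theta$. This requires a careful count of how face boundaries can be laid out along $C$, using the cubic structure to limit their cumulative span and ensure that some rotation of the halving cuts every face boundary non-trivially.
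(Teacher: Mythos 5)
Your implications (c)$\Rightarrow$(b) and (b)$\Rightarrow$(a) are correct. The second is essentially the paper's construction (the edge set $E'$ of edges separating the two classes of faces), except that you certify $E'$ is a single cycle via planar duality with bonds of the dual graph, whereas the paper counts regions of $S^2-\bigcup C_i$ directly; both are valid, and the remaining points (every vertex lies on faces of both classes, hence on exactly two edges of $E'$) match the paper.

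The implication (a)$\Rightarrow$(c) has a genuine gap, and it is not merely that the pigeonhole count is unfinished: the arc-partition you seek does not exist in general. In a $3$-valent graph, every triangular face has exactly two of its edges on any Hamilton cycle $C$ (each of its three vertices uses two of its three edges on $C$, at least one of which is a triangle edge, so the triangle contains at least two, and not three, cycle edges); hence its three vertices occupy three consecutive positions of $C$. For such a face to meet both arcs, one of the only two cut edges of the arc-partition must be one of that triangle's two cycle edges. So if $\Theta$ has three pairwise vertex-disjoint triangular faces --- for instance, truncate three vertices of the $3$-cube, which preserves $3$-valence, planarity, $3$-connectivity and Hamiltonicity --- then three distinct cut edges are forced and no partition of $C$ into two arcs satisfies condition~\eqref{TBWS:2} of Lemma~\ref{lem:TBWS}. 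Your rotation count also fails quantitatively: a face whose vertex set spans a short arc of $C$ lies inside one half for all but $O(1)$ of the $n$ rotations, so the number of violated faces for a typical rotation can be linear in $n$ rather than zero.

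The paper's proof avoids arcs entirely, and this is the idea you are missing. Since $\Theta$ is $3$-valent, $\Theta-C$ is a perfect matching; orient its edges arbitrarily and let $V_1,V_2$ be the sets of initial and terminal vertices. Keep $F_1,F_2$ as the two sides of $C$, and apply Lemma~\ref{lem:TBWS} with $A=F$ and $B=V$ --- the roles reversed from your setup. Condition~\eqref{TBWS:1}, connectivity of $F_i$ through $V_j$, holds because any two faces in one hemisphere are joined by a gallery whose consecutive faces share interior edges; these are chords, and every chord has one endpoint in each $V_j$. Condition~\eqref{TBWS:2} then becomes the trivial one: every vertex lies on $C$ and hence borders a face in each hemisphere.
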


Figure~\ref{f:K4vsCube} illustrates the correspondence between a Hamilton cycle in $\Theta$
and a strongly legal state of $\VF(\Theta)$ in the case when $\Theta$ is a ``house''.

\begin{figure}
                        \newcommand\cir{node [circle,draw,fill,inner sep=1pt]}
\[\begin{tikzpicture}[baseline=0, scale = 0.55]
\draw[black,thick]
{
	(0,2) \cir {} -- (0,0) \cir {} -- (2,0) \cir {} -- (2,2) \cir {} -- (1,3.41) \cir {} -- (0,2) -- (2,2)
};
\draw[red, thick]
{
	(0,0) -- (2,0) -- (2,2) -- (1,3.41) -- (0,2) -- (0,0)
};
\node at (1,-.5) {$\Theta$};
\end{tikzpicture}
\;\;\;\;\;\;
\begin{tikzpicture}[baseline=0, scale = 0.55]
\draw[black,thick]
{
	(0,2) \cir {} -- (0,0) \cir {} -- (2,0) \cir {} -- (2,2) \cir {} -- (1,3.41) \cir {} -- (0,2) -- (2,2)
};
\node at (1,-.5) {$\Theta$};
\end{tikzpicture}
\;\;\;\;
\begin{tikzpicture}[baseline=0, scale= 0.55]
\draw[Thistle]
{
	(1,1) \cir {}
	(1,2.5) \cir {}
	(-1,4) \cir {}
};
\draw[Thistle]
{
	(0,2)  to (1,1)  -- (0,0)
	(2,0)  -- (1,1) -- (2,2)  -- (1,2.5)  -- (1,3.41)
	(0,2) -- (1,2.5)
	(-1,4)  to (1,3.41)
	(-1,4) to (0,2)
	(-1,4) to (0,0)
	(-1,4) to[out=260,in= 180] (0,-.5) to[out=0, in=190] (2,0)
	(-1,4) to[out=30, in=60] (2,2)
};
\draw[black,thick]
{
	(0,2) \cir {} -- (0,0) \cir {} -- (2,0) \cir {} -- (2,2) \cir {} -- (1,3.41) \cir {} -- (0,2) -- (2,2)
};
\end{tikzpicture}
\begin{tikzpicture}[baseline=0,scale=0.55]
\draw[black,thick]
{
	(0,2) \cir {} to (1,1) \cir {} -- (0,0) \cir {}
	(2,0) \cir {} -- (1,1) -- (2,2) \cir {} -- (1,2.5) \cir {} -- (1,3.41) \cir {}
	(0,2) -- (1,2.5)
	(-1,4) \cir {} to (1,3.41)
	(-1,4) to (0,2)
	(-1,4) to (0,0)
	(-1,4) to[out=260,in= 180] (0,-.5) to[out=0, in=190] (2,0)
	(-1,4) to[out=30, in=60] (2,2)
};
\node at (1,-1) {$\mathcal{VF}(\Theta)$};
\end{tikzpicture}
\begin{tikzpicture}[baseline=0,scale=0.55]

\draw[black,thick]
{
	(0,2) \cir {} to (1,1) \cir {} -- (0,0) \cir {}
	(2,0) \cir {} -- (1,1) -- (2,2) \cir {} -- (1,2.5) \cir {} -- (1,3.41) \cir {}
	(0,2) -- (1,2.5)
	(-1,4) \cir {} to (1,3.41)
	(-1,4) to (0,2)
	(-1,4) to (0,0)
	(-1,4) to[out=260,in= 180] (0,-.5) to[out=0, in=190] (2,0)
	(-1,4) to[out=30, in=60] (2,2)
};
\draw[WildStrawberry,thick]
{
	(0,0) \cir {} to (1,1) \cir {} -- (0,2) \cir {} -- (1,2.5) \cir {} -- (2,2) \cir {} -- (1,1)
};
\draw[LimeGreen, thick]
{
	(1,3.41) \cir {} -- (-1,4) \cir {} to[out=260,in= 180] (0,-.5) to[out=0, in=190] (2,0) \cir {}
};
\node at (1,-1) {$\mathcal{VF}(\Theta)$};
\end{tikzpicture}
\]
\caption{ $\Theta$ is on the left and $\VF(\Theta)$ on the right.
 The hamiltonian cycle on the left corresponds to the strongly legal state on the right.}
\label{f:K4vsCube}
\end{figure}
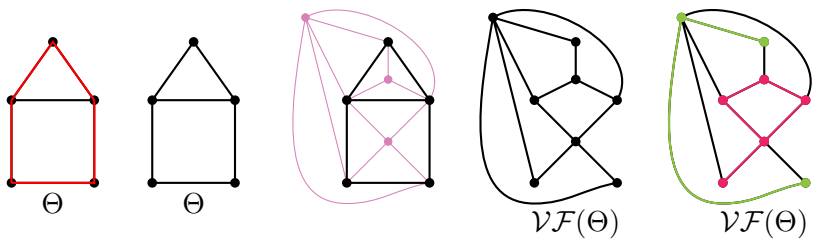

\begin{proof}[Proof of (c) $\Rightarrow$ (b)]
This follows from the definitions.
\end{proof}

\begin{proof}[Proof of (b) $\Rightarrow$ (a)]
Let $A$ be a legal state and let $B \ = \ (\VF(\Theta))^0-A$.
 Let $E \subseteq E(\Theta)$ be the set of edges $e$ of $\Theta$ for which the two faces meeting along
  $e$ belong to different parts of $A\sqcup B$. By definition of a strongly legal state, every vertex of $\Theta$ is incident with faces in both $A$ and $B$. Thus as $\Theta$ is $3$-valent we find that every vertex of $\Theta$ is incident to exactly two edges of $E$. Thus $E$ is the edge set of collection of disjoint cycles $\{C_i\}$ in $\Theta$. We will show that this collection consists of a single cycle that is a hamiltonian cycle of $\Theta$.
  Suppose this were not the case. Let $R$ be the set of regions of $S^2- \bigcup_i C_i$.
  Note that any two faces of $\Theta$ that lie in the same region belong to the same part of $A\sqcup B$.
If there were two or more cycles then there would be three or more regions and hence two whose faces are in the same part, say $A$. A path in $A$ from one to the other would have to pass through a region whose faces are in $B$.
\end{proof}

\begin{proof}[Proof of (a) $\Rightarrow$ (c)]
 Let $C$ be a Hamiltonian cycle of $\Theta$.
 Since $\Theta$ is 3-valent, by choosing an arbitrary way of directing the edges of $\Theta-C$, we find  that each vertex of $\theta$ is either an initial or terminal vertex.
We thus have a partition $V(\Theta)=V_1\sqcup V_2$ consisting of initial and terminal vertices.

We will apply Lemma~\ref{lem:TBWS} where $A=F$ and $B=V$ denote the face-vertices and vertex-vertices of $\VF(\Theta)$.
Let $F_1,F_2$ consist of the faces on opposite sides of $C\subset S^2$.
Let $V_1,V_2$  be  $A$ and $\Theta^0-A$.

Condition~\eqref{TBWS:1} is satisfied as follows:
Consider the disk diagram $D_i$ consisting of a hemisphere bounded by $C$,
so the faces of $D_i$ are the elements of $F_i$.
For each pair of faces in $F_i$, there is a gallery joining them in $D_i$
that consists of a sequence of faces meeting along common edges.
Since each such edge is in the interior of $D_i$, it is an edge of $\Theta-C$.
Therefore it has one vertex in $Y_1$ and the other vertex in $Y_2$.
Thus this gallery provides the desired path.

Condition~\eqref{TBWS:2} is satisfied since every vertex lies on $C$ and thus lies on a face in each hemisphere.
\end{proof}

Theorem~\ref{thm:DWtoHam} implies that characterizing the planar quadrangulations (planar graphs in which every face has four edges) having legal systems is a difficult task. In particular a classical conjecture of Barnette states that every bipartite $3$-connected $3$-regular planar graph has a Hamilton cycle (see \cite{Gould2014}). Using Theorem~\ref{thm:DWtoHam} we can awkwardly restate this conjecture in the language of legal systems.

\begin{conj}[Barnette]\label{conj:reformed barnette}
Let $Q$ be a $3$-connected planar quadrangulation, and let $(A,B)$ be the bipartition of the vertices of $Q$. Suppose that every vertex of $A$ has valence three, and that $A$ admits a partition $(A_1,A_2)$ such that every face of $Q$ is incident with a vertex in $A_1$ and with a vertex in $A_2$. Then $Q$ has a legal system.
\end{conj}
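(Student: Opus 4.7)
The plan is to use Theorem~\ref{thm:DWtoHam} to translate the conclusion into a Hamiltonicity problem. Writing $Q = \VF(\Theta)$ with $V(\Theta)=A$ and $F(\Theta)=B$, each quadrangular face of $Q$ corresponds to an edge $e$ of $\Theta$ whose two $A$-vertices are precisely the endpoints of $e$. Therefore the hypothesis that every face of $Q$ meets both $A_1$ and $A_2$ says exactly that every edge of $\Theta$ has one endpoint in $A_1$ and one in $A_2$; that is, $(A_1,A_2)$ is a proper $2$-coloring and $\Theta$ is bipartite. Combined with the $3$-connectivity of $\Theta$ (equivalent to that of $\VF(\Theta)$ by the earlier proposition) and its $3$-regularity, Theorem~\ref{thm:DWtoHam}(a)$\Leftrightarrow$(c) reduces the conjecture to the claim that every $3$-connected $3$-regular bipartite planar graph has a Hamilton cycle.

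Having made this reduction, the natural attack is induction on $|V(\Theta)|$: take a minimum counterexample $\Theta$ and seek a local reduction $\Theta \leadsto \Theta'$ producing a smaller graph in the same class to which the inductive hypothesis applies, then lift a Hamilton cycle of $\Theta'$ back to $\Theta$. Standard candidates are contracting a well-chosen edge between two $4$-faces, or collapsing a $4$-face by identifying its two pairs of opposite vertices; both operations preserve $3$-regularity, and one can often arrange $3$-connectivity and bipartiteness to survive. Alternatively, one could attempt to construct a strongly legal state directly, using Theorem~\ref{thm:DWtoHam}(b), by exploiting the bipartition $(A_1,A_2)$ of $A$ together with an auxiliary assignment on $B$, perhaps derived from a flow or matching argument on $\VF(\Theta)$.

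The main obstacle is that this reformulation is \emph{exactly} Barnette's conjecture, which has been open since the 1960s. The naive inductions break because controlling $3$-connectivity, bipartiteness, simplicity, and Hamilton-cycle lifting simultaneously at each reduction step is highly constrained, and no small reducible configuration is known to always exist. The legal-system viewpoint provides no shortcut here, since Theorem~\ref{thm:DWtoHam} equates legal systems with Hamilton cycles outright: any procedure that produces a legal system from $(A_1,A_2)$ must implicitly reconstruct a Hamilton cycle of $\Theta$. Without a fundamentally new structural insight into $3$-connected $3$-regular bipartite planar graphs---beyond the partial results available for special face-length patterns, such as Goodey's theorem for graphs with faces of lengths $\{4,6\}$---a complete proof seems out of reach, and the best one can plausibly do is verify the conjecture for subfamilies already covered in the literature on Barnette's conjecture.
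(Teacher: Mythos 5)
Your analysis is correct and matches the paper exactly: the statement labelled Conjecture~\ref{conj:reformed barnette} is not proved in the paper at all --- it is the authors' restatement of Barnette's (still open) conjecture, obtained by running Theorem~\ref{thm:DWtoHam} in the direction (a)~$\Rightarrow$~(c) after translating the hypotheses on $Q=\VF(\Theta)$ into ``$\Theta$ is a $3$-connected, $3$-regular, bipartite planar graph'' precisely as you do. Your recognition that any proof would have to resolve Barnette's conjecture itself, and that the legal-system formulation offers no shortcut, is the right conclusion; no further proof is expected or supplied by the paper.
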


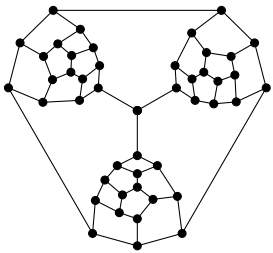
\begin{figure}
\tikzstyle{every node}=[circle, draw, fill,
                        inner sep=0pt, minimum width = 3pt]
\begin{tikzpicture}[scale = 0.6]
\foreach \x in {30, 150, 270}
{
	\draw (0,0) node {} -- (\x:1) node {};
	\draw (\x-20:1.3) node {} -- (\x:1) -- (\x+20:1.3) node {} -- (\x:1.4) node {} -- (\x-20:1.3) -- (\x-25:1.7) node {} -- (\x-10:1.9) node {} -- (\x:1.7) node {} -- (\x+10:2) node {} -- (\x+25:2.1) node {} -- (\x+20:1.3);
	\draw (\x:1.7) -- (\x:1.4);
	\draw (\x-25:1.7) -- (\x-25:2.2) node {} -- (\x-10:2.3) node {} -- (\x-10:1.9);
	\draw (\x-25:2.2) -- (\x-20:2.9) node {};
	\draw (\x+25:2.1) -- (\x+20:2.9) node {} -- (\x:3.0) node {} -- (\x-20:2.9);
	\draw (\x-10:2.3) -- (\x:2.4) node {} --(\x:3);
	\draw (\x:2.4) -- (\x+10:2);
	\draw (\x+20:2.9) -- (\x+100:2.9);
} 	
\end{tikzpicture}
\caption{Tutte's Graph is a famous planar graph with no hamiltonian cycle.}
\label{fig:Tutte}
\end{figure}

\begin{exmp}[Tutte]\label{exmp:tutte}
Tait had conjectured in 1884 that every $3$-valent $3$-connected planar graph has a hamiltonian cycle.
In 1946, Tutte produced a counterexample to this conjecture:
 the graph $\Theta$ depicted in Figure~\ref{fig:Tutte}.
Its vertex-face incidence graph $\VF(\Theta)$ has the property that any proper connected subgraph with $\curvature\geq 0$ is a square or a chain of length $\leq 2$.
\begin{com}
Indeed, a subgraph with $\kappa\geq 0$ would have all regions squares. So it suffices to check that $\VF(\Theta)$ does not contain a $4$-cycle bounding a region. This would correspond to a pair of regions in $\Theta$ with disconnected intersection.
\end{com}
Combining this with Theorem~\ref{thm:DWtoHam}, we deduce that there exist 3-connected planar quadrangulations  not admitting a legal state, and hence Guess~\ref{guess:naive} is false for planar graphs, as there is not even a single legal state.
Consequently, Bestvina-Brady Morse theory cannot be applied to any  cover of the cube complex $X$ associated to a torsion-free finite index subgroup of $G$.

The Coxeter group $G=G(\VF(\Theta))$ corresponds to the fundamental domain of a finite co-volume cusped 3-dimensional hyperbolic right-angled Coxeter group. Indeed, there is no internal 3-cycle in $\VF(\Theta)$ since it is bipartite and no internal 4-cycle in $\VF(\Theta)$ since any two regions of $\Theta$ have connected intersection - as revealed by an inspection of Figure~\ref{fig:Tutte}. The intersection of two quads is either empty, a vertex, or an edge, since otherwise either $\Theta$ is not simplicial, or two regions of $\Theta$ overlap along more than one edge. Consequently,  $\VF(\Theta)$ satisfies the conditions of Proposition~\ref{prop:right angled reflection}.
\end{exmp}

\begin{com}
Does the pyramidal triangulation of a quadrangulation of the 2-sphere always have a legal system?
Always have a legal state?

Does there exist a flag triangulation of $S^2$ that has no legal state?
Any such should arise as a vertex link of some cubulated hyperbolic 3-manifold.
\end{com}

\begin{rem}\label{rem:connectivity}
Tutte proved that $4$-connected planar graphs are hamiltonian,
 and so perhaps $\VF(\Theta)$ has a legal state when $\Theta$ has stronger connectivity properties:
 e.g. in the spirit of Conjecture~\ref{conj:reformed barnette}.
 Finally, we note that
 Grinberg's formula \cite{Grinberg68} provides a host of other counterexamples to Tait's conjecture,
and these provide a rich family of examples of 3-dimensional right-angled hyperbolic reflection groups where
Bestvina-Brady Morse theory cannot possibly show virtual fibering for any finite cover of the associated cube complex.

The above examples show that, using the standard affine structure on the cubes, Bestvina-Brady theory cannot be applied to the cube complex which is the dual spine
 to the hyperbolic tiling by ideal polyhedra that is formed from the reflection walls of certain hyperbolic Coxeter groups. In Example~\ref{exmp:barycentric} we show that the method of this text cannot be applied to certain right-angled Coxeter groups that are commensurable with closed (non-hyperbolic) 3-manifold fundamental groups.
 \end{rem}

\begin{rem}
Consider an embedding $\Theta\subset S^2$.
As $\girth(\VF(\Theta)) = 4$ we have
$$\kappa(\VF(\Theta)) = 1 - \frac{|V(\VF(\Theta))|}{2} + \frac{|E(\VF(\Theta))|}{4}= 1 - \frac{|V(\Theta)| + |F(\Theta)|}{2} + \frac{2|E(\Theta)|}{4} = 0$$
where the final equality holds by Euler's formula for $\Theta\subset S^2$.
\end{rem}

\begin{lem}\label{lem:conesquare}
 Let $C$ be a cycle of length four in a graph $\Gamma'$, and let  $\Gamma$ be the graph obtained from $\Gamma'$ by adding a vertex adjacent to the vertices of $C$ and to no other vertices. If $\Gamma$ has a legal system then so does $\Gamma'$.
\end{lem}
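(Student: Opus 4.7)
The plan is to define a legal system on $\Gamma'$ by restricting the given legal system on $\Gamma$ via the projection $\pi\colon X \mapsto X \setminus \{w\}$. Let $(m_v)_{v \in V(\Gamma)}$ be the given legal system on $\Gamma$ with legal orbit $\mathcal{O}$ containing $S$, and let $w$ denote the added cone vertex. For each $v \in V(\Gamma')$ I define $m'_v := m_v \setminus \{w\}$; this is a valid move in $\Gamma'$, since $v \neq w$ gives $v \in m'_v$, and every $\Gamma'$-neighbor of $v$ is a $\Gamma$-neighbor and is therefore excluded from $m_v$. Take the starting state $T := S \setminus \{w\}$ in $\Gamma'$.

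The map $\pi$ is a homomorphism between the ambient $\integers_2$-vector spaces of states and satisfies $\pi(m_v) = m'_v$ for $v \in V(\Gamma')$, so $M' \cdot T = \pi(M'' \cdot S)$, where $M' := \langle m'_v \rangle$ and $M'' := \langle m_v : v \in V(\Gamma') \rangle \le M$. Every $T' \in M' \cdot T$ then lifts to some $S' \in M'' \cdot S \subseteq \mathcal{O}$, and $S'$ is strongly legal in $\Gamma$. To verify that $T' = \pi(S')$ is legal in $\Gamma'$, the side of $T'$ not containing $w$ in the lift inherits connectedness directly from $\Gamma$; for the other side $A \ni w$ (where $A = S'$ or $A = V(\Gamma) \setminus S'$), we need $\Gamma[A] \setminus \{w\}$ to be connected. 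Since the only $\Gamma$-neighbors of $w$ lie in $C$, strong legality of $S'$ forces $1 \le |C \cap A| \le 3$, and a short case analysis on $C \cap A$ and its placement in the cyclic order of $C$ shows that $w$ is a cut vertex of $\Gamma[A]$ only in the configuration $|C \cap A| = 2$ with the two vertices opposite in $C$ and not joined by an alternative path in $\Gamma[A \setminus \{w\}]$.

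The hard part is to rule out this opposite-pair configuration in the orbit $M'' \cdot S$. My plan is to reduce first, without loss of generality, to the case $m_w = \{w\}$. Writing $m_w = \{w\} \sqcup A_0$ with $A_0 \cap C = \emptyset$ (forced by the move axiom), I would modify the original system by appropriately replacing selected generators $m_v$ with $m_v \triangle m_w$, using that $A_0$ avoids $C$ and the freedom to pick indices $v$ outside $A_0 \cup N(A_0)$ so as to preserve both the move axioms for each generator and the legal orbit $\mathcal{O}$ itself. The modified system satisfies $m^*_w = \{w\}$ and still has $\mathcal{O}$ as a legal orbit. Under this normalization, if the opposite-pair configuration occurred for some $S' \in \mathcal{O}$ with $w \in S'$, then $m^*_w \cdot S' = S' \setminus \{w\}$ would also lie in $\mathcal{O}$ and be legal in $\Gamma$; but its $\Gamma$-induced subgraph $\Gamma[S' \setminus \{w\}]$ is disconnected by the bad-case hypothesis, a contradiction. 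Consequently no bad configuration arises in $M'' \cdot S$, the orbit $M' \cdot T$ consists entirely of legal states of $\Gamma'$, and $\Gamma'$ has a legal system as required.
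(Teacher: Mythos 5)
Your setup is sound and matches the paper's intended reduction: deleting $w$ from each move gives valid moves on $\Gamma'$, the deletion map is a homomorphism so the projected orbit is the orbit of the projected state, and the only way a projected state can fail to be legal is the configuration you isolate — a state $S'$ in the legal orbit with $w$ on a side $A$, with $C\cap A$ equal to an opposite pair $\{a,c\}$ of the square, and with $a,c$ lying in different components of the subgraph induced by $A\setminus\{w\}$. Up to this point you agree with the paper's proof.

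The gap is in how you rule out this configuration. The normalization $m_w^*=\{w\}$ is not available. Replacing generators $m_v$ with $v\neq w$ by $m_v\triangle m_w$ never changes the generated group $M$, but it also never changes the move at $w$; to arrange $m_w^*=\{w\}$ you must alter the generator at $w$ itself, and the modified group then contains $\{w\}$, an element that need not lie in $M$ (there is no reason a product of the given moves equals $\{w\}$). Consequently the orbit of $S$ under the modified system properly contains $\mathcal{O}$: it includes $S'\triangle\{w\}$ for every $S'\in\mathcal{O}$, and the legality of precisely these states is essentially what the lemma asks you to establish, so the argument is circular. The paper's resolution toggles $a$ and $c$ rather than $w$: one of $m_a$, $m_c$, or $m_a\triangle m_c$ is an element $m$ containing both $a$ and $c$ but none of $b,d,w$ (each of $b,d,w$ is adjacent to both $a$ and $c$, hence excluded from $m_a$ and $m_c$; and if $c\notin m_a$ and $a\notin m_c$ then $m_a\triangle m_c$ contains both). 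Applying $m$ to the bad state $S'$ yields a state of the legal orbit in which $w$ lies on one side while all four of its neighbors $a,b,c,d$ lie on the other, so that state is not strongly legal — contradicting the fact that every state in a legal orbit is strongly legal. This $m$ already lies in your group $M''$, so the fix slots directly into your framework in place of the normalization step.
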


\begin{rem}
The analogous statement holds more generally with $C$ replaced by
the cocktail party graph $K(2,2,\ldots, 2)$.
More generally, consider the amalgam $\Gamma = \Gamma'\cup_C \Gamma''$ of the graphs $\Gamma',\Gamma''$ along such $C$:
If $\Gamma$ has a legal system, then so do $\Gamma'$ and $\Gamma''$. This specializes to the above result when
 $\Gamma''$ is the cone on a cocktail party graph $C$.
\end{rem}
\begin{proof}
Let $a,b,c,d$ be the vertices of $C$ in order, and let $v$ be the new vertex. Let $M$ be the group associated to the legal system on $\Gamma$, let $O$ be a legal $M$-orbit, let $V$ denote the vertex set of $\Gamma$,
and let $V'=V-\{v\}$ be the vertex set of $\Gamma'$.
 We claim that the restriction of $M$  and $O$ to $\Gamma'$ (i.e. ignore $v$) provides a  legal system.

Suppose not. Then w.l.o.g. there exists a state $S \in O$ such that $S \cap V'$ does not induce a connected subgraph of $\Gamma'$. As $S$ induces a connected subgraph in $\Gamma$, w.l.o.g. we have $a,c,v \in S$ and $b,d \not \in S$. However, there exists $m \in M$ such that $a,c \in m$  but $v,b,d \not \in m$. Indeed, either $m_a$  or $m_c$ or $m_a+m_c$ has this property.
The state $mS$ is not strongly legal since $v \in mS$, but  $a,b,c,d \not \in mS$.
\end{proof}

\begin{exmp}\label{exmp:barycentric}
Let $\Gamma$ be the 1-skeleton of the first barycentric subdivision of the cell structure for $S^2$ whose 1-skeleton is Tutte's graph. Then the clique complex of $\Gamma$ is $S^2$ but $\Gamma$ has no legal system.
Indeed, by starting with $\Gamma$ and applying Lemma~\ref{lem:conesquare} sixty-nine times (once for each edge of $\Theta$) we arrive at $\VF(\Theta)$. Hence $\Gamma$ has no legal-system since $\VF(\Theta)$ has no legal-system by Example~\ref{exmp:tutte}.
\end{exmp}

\subsection{Failure With Negative Euler characteristic}\label{sub:negative euler}
\begin{rem}When $G$ is a locally quasiconvex hyperbolic group it cannot have an infinite index f.g. infinite normal subgroup \cite[Prop~3.9]{ABC91}. Likewise, a non virtually abelian Kleinian group that is locally geometrically finite cannot have an infinite index nontrivial f.g. normal subgroup.
There are thus a variety of examples of right-angled Coxeter groups $G(\Gamma)$ where $\Gamma$ has no legal system.

A prominent such family arise from planar graphs $\Gamma\subset S^2$ such that some region has more than five sides
but where each cycle of length~$3$~or~$4$ bounds a region. In this case, $G(\Gamma)$ has an index~$\leq 16$ subgroup that is the fundamental group of a hyperbolic 3-manifold $M$ where $\boundary M$ contains a surface of genus~$>1$ that corresponds to the large region. Thus $\pi_1M$ is locally geometrically finite by a result of
Thurston~\cite{Canary96}, and hence cannot have a nontrivial normal subgroup that is finitely generated.

The previous class of examples has $\euler(G)<0$.
More generally, other examples of such Coxeter groups are virtually 2-dimensional coherent groups where $\euler(G)\neq 0$. \begin{com}There are many such examples in \ref{SectionalpqrITHINK}.\end{com}
For them the f.g. kernel of Corollary~\ref{cor:win f.g.}  would be free and thus $\euler(G)=0$, which is impossible.
Indeed,  if  $N\subset G$ is a nontrivial f.p. infinite index normal subgroup with  $\cd(G)=2$
then $N$ is  free \cite{Bieri76}.
 \end{rem}

The above discussion suggests that $\beta_0(G)-\beta_1(G)+\beta_2(G) \geq 0$ when there is a legal system
and $\cd(G)=2$,
and indeed, we have the following simple and precise count that holds in general:
\begin{thm}
Let $\Gamma$ be a finite graph. Suppose there is a legal system for $\Gamma$.
Then $\kappa_2(\Gamma) = 1-\frac{V}{2}+\frac{E}{4} \geq 0$.

Moreover, if $\kappa_2(\Gamma) = 0$ then all states in a legal system are trees.
\end{thm}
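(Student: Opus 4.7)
The plan is a double-counting argument over a fixed legal orbit $O$, exploiting the linear-algebraic structure of the subgroup $M \leq \integers_2^V$ generated by the moves. For each edge $e=\{u,v\}\in E$ I would introduce the \emph{cut indicator} $\chi_e \colon \integers_2^V \to \integers_2$ defined by $\chi_e(S) = |S\cap e| \bmod 2$; this is a group homomorphism. The key observation is that $\chi_e(m_v) = 1$: indeed $v \in m_v$ by Move Property~\eqref{move prop:1}, and $u \notin m_v$ by Move Property~\eqref{move prop:2}. Hence $\chi_e|_M$ is surjective, so $\chi_e$ is equidistributed across the orbit $O$, and the edge $e$ is \emph{non-crossing} (i.e.\ both endpoints lie in $S$ or both lie in $V-S$) for exactly $|O|/2$ states $S \in O$.

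Next I would double-count the set of pairs $\{(S,e) : S \in O,\ e \text{ non-crossing in } S\}$. Summing over edges yields $|E|\cdot|O|/2$. Summing over states: each $S\in O$ is legal, so the subgraphs induced by $S$ and by $V-S$ are both nonempty and connected, and therefore contain at least $(|S|-1) + (|V-S|-1) = |V|-2$ edges in total, all of which are non-crossing. Comparing the two counts gives $|E|/2 \geq |V|-2$, equivalently $\kappa_2(\Gamma) = 1 - \tfrac{|V|}{2} + \tfrac{|E|}{4} \geq 0$.

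For the equality case, if $\kappa_2(\Gamma) = 0$ then $|E| = 2|V|-4$, so the edge-count inequality in the previous paragraph must be an equality for \emph{every} state $S$ in the legal orbit. This forces the subgraphs induced by $S$ and by $V-S$ to have exactly $|S|-1$ and $|V-S|-1$ edges respectively; since each is connected, each is a tree, as claimed.

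The crux of the argument is the one-line calculation $\chi_e(m_v)=1$, which combines both move properties to force equidistribution; once this is in hand, everything else is a routine bookkeeping exercise. I do not anticipate a serious obstacle here, since the move axioms appear to be designed precisely so that every edge is perfectly balanced across any orbit, and the two standard bounds ``connected graph on $n$ vertices has $\geq n-1$ edges, with equality iff tree'' deliver both conclusions simultaneously.
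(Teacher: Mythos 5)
Your proof is correct and follows essentially the same strategy as the paper's: both average a connectivity inequality over the legal orbit, exploiting that the orbit is a coset of $M$ on which the moves force exact equidistribution (your parity homomorphism $\chi_e$ is the paper's observation that a $d$-clique lies in $S$ for exactly $2^{-d}$ of the states, specialized to $d=2$ and symmetrized over $S$ and $V-S$). The only difference is bookkeeping: the paper sums $1-v(S)+e(S)\geq 0$ using connectivity of $S$ alone, while you sum the count of non-crossing edges against $(|S|-1)+(|V-S|-1)$; the two computations yield the same inequality and the same equality analysis.
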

\begin{proof}
Let $K$ be a $d$-clique in $\Gamma$. Observe that $K$ occurs in exactly $\frac1{2^d}$ of the states in the legal orbit. Indeed, letting $m_1,\ldots, m_d$ denote the moves at the $d$ vertices of $K$,
we see that the orbit is partitioned into  cardinality $2^d$ equivalence classes  according to the action of $\langle m_1,\ldots, m_d\rangle$ and $K$ appears in precisely one element within each class.
As each state $S$ is connected we have  $0\leq 1-v(S)+e(S)$.
Thus letting $n$ denote the cardinality of the legal orbit we have:
$$0\leq \sum_S (1-v(S)+e(S))
 = \sum_S 1  - \sum_S v(S)  + \sum_S e(S)
=  n - n \frac12 V + n\frac14 E
=n(1-\frac{V}{2}+\frac{E}{4})$$
Moreover, the above inequality is an equality precisely if each state is a tree.
\end{proof}
The above proof works for any inequality uniformly satisfied by numbers of cells for each state.
In particular, it is of interest in the case where we assume that each state is acyclic, and we have the following
which is also a consequence of a  cohomology computation:
\begin{cor}\label{cor:contractible states}
Suppose $\Gamma$ has a legal system with the property that the flag complex $Q(S)$ is contractible for each state $S$. Then
the generalized form of the above inequality provides that $\curvature(\Gamma)=0$
and hence $\euler(G)=0$. (Here, $\euler(G)=\frac{1}{[G:G']}\euler(G')$ where
$G'$ is a finite index torsion-fee subgroup.)
\end{cor}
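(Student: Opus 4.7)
My plan is to run the same double-counting template that appears in the preceding theorem's proof, but to promote the dimension-wise inequality into a dimension-wise equality by applying contractibility of $Q(S)$ rather than mere connectedness. The combinatorial miracle that makes this work is that the $1/2^k$-frequency with which a clique on $k$ vertices appears in the legal orbit exactly matches the $1/2^k$-weight that a $(k-1)$-simplex receives in the Charney-Davis curvature $\curvature(\Gamma)$.

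First, I would extend the clique-counting observation from the preceding theorem to cliques of every size. For a clique $K$ on $k$ vertices of $\Gamma$, the moves $m_{v_1},\ldots,m_{v_k}$ commute inside the abelian group $M\leq\integers_2^V$, and by Move Property~\eqref{move prop:2} their restrictions to the coordinates indexed by $V(K)$ are the standard basis of $\integers_2^k$. Hence $\langle m_{v_1},\ldots,m_{v_k}\rangle\cong\integers_2^k$, its cosets partition the legal orbit $O$ into blocks of size $2^k$, and precisely one state in each block contains $K$. Consequently $K\subseteq S$ for exactly $n/2^k$ states $S\in O$, where $n=|O|$.

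Next I would apply contractibility. Writing $f_d(Q(S))$ for the number of $d$-simplices of $Q(S)$, with the convention $f_{-1}(Q(S))=1$ for the empty simplex, the hypothesis $\tilde\chi(Q(S))=0$ reads $\sum_{d\geq -1}(-1)^{d+1}f_d(Q(S))=0$ for every $S\in O$. Summing over $S$ and swapping the order of summation, the left-hand side becomes
\[
\sum_{d\geq -1}(-1)^{d+1}\sum_{S\in O}f_d(Q(S))\;=\;\sum_{d\geq -1}\frac{(-1)^{d+1}}{2^{d+1}}\,n\,|K_d(\Gamma)|\;=\;n\,\curvature(\Gamma),
\]
forcing $\curvature(\Gamma)=0$.

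To deduce $\euler(G)=0$, I would apply formula~\eqref{eq:euler curvature} to $X=G'\backslash\widetilde X$, where $G'=\ker(\alpha)$ is torsion-free of index $2^{|V|}$ and $X$ is a $K(G',1)$ because $\widetilde X$ is CAT(0). Every vertex of $X$ has link identified with $\Gamma$, so $\euler(G')=\euler(X)=2^{|V|}\curvature(\Gamma)=0$, and therefore $\euler(G)=\euler(G')/[G:G']=0$. There is no real obstacle: the single step that demands any care is the independence of the moves at the vertices of a clique, which Move Property~\eqref{move prop:2} delivers immediately; everything else is bookkeeping once one notices that the sampling weights and curvature weights coincide.
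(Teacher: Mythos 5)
Your proposal is correct and takes essentially the same route the paper intends: it reruns the clique-counting argument of the preceding theorem, replacing the connectedness inequality $1-v(S)+e(S)\geq 0$ by the equality $\tilde\chi(Q(S))=0$ furnished by contractibility, and uses the matching of the $1/2^{k}$ orbit-frequency of a $k$-vertex clique with the Charney--Davis weight, exactly as the remark preceding the corollary ("the above proof works for any inequality uniformly satisfied by numbers of cells for each state") prescribes. The only cosmetic slip is that the diagonal entries of your "standard basis" come from Move Property~(1) ($v\in m_v$), with Property~(2) supplying only the off-diagonal zeros.
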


\section{In pursuit of an exotic subgroup of a hyperbolic group}\label{sec:7}
In this section we pose a problem aiming to use Theorem~\ref{thm:win fiber} to provide a
hyperbolic group $G(\Gamma)$ such that the kernel $K$ of $G'\to \integers$ has finite $K(\pi,1)$ but is not hyperbolic.
\begin{prob}
Find $\Gamma$ having a legal system whose states have contractible clique-complexes,
such that the clique-complex $Q(\Gamma)$ is flag-no-square but does not contain a 2-sphere.
And such that $\vcd(G(\Gamma))\geq 3$. \begin{com}I do not yet know of a quick way to see that $\cd(G)>2$ by looking at $\Gamma$. I presume it suffices to see that $\homology_2(Q(\Gamma))\neq 0$ or perhaps that
$Q(\Gamma)$ does not deformation retract to a graph.\end{com}
\end{prob}

We do not know how to make a flag-no-square complex that is a closed pseudo-manifold of dimension $\geq3$ for which there is a legal system. For that matter we do not know how to make a flag-no-square (closed pseudo-manifold) of $\cd(Q(\Gamma))\geq 3$ with $\curvature(\Gamma)=0$. Note that if $\Gamma$ has a legal system and the clique complex of each state is contractible then $\curvature(\Gamma)=0$ by Corollary~\ref{cor:contractible states}.
In this connection we note that \cite[Conj~6.1]{LutzNevo14} implies that there does not exist a flag-no-square simplicial structure on $S^3$ with $\curvature(\Gamma)=0$.
\begin{com} and there is no flag-no-square structure on $S^n$ for $n>3$ or $n>4$
reference in Przytycki-Swiatkowski? or Davis-Jankiewicz?\end{com}

\begin{prob}
Find a finite simplicial graph $\Gamma$ with the property that:
\begin{enumerate}
\item	$\curvature(\Gamma)\leq 0$  (we are primarily interested in $\curvature(\Gamma)=0$.)
\item 	The clique-complex of $\Gamma$ is flag-no-square  (i.e. any length 4 cycle bounds two triangles.)
\item $\Gamma$ is not planar
\item Every edge of $\Gamma$ lies on a triangle (preferably at least two).
\end{enumerate}
\end{prob}

\begin{prob}
Is there $\Gamma$ as above with the more general property that no two $4$-cycles (without triangles) share non-adjacent vertices? And such that there is no 2-sphere in the space obtained by filling in all 3-cycles and 4-cycles?

It might be possible to find a legal system for such $\Gamma$ and use that to create a word-hyperbolic group with a non-hyperbolic normal subgroup with strong finiteness properties.
\end{prob}

\section{Random graphs}\label{sub:random}\label{sec:8}

In this section we show that Erd\H{o}s-R\'enyi random graphs for a wide range of densities almost surely contain legal systems. Recall that the model $\mcl{G}(n,p)$ consists of graphs with vertex set $V=\{1,2,\ldots,n\}$ in which edges are chosen independently with probability $p$. We are interested in the asymptotic behaviour of the model and so assume $n \to \infty$ throughout.

We will show that if $p$ is reasonably far away from both zero and one, then a graph in $\mcl{G}(n,p)$ almost surely has a legal system.
While our bounds on $p$ are not the best possible, they significantly imply that almost every labelled graph on $n$ vertices has a legal system, as the model $\mcl{G}(n, 1/2)$ corresponds to the uniform distribution on such graphs.
The \emph{complement} of a graph $\Gamma(V,E)$ is the graph with vertex set $V$ and where two distinct vertices $v,w$ are joined by an edge if and only if there is no edge between them in $\Gamma$.

A \emph{matching} in a graph $\Gamma(V,E)$ is a set $M\subseteq E$ such that no vertex in $V$ is incident to more than one edge of $M$. A \emph{perfect matching} in $\Gamma(V,E)$ is a matching $M$ such that every vertex of $V$ is incident to an edge of $M$.
We will need the following classical result about the existence of perfect matchings in random graphs.

\begin{thm}[Erd\H{o}s-R\'enyi~\cite{ERMatching66}]\label{thm:Gnp1}
	Let $p =(\log{n}+\omega(n))/n$ where $\omega(n) \to \infty$.  If $n$ is even then a graph in $\mcl{G}(n,p)$ almost surely contains a perfect matching.
\end{thm}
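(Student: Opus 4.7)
The plan is to combine a first moment bound on the number of isolated vertices with Tutte's matching theorem, exploiting the fact that at this density the absence of isolated vertices essentially forces a perfect matching.

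For the first moment, let $X$ count the isolated vertices of $G\in\mcl{G}(n,p)$. Then
\[
\mathbb{E}[X]=n(1-p)^{n-1}\sim ne^{-pn}=e^{-\omega(n)}\longrightarrow 0,
\]
so by Markov's inequality $X=0$ almost surely. A parallel computation, bounding the expected number of components of size at most $n/2$, also gives that $G$ is a.s.\ connected.

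Next, by Tutte's theorem, if $G$ has no perfect matching then there exists $S\subseteq V$ with $o(G-S)>|S|$, where $o$ counts odd components. Fix a minimum such witness of size $k$. The case $k=0$ is excluded by connectedness and $n$ being even. For $k\ge 1$ note that $k<n/2$, else $o(G-S)\le n-k\le k$; the $k+1$ odd components $C_1,\dots,C_{k+1}$ of $G-S$ have no edges to $V\setminus(S\cup C_i)$, so the configuration occurs with probability at most $(1-p)^{\sum_i c_i(n-k-c_i)}$ times spanning-tree factors for the internal connectedness of each $C_i$, where $c_i=|C_i|$.

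The main obstacle is that the naive union bound over $(S,C_1,\dots,C_{k+1})$ is not tight enough: the dominant singleton-component case ($c_i=1$) only reproduces the isolated-vertex first moment up to a polynomial loss, which is insufficient when $\omega(n)$ grows slowly (e.g.\ $\omega(n)=\sqrt{\log n}$). The classical fix, due to Erd\H{o}s--R\'enyi~\cite{ERMatching66}, is to first establish two structural properties that hold a.s.\ at this density: low-degree vertices are pairwise non-adjacent, and any two low-degree vertices have disjoint neighborhoods. A Tutte witness of size $k$ then forces $k+1$ singleton odd components to come from pairwise well-separated low-degree vertices, a localized configuration whose probability is controlled by a careful second moment calculation. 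This refinement closes the gap and shows the failure probability is $o(1)$.
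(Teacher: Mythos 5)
The paper does not prove this statement at all: it is imported verbatim as a classical result of Erd\H{o}s--R\'enyi \cite{ERMatching66}, so there is no in-paper argument to compare yours against. Judged on its own, your plan follows the standard modern route (first moment for isolated vertices, connectivity, Tutte's theorem, union bound over witnesses, then the low-degree structural lemmas to rescue the union bound), and the outline is sound. Your preliminary reductions are also correct: $\mathbb{E}[X]=n(1-p)^{n-1}\le ne^{-p(n-1)}=e^{-\omega(n)+o(1)}\to 0$, the case $S=\emptyset$ is killed by connectivity plus $n$ even, and a minimum Tutte witness has $|S|<n/2$.

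The gap is that the decisive step is not carried out but delegated back to the very citation the theorem already carries, so as a proof the proposal is circular in effect. Concretely, three things are asserted rather than proved. First, the structural lemmas themselves: you must fix a degree threshold $d$ (something like $d=\lfloor \log n/100\rfloor$ works at this density), prove that a.s.\ no two vertices of degree $\le d$ are adjacent or share a neighbour, and prove the minimum degree is $\ge 1$; each is a first-moment computation, but the choice of $d$ and the verification that $n^2\binom{n}{?}p^2(\dots)\to 0$ is where the dependence on $p=(\log n+\omega(n))/n$ actually enters. Second, you never close the all-singleton case, even though once the lemmas are granted it closes in one line: if $C_1,\dots,C_{k+1}$ are all singletons $v_1,\dots,v_{k+1}$, then $N(v_i)\subseteq S$, each $|N(v_i)|\ge 1$, and the neighbourhoods are pairwise disjoint (each $v_i$ has degree $\le k$, and you need $k\le d$, which forces a separate treatment of $k>d$), so $k+1\le\sum_i|N(v_i)|\le|S|=k$, a contradiction. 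Third, the mixed and large-$k$ regimes --- witnesses where some odd components have $c_i\ge 2$ or $3$, and witnesses with $d<k<n/2$ --- require their own union bounds over the choices of $S$ and of the components, with the $(1-p)^{\sum_i c_i(n-k-c_i)}$ factor beating the entropy of the choices; this is the ``careful second moment calculation'' you name without performing. Until those three pieces are written out, the argument is an accurate table of contents for the Erd\H{o}s--R\'enyi proof rather than a proof.
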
	

Further, we will need a result on connectivity of random bipartite graphs.
Let $\mcl{G}(n_1,n_2,p)$ be a random graph model consisting of bipartite graphs with bipartition $(A,B)$ for a pair of disjoint sets $A$ and $B$, with $|A|=n_1$ and $|B|=n_2$, where the edges joining $A$ and $B$ are chosen independently with probability $p$.

\begin{thm}[Pal{\'a}sti~\cite{Palasti64}]\label{thm:Gnp2}
Let $0\leq n_1 \leq n_2$, and let $p =(\log{n_2}+\omega(n_2))/n_1$ where $\omega(n) \to \infty$. Then a graph in $\mcl{G}(n_1,n_2,p)$  is almost surely connected.
\end{thm}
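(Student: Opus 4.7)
My plan is a first-moment calculation. A bipartite graph $G \sim \mathcal{G}(n_1,n_2,p)$ is disconnected precisely when some nonempty proper subset $W \subsetneq A \sqcup B$ is a union of connected components of $G$, i.e., no edge of $G$ crosses the cut between $W$ and its complement. Writing $s = |W \cap A|$ and $t = |W \cap B|$, this cut has $s(n_2-t)+(n_1-s)t$ possible edges, so the event has probability $(1-p)^{s(n_2-t)+(n_1-s)t}$. Using the symmetry $W \leftrightarrow V(G)\setminus W$ to restrict to $s + t \le (n_1+n_2)/2$, the union bound gives
\[
\Pr[G \text{ disconnected}] \le 2\sum_{\substack{s+t\le (n_1+n_2)/2\\ (s,t)\ne(0,0)}}\binom{n_1}{s}\binom{n_2}{t}(1-p)^{s(n_2-t)+(n_1-s)t}.
\]

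First I would dispatch the threshold-carrying terms $s=0$ and $t=0$, which correspond to isolated vertices. The hypothesis $pn_1 = \log n_2 + \omega(n_2)$ gives $n_2 e^{-pn_1} = e^{-\omega(n_2)} \to 0$, so the $s=0$ case reduces to a geometric series
\[
\sum_{t\ge 1}\binom{n_2}{t}(1-p)^{n_1 t} \le \sum_{t\ge 1}(n_2 e^{-pn_1})^t = \sum_{t\ge 1} e^{-\omega(n_2) t} = o(1).
\]
The $t=0$ case is analogous, because $n_1 \le n_2$ and $pn_2 \ge pn_1$ together imply $n_1 e^{-pn_2} \le e^{-\omega(n_2)}$.

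For the mixed range $s,t \ge 1$ I would fix $s$ and collapse the $t$-sum via the binomial theorem:
\[
\sum_{t=0}^{n_2}\binom{n_2}{t}(1-p)^{s(n_2-t)+(n_1-s)t} = (1-p)^{sn_2}\bigl(1+(1-p)^{n_1-2s}\bigr)^{n_2}.
\]
By the complement symmetry on $A$ it suffices to consider $s \le n_1/2$, where $(1-p)^{n_1-2s} \le e^{-p(n_1-2s)}$ and the key identity $n_2 e^{-p(n_1-2s)} = e^{2ps-\omega(n_2)}$ reduces each inner sum to $e^{-psn_2}\exp\bigl(e^{2ps-\omega(n_2)}\bigr)$. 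The bound $\binom{n_1}{s}e^{-psn_2} \le e^{-s\omega(n_2)}/s!$ (which follows from $n_1 \le n_2$ and $pn_2 \ge pn_1$) then splits the outer $s$-sum into two regimes: for $s$ with $2ps \le \omega(n_2)$ the iterated-exponential factor is $O(1)$ and the $s$-sum is dominated by a geometric series with ratio $e^{-\omega(n_2)}$, while for the remaining $s$ up to $n_1/2$ the crude bound $2^{n_1+n_2} e^{-pn_1 n_2/2}$ handles the tail since $pn_1 n_2 = (\log n_2 + \omega(n_2))n_2$ dominates $(n_1+n_2)\log 2$ for $n_2$ large.

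The main obstacle is verifying that these two regimes of $s$ dovetail cleanly at the transition point $2ps \sim \omega(n_2)$, so that the total $s$-sum of $\frac{1}{s!}e^{-s\omega(n_2)}\exp\bigl(e^{2ps-\omega(n_2)}\bigr)$ is $o(1)$ without a hidden $\omega(n_2)$-dependent loss at the boundary. Once this is checked, combining the isolated-$A$, isolated-$B$, and mixed estimates yields $\Pr[G \text{ disconnected}] \to 0$.
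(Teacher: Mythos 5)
The paper does not prove this statement at all: it is quoted as a classical result of Pal\'asti and used as a black box, so there is no in-paper argument to compare yours against. Your first-moment proof is therefore self-contained where the paper is not, and its skeleton --- a union bound over vertex cuts, separate treatment of the isolated-vertex terms $s=0$ and $t=0$, the binomial collapse of the $t$-sum, and a split of the $s$-sum into a small-$s$ and a large-$s$ regime --- is sound. The $s=0$, $t=0$ computations and the regime where $2ps\le\omega(n_2)$ are all correct as written.

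The step that fails as written is the tail regime, and the worry you flag about the two regimes dovetailing is justified. Your per-$s$ bound $\frac{1}{s!}e^{-s\omega(n_2)}\exp\bigl(e^{2ps-\omega(n_2)}\bigr)$ genuinely diverges near $s=n_1/2$ when $n_1\ll n_2$: there $e^{2ps-\omega(n_2)}=n_2e^{-p(n_1-2s)}$ approaches $n_2$, so the double-exponential factor is roughly $e^{n_2}$, while the damping you have kept is only $e^{-s\omega(n_2)}/s!=e^{-\Theta(n_1(\omega(n_2)+\log n_1))}$; for example with $n_1=\sqrt{n_2}$ and $\omega(n_2)=\log\log n_2$ the bound tends to infinity. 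The loss is introduced when you replace $n_1e^{-pn_2}$ by $e^{-\omega(n_2)}$, which is extremely wasteful for unbalanced parts. Moreover the crude bound you propose for the tail, $2^{n_1+n_2}e^{-pn_1n_2/2}$, is only the trivial bound evaluated at the single endpoint $s=n_1/2$; it is not an upper bound for the terms with $\omega(n_2)/(2p)<s<n_1/2$. The repair is short: for every $s$ in the tail regime use $\binom{n_1}{s}(1-p)^{sn_2}\bigl(1+(1-p)^{n_1-2s}\bigr)^{n_2}\le 2^{n_1}e^{-psn_2}2^{n_2}\le 4^{n_2}e^{-\omega(n_2)n_2/2}$, where the last step uses $ps>\omega(n_2)/2$; summing over the at most $n_2$ admissible values of $s$ gives $n_2\,4^{n_2}e^{-\omega(n_2)n_2/2}\to 0$ because $\omega(n_2)\to\infty$. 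With that substitution the two regimes meet cleanly and the proof closes.
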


\begin{thm}[Bollob{\'a}s~\cite{Bollobas80}]\label{thm:maxdegree}
The maximum valence of a graph in $\mathcal G(n,p)$ is almost surely $O(pn)$.
\end{thm}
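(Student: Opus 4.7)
The plan is to combine a multiplicative Chernoff concentration bound with a union bound over vertices. For a fixed vertex $v$ of $\Gamma \in \mathcal{G}(n,p)$, the valence $\deg(v)$ is a sum of $n-1$ independent $\mathrm{Bernoulli}(p)$ indicators (one for each potential edge from $v$), so $\deg(v) \sim \mathrm{Bin}(n-1,p)$ with mean $\mu = (n-1)p$.

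Next, I would apply the multiplicative Chernoff bound: for any fixed constant $\delta > 0$,
$$
\Pr\bigl[\deg(v) \geq (1+\delta)\mu\bigr] \;\leq\; \exp\!\Bigl(-\tfrac{\delta^2 \mu}{2+\delta}\Bigr).
$$
Choosing $\delta = 2$, say, gives $\Pr[\deg(v) \geq 3\mu] \leq e^{-\mu}$. Since this bound depends only on $\mu$ and not on $v$, a union bound over the $n$ vertices yields
$$
\Pr\!\Bigl[\max_{v} \deg(v) \geq 3\mu\Bigr] \;\leq\; n\,e^{-\mu} \;=\; \exp\bigl(\log n - (n-1)p\bigr),
$$
which tends to $0$ as $n \to \infty$ provided $pn / \log n \to \infty$. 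This immediately gives $\max_v \deg(v) \leq 3(n-1)p = O(pn)$ almost surely.

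The main thing to be careful about (and indeed the only real obstacle) is the implicit range of $p$ in the statement. When $pn$ is of order $\log n$ or smaller, the Chernoff-plus-union bound above is no longer strong enough, and in fact for bounded $pn$ the maximum valence is known to be of order $\log n / \log\log n$, which is $\omega(pn)$. So the theorem as stated should be read with the mild assumption that $pn$ grows faster than $\log n$---precisely the regime of interest in light of Theorems~\ref{thm:Gnp1} and~\ref{thm:Gnp2}. Under that hypothesis the sketch above suffices; the cited reference \cite{Bollobas80} of Bollob{\'a}s establishes more refined asymptotics, but only the weak form above is needed for the applications in this section.
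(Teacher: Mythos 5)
The paper offers no proof of this statement at all: it is quoted as a known result from Bollob\'as's book, so there is nothing internal to compare your argument against. Your Chernoff-plus-union-bound computation is the standard self-contained derivation and is correct in the regime it covers, and you are right that the theorem as literally stated is false without a restriction on $p$ (for $pn=O(1)$ the maximum valence is of order $\log n/\log\log n$, not $O(pn)$).

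However, the caveat you attach is not as harmless as you suggest, and this is where the real gap lies. You propose reading the theorem with the hypothesis $pn/\log n\to\infty$, "the regime of interest in light of Theorems~\ref{thm:Gnp1} and~\ref{thm:Gnp2}." But the only place Theorem~\ref{thm:maxdegree} is actually invoked is in Lemma~\ref{lem:high}, where it is applied to the complement graph $H\in\mathcal G(n,q)$ with $q=O(\log n/n)$ --- that is, $qn=O(\log n)$, precisely the regime your hypothesis excludes (and $qn$ may even tend to $0$ there, since Case~2 only assumes $q\neq o(n^{-3/2})$). So your proof, as written, does not justify the one application the paper makes of the theorem. The fix is cheap but changes the statement you should prove: show that almost surely $\Delta(\mathcal G(n,p))=O(pn+\log n)$. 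For this, replace the fixed-$\delta$ multiplicative Chernoff bound by the elementary binomial tail estimate
\[
\Pr[\deg(v)\geq k]\;\leq\;\binom{n-1}{k}p^{k}\;\leq\;\Bigl(\frac{e(n-1)p}{k}\Bigr)^{k},
\]
applied with $k=K\max(pn,\log n)$ for a large constant $K$; this is at most $n^{-2}$, the union bound over vertices goes through, and the conclusion $d_H=O(qn+\log n)=O(\log n)$ is exactly what Lemma~\ref{lem:high} needs.
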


Let $\Gamma=(V,E)$ be a graph, a \emph{perfect antimatching} is   a partition $\mcl{M} = \{S_1,\ldots,S_k\}$ of $V$ such that each $S_i$ consists of precisely two nonadjacent vertices.
  A set $T \subseteq V$ is an \emph{$\mcl{M}$-transversal} if  $|T \cap S_i|=1$ for each $i$.

A perfect antimatching $\mcl{M}$ is \emph{lawful} if every $\mcl{M}$-transversal is a legal state.
Note that the set of all $\mcl{M}$-transversals forms an orbit of  the subgroup of $2^{V}$ generated by  $\mcl{M}$.
Thus if  $\mcl{M}$ is lawful then it is a legal system.

\begin{thm}\label{thm:random} Suppose
\begin{equation}\label{inequalityp1} \frac{(2\log{n})^{1/2}+\omega(n)}{n^{1/2}} \leq p \leq  1-\frac{1}{n^{1.99}}
\end{equation}
for some $\omega(n)$ with $\omega(n) \to \infty$.
Then a graph in $\mcl{G}(n,p)$ almost surely contains a legal system.	
\end{thm}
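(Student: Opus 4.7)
The plan is to exhibit, almost surely, a \emph{lawful} perfect antimatching $\mcl{M}$ of $G$; as observed just before the theorem statement, this immediately yields a legal system. First, I apply Theorem~\ref{thm:Gnp1} to the complement $\overline{G}\sim\mcl{G}(n,1-p)$, which produces a perfect matching of $\overline{G}$---equivalently a perfect antimatching $\mcl{M}=\{P_1,\ldots,P_{n/2}\}$ of $G$---almost surely whenever $n(1-p)-\log n\to\infty$. Odd $n$ is handled by deleting a single vertex; the edge case where $p$ is so close to $1$ that this threshold fails needs a separate, direct argument exploiting that $G$ is then nearly complete. Orient each $P_i=\{a_i,b_i\}$ so that transversals correspond to $\sigma\in\{a,b\}^{n/2}$; since $V\setminus T(\sigma)=T(\bar\sigma)$, $\mcl{M}$ is lawful exactly when $G[T(\sigma)]$ is connected for every $\sigma$.

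I would next reformulate connectedness as the absence of small components. Call $S\subseteq V$ \emph{admissible} if $|S\cap P_i|\leq 1$ for every $i$. Then $G[T(\sigma)]$ is connected for every $\sigma$ if and only if no admissible $S$ of size $1\leq s\leq n/4$ is a connected component of $G[T]$ for some transversal $T\supseteq S$ (by a trivial pigeonhole on component sizes in a set of $n/2$ vertices). The decisive combinatorial observation is that a bad pair $(S,T)$ exists if and only if no pair $P_j$ disjoint from $S$ satisfies $\{a_j,b_j\}\subseteq N(S):=\bigcup_{u\in S}N(u)$: existence of such a $P_j$ forces every transversal containing $S$ to meet $N(S)\setminus S$ through $P_j$, while absence of such a $P_j$ lets one build a bad transversal by picking from each off-$S$ pair the vertex outside $N(S)$.

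The last step is a union bound. Conditioning on $\mcl{M}$, the non-antimatching edges are independent Bernoulli$(p)$, so the failure probability of a fixed admissible $S$ of size $s$ is at most $\exp(-q_s(n/2-s))$ with $q_s=(1-(1-p)^s)^2$, and there are at most $n^s$ admissible $S$ of size $s$. Splitting into three regimes controls the sum: for $s=1$ the contribution is $n\cdot(1-p^2)^{n/2-1}\leq \exp(-\omega(n)\sqrt{2\log n}+o(1))$, using the hypothesis $p^2n/2\geq\log n+\omega(n)\sqrt{2\log n}$ obtained by squaring the lower bound on $p$; for $2\leq s\leq 1/p$ one has $q_s\geq(sp)^2(1+o(1))$, giving per-size bound $n^{-s(s-1)}$, which is summable; for $1/p\leq s\leq n/4$ the sharper estimate $1-q_s\leq 2(1-p)^s$ makes $(2(1-p)^s)^{n/2-s}$ super-exponentially small and dominates the $n^s$ union cost.

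The main obstacle is the critical case $s=1$: it is tight and pinpoints the precise density threshold $p\geq\sqrt{2\log n/n}+\omega(n)/\sqrt{n}$, since any weaker hypothesis permits an isolated vertex in some transversal with nonzero limiting probability. The bookkeeping across the three regimes of $s$ is the remaining nuisance, and Bollob\'as's maximum-degree estimate (Theorem~\ref{thm:maxdegree}) together with Pal{\'a}sti's bipartite connectivity (Theorem~\ref{thm:Gnp2}) applied to $G[A,B]$ with $A=\{a_i\}$ and $B=\{b_i\}$ are useful for cleanly organizing the middle regime and for ensuring that the constructed antimatching is well-spread with respect to neighborhoods.
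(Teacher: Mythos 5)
Your overall strategy in the intermediate range is recognizably cousin to the paper's Lemma~\ref{lem:intermediate} (both hinge on lawful perfect antimatchings obtained from Theorem~\ref{thm:Gnp1} applied to the complement), but your verification of lawfulness is different and it contains a genuine gap. You assert that ``conditioning on $\mcl{M}$, the non-antimatching edges are independent Bernoulli$(p)$.'' This is not true as stated: the antimatching you condition on is \emph{found by inspecting the complement graph}, so the event that a particular partition into pairs is the antimatching produced depends on all edges of $\Gamma$, and conditioning on it biases the distribution of the remaining edges. Conditioning on the weaker event ``these $n/2$ pairs are non-edges'' does leave the other edges independent, but then you must union-bound over all candidate antimatchings, and the expected number of perfect antimatchings is of order $(nq)^{n/2}$ up to exponential factors, which swamps your per-antimatching failure probability. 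This is exactly the issue the paper's $A\sqcup B$ split is designed to evade: the antimatchings $\mcl{M}_A,\mcl{M}_B$ are measurable with respect to the within-part edges, while lawfulness is certified using only the cross edges (via the auxiliary bipartite graphs and Theorem~\ref{thm:Gnp2}), so the two rounds of exposure are genuinely independent. Your argument needs an analogous two-round device; without one, the first-moment computation, although correct as a computation (your equivalence ``a bad transversal through $S$ exists iff no off-$S$ pair has both vertices in $N(S)$'' is right, and the three regimes of $s$ do close up), is not applied to the right probability space.

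The second gap is the high-density regime. When $p>1-\frac{2\log n+\omega(n)}{n}$ the complement almost surely has isolated vertices, so no perfect antimatching exists and your construction is unavailable all the way up to $p=1-n^{-1.99}$. You flag this as needing ``a separate, direct argument,'' but that argument is a substantive part of the theorem: the paper's Lemma~\ref{lem:high} replaces the perfect antimatching by a \emph{maximal} matching $M$ of the complement together with singleton moves on the leftover clique $D$, and then needs the maximum-degree bound (Theorem~\ref{thm:maxdegree}) to show $|M|=O(\log n)$, a second-moment-free counting argument to rule out a large edgeless set in the complement, and a final estimate to attach $D$ to every state. None of this is sketched in your proposal, so roughly the range $1-\frac{2\log n+\omega(n)}{n}<p\le 1-n^{-1.99}$ is unproven.

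If the exposure issue were repaired (say, by running your union bound only over cross edges after an $A\sqcup B$ split), your computation would have one advantage worth noting: the binding case is $s=1$, giving the requirement $p^2n/2\ge\log n+\omega$, which matches the constant $\sqrt{2\log n/n}$ in the theorem statement directly, whereas the paper's route passes through connectivity of a bipartite graph with smaller part of size about $n/4$ and is correspondingly less tight at the level of constants. But as written, both the independence claim and the top of the density range are missing ideas rather than bookkeeping.
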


Theorem~\ref{thm:random} is proven using colored legal system with at most two vertices of each color.
We hoped that by considering
colored legal systems with color classes of higher cardinality  one could decrease the lower bound. And indeed, we refer
the reader to Theorem~\ref{PGK} below:

\begin{proof}
The proof is broken into two cases that employ various types of legal systems.
The case where
$p \leq  1-\frac{2\log{n}+\omega(n)}{n}$ is treated in Lemma~\ref{lem:intermediate}.
The case where $ p \geq 1 -\frac{2\log{n}+\omega(n)}{n}$ is treated in Lemma~\ref{lem:high} under the assumption that $\Gamma$ is not a complete graph. Note that that $\Gamma$ is almost surely not complete if $p\leq 1-\frac{1}{n^{1.99}}$  since  the probability that $\Gamma$ is a complete graph equals $p^{\frac{n(n-1)}{2}}\leq e^{-(1-p)\frac{n(n-1)}{2}}\to 0$ as $n\to \infty$.
\end{proof}

\begin{lem}[Intermediate probability]\label{lem:intermediate}
Suppose $p$ satisfies the following inequality
\begin{equation}\label{inequalityp2}\frac{(2\log{n})^{1/2}+\omega(n)}{n^{1/2}} \leq p \leq  1-\frac{2\log{n}+\omega(n)}{n}
\end{equation}
for some $\omega(n)$ with $\omega(n) \to \infty$. Then a graph $\Gamma$ in $\mcl{G}(n,p)$ almost surely contains a legal system.
\end{lem}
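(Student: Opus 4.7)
The plan is to exhibit, with probability tending to $1$, a perfect antimatching $\mcl{M}$ of $\Gamma$ that is lawful, which is then a legal system by definition. The hypothesis $p\le 1-(2\log n+\omega(n))/n$ gives $1-p\ge (\log n+\omega'(n))/n$ for some $\omega'(n)\to\infty$, so applying Theorem~\ref{thm:Gnp1} to the complement $\bar\Gamma\sim\mcl{G}(n,1-p)$ shows that $\bar\Gamma$ almost surely contains a perfect matching $\mcl{M}=\{P_1,\ldots,P_{n/2}\}$ (the $n$ odd case is handled by setting aside a single vertex). I would take $\mcl{M}$ uniformly at random among perfect matchings of $\bar\Gamma$.

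The critical reformulation is that lawfulness of $\mcl{M}$ is equivalent to the following property $(\star)$: for every $A\subset V$ with $1\le|A|\le n/4$ meeting each $\mcl{M}$-pair in at most one vertex, some $\mcl{M}$-pair is entirely contained in the open $\Gamma$-neighborhood $N_\Gamma(A)$. Indeed, if a transversal $T$ decomposes as $T=A\sqcup B$ with $|A|\le|B|$ and no $\Gamma$-edges between $A$ and $B$, then $N_\Gamma(A)\cap T=\emptyset$, so every $w\in N_\Gamma(A)$ lies outside $T$ and its $\mcl{M}$-partner $w'$ lies in $T$; a pair $\{w,w'\}\subset N_\Gamma(A)$ would then force both $w$ and $w'$ out of $T$, contradicting the transversal property. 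Conversely, any partial transversal $A$ with no $\mcl{M}$-pair inside $N_\Gamma(A)$ extends to an explicit disconnected transversal.

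The remaining step is a first-moment estimate over such sets $A$. Since $\bar\Gamma$ is sufficiently dense that a uniform random matching inside it behaves, for the purpose of counting pairs inside a fixed vertex set, like a uniform matching in $K_n$, the expected number of $\mcl{M}$-pairs inside $N_\Gamma(A)$ is asymptotic to $|N_\Gamma(A)|^2/(2n)$, and standard concentration gives $|N_\Gamma(A)|\sim (1-(1-p)^{|A|})n$. For $|A|=1$ this expected count is $\sim p^2 n/2$, and the lower bound $p\ge \sqrt{2\log n/n}+\omega(n)/\sqrt n$ is calibrated so that $p^2 n/2 \ge \log n+\omega(n)\sqrt{2\log n}/2+o(1)$, yielding a Poisson-type tail probability of $o(1/n)$; a union bound over the $n$ choices of $A=\{v\}$ therefore vanishes. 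For $|A|=j\ge 2$ the expected pair count grows at least as $j^2\log n$, which easily defeats the $\binom{n}{j}\le n^j$ union-bound factor, and for $j$ with $jp\gtrsim 1$ the neighborhood $N_\Gamma(A)$ nearly fills $V$ and the bound becomes trivial.

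The main obstacle is the tight case $|A|=1$: the $\omega(n)$ slack in the hypothesis on $p$ is precisely what is needed for the union bound over vertices to succeed, as without it the expected pair count would be exactly $\log n$ and the corresponding probability exactly $1/n$. A secondary technical point is to rigorously justify the random-matching heuristic when $\bar\Gamma$ is close to its matching threshold (near the upper end of the allowed $p$-range); this can be addressed either by a switching argument internal to $\bar\Gamma$, or by a two-round exposure of $\Gamma$ that decouples the construction of $\mcl{M}$ from the randomness used to verify $(\star)$.
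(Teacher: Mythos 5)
Your high-level idea (a perfect antimatching obtained as a perfect matching of the complement via Theorem~\ref{thm:Gnp1}, then a verification whose threshold is $p^2n\sim 2\log n$) is the right one, and you have correctly diagnosed why each of the two bounds on $p$ appears. However, there are two problems. The first is local: your reformulation $(\star)$ is wrong as stated. If $T=A\sqcup B$ with no $\Gamma$-edges between $A$ and $B$, it is not true that $N_\Gamma(A)\cap T=\emptyset$, since $A$ may have internal edges; a pair $\{w,w'\}\subseteq N_\Gamma(A)$ with $w\in A$ and $w'\in V-T$ produces no contradiction. The condition must be phrased in terms of the external neighborhood $N_\Gamma(A)\setminus A$ (a pair contained there has both elements outside $T$, which does contradict transversality). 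This is fixable and barely changes the counting.

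The second problem is a genuine gap. Your matching $\mcl{M}$ is a (randomized) function of $\bar\Gamma$, hence of the entire edge set of $\Gamma$, so the event $\{w,w'\}\in\mcl{M}$ is not independent of the edges from $A$ to $w,w'$ that define $N_\Gamma(A)$; the assertion that $\mcl{M}$ ``behaves like a uniform matching in $K_n$'' is exactly the point that needs proof, and it is least plausible precisely at the upper end of the range, where $\bar\Gamma$ sits at its matching threshold and the matching is essentially forced on many vertices. You defer this to ``a switching argument or a two-round exposure,'' but a naive two-round exposure does not decouple anything here, because a perfect matching of $\bar\Gamma=\bar\Gamma_1\cap\bar\Gamma_2$ cannot be found without examining all of $\Gamma$. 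The paper's proof achieves the decoupling structurally: it partitions $V$ into two halves, finds a perfect antimatching inside each half using only the within-half edges (Theorem~\ref{thm:Gnp1} applied to each induced complement), and then verifies lawfulness using only the cross edges, which are still unexposed and independent. Concretely, it forms the auxiliary bipartite graph joining a vertex $u$ of one half to a pair $S$ of the other half's antimatching when $u$ is adjacent to both elements of $S$; this is a fresh random bipartite graph with edge probability $p^2$, whose connectivity (Theorem~\ref{thm:Gnp2}) yields that every transversal is connected. Your argument needs either this kind of structural independence or a genuine quantitative analysis of the uniform random perfect matching of a sparse random graph; as written, neither is supplied.
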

\begin{proof}
We will first work under the assumption that $n$ is even, in which case we actually show that under Condition~\eqref{inequalityp2}, a random graph in $\mcl{G}(n,p)$ almost surely contains a lawful perfect antimatching.
We will adjust the argument to handle the case when $n$ is odd at the end of the proof.

We start by partitioning $V=\{1,2,\ldots,n\}$ into two even cardinality parts $A$ and $B$ each of size at most $n/2+1$. Since a perfect antimatching corresponds to a perfect matching of a complement, by the right inequality of Equation~\eqref{inequalityp2} and Theorem~\ref{thm:Gnp1} the subgraphs of $\Gamma$ induced by both $A$ and $B$ almost surely contain perfect antimatchings. We assume that  such antimatchings exist and  denote them by $\mcl{M}_A$ and $\mcl{M_B}$.

We will show that $\mcl{M} = \mcl{M}_A \cup \mcl{M_B}$ is almost surely lawful. Let $E_1$ be the set of edges with one end in $A$ and the other in $B$. Note that each edge in $E_1$ is chosen independently with probability $p$.

We construct an auxiliary bipartite graph $\Lambda'_A$ with bipartition $(A,\mcl{M}_B)$  by joining $u \in A$ and  $S \in M_B$ by an edge if $u$ is adjacent to both elements of $S$. Then $\Lambda'_A$ is a random bipartite graph in $\mcl{G}(|B|/2,|A|,p^2)$ and by  Theorem~\ref{thm:Gnp2} it is almost surely connected. Define $\Lambda_B'$ in $\mcl{G}(|A|/2,|B|,p^2)$ analogously.

We will show that if  $\Lambda'_A$ and  $\Lambda_B'$  are both connected then $\mcl{M}$ is lawful. Indeed, let $T$ be an $\mcl{M}$-transversal and let $\Gamma_T$ be the subgraph of $\Gamma$ induced by $T$. Consider $u,v \in T \cap A$, and let $P$ be a path in  $\Lambda'_A$ with ends $u$ and $v$. Replacing the vertices of $P$ in $\mcl{M}_B$ by corresponding elements of $T$, we obtain a path in $\Gamma_T$ from $u$ to $v$. It follows that $T \cap A$ lies in one component of $\Gamma_T$. Similarly, $T \cap B$ lies in one component of $\Gamma_T$. Moreover, by construction of $\Lambda'_A$ there certainly exists an edge of $\Gamma$ from a vertex in $T \cap A$  to a vertex in  $T \cap B$. Thus  $\Gamma_T$ is connected, as desired.

Finally, if $n$ is odd, we apply the above argument to the subgraph $\Gamma'$ of the random graph $\Gamma$ induced by $V - \{n\}$. It implies that $\Gamma'$ almost surely contains a lawful perfect antimatching $\mcl{M}$. Then $\mcl{M} \cup \{\{n\}\}$ is almost surely a legal system on $\Gamma$, as $n$ is almost surely joined to both vertices of some $S\in\mcl M$.
\end{proof}	

We are grateful to  Roman Glebov and Gonzalo Fiz Pontiveros for describing arguments that explain how to handle random graphs at high density.
In particular, the following is due to Roman Glebov:
\begin{lem}\label{lem:high}
Let $q=1-p$.
Suppose $$q = O(\frac{\log n}{n}).$$
Then a graph $\Gamma$ in $\mathcal G(n,p)$ almost surely either is a complete graph or contains a legal system.
\end{lem}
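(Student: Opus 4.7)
The plan is to construct a legal system of $\Gamma$ explicitly by exploiting the extreme sparsity of the complement $F = \bar{\Gamma}$. If $\Gamma$ is complete the conclusion is trivial, so assume $F$ has at least one edge; by Theorem~\ref{thm:maxdegree} applied to $F \sim \mathcal{G}(n,q)$, the maximum degree $\Delta(F) = O(\log n)$ almost surely. The first step is a reduction: let $U$ be the set of \emph{universal} vertices of $\Gamma$ (equivalently, the isolated vertices of $F$), and $W = V \setminus U$. I claim that any legal system on $\Gamma[W]$ extends to one on $\Gamma$ by adjoining $m_u = \{u\}$ for each $u \in U$: since a universal vertex attaches connectedly to either side of any nonempty state, each extended orbit state is legal in $\Gamma$ if and only if its restriction to $W$ is legal in $\Gamma[W]$. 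This reduces the problem to finding a legal system for $\Gamma[W]$, in which $F[W]$ has minimum degree at least one.

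Next, I would find a (near-)perfect matching $\mathcal{M}$ in $F[W]$ and use it as in the perfect-antimatching argument of Lemma~\ref{lem:intermediate}: set $m_v = m_{\phi(v)} = \{v,\phi(v)\}$ for each matched pair of $\mathcal{M}$. The orbit of an initial transversal is then exactly the set of $\mathcal{M}$-transversals of $W$, each of size $\approx |W|/2$. In the regime where the lemma is actually applied within Theorem~\ref{thm:random} (i.e.\ where additionally $q \geq 1/n^{1.99}$), one has $|W| \gg \log n$, so one expects $F[W]$ to contain such a matching almost surely by a probabilistic argument, with a handful of exceptional unmatched vertices absorbed by the same trick as in the universal-vertex reduction.

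Legality is then verified by a minimum-degree argument: every vertex in a transversal $T$ has at most $\Delta(F) = O(\log n)$ non-neighbors in $\Gamma$, so $\Gamma[W][T]$ has minimum degree at least $|T| - 1 - O(\log n) \geq |T|/2$ once $|W| \gg \log n$, forcing $\Gamma[W][T]$ (and likewise $\Gamma[W][W \setminus T]$) to be connected. Thus each orbit state is legal in $\Gamma[W]$, which extends to a legal state of $\Gamma$ by the reduction above. The main obstacle is the almost-sure existence of the matching in $F[W]$: in the subrange of $q$ below the classical connectivity threshold $\log n / n$, the graph $F$ contains isolated vertices and possibly small troublesome components (e.g.\ cherries) that obstruct a perfect matching, and one must quantify carefully---via moment computations exploiting the sparsity of $F$---that these obstructions are rare enough to be handled by absorbing the affected vertices into the unmatched part of $W$.
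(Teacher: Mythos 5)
Your overall mechanism --- take a matching in the complement $H=\bar\Gamma$ as the antimatching, give leftover vertices singleton moves, and force connectivity of every state from the bound $\Delta(H)=O(\log n)$ of Theorem~\ref{thm:maxdegree} --- is the same mechanism the paper uses in its main case. But there is a genuine gap at the point you yourself flag, and the fix is not the one you propose. Your connectivity step reads ``$|T|-1-O(\log n)\geq |T|/2$ once $|W|\gg\log n$'': the inequality actually requires $|T|\gg\log n$, and $|T|=k$ is the number of matched pairs, not $|W|/2$. If $H[W]$ is, say, a union of stars, a maximal matching uses one edge per star and $k$ can be far smaller than $|W|$; the smallest states in your orbit are bare transversals of size $k$, and for those the minimum-degree argument gives nothing unless $k>2\Delta(H)$. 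So the entire proof hinges on a \emph{lower bound on the matching size}, which you never establish. Framing the obstacle as the existence of a (near-)perfect matching of $H[W]$ is a red herring: a perfect matching is not needed (unmatched vertices really are absorbed by singleton moves, since each has only $O(\log n)$ non-neighbours and so attaches to any state of size $>\Delta(H)$), but a \emph{large maximal} matching is indispensable, and ``absorbing'' the unmatched vertices cannot substitute for it because the absorption itself presupposes that the core transversal is already large and connected.

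The paper closes exactly this gap, in two cases. For $q$ bounded below by (a constant times) $n^{-3/2}$ it takes a maximum matching $M$ of $H$ and argues by contradiction: if $\Gamma(S)$ were disconnected for some transversal $S$, then $H(S)$ would contain a complete bipartite graph and hence a vertex of $H$-degree $\geq\lceil k/2\rceil$, forcing $k=O(\log n)$; but then the $n-2k$ vertices uncovered by $M$ form an independent set of $H$ of size $n-O(\log n)$, and a first-moment computation shows $\mcl{G}(n,q)$ almost surely has no such set. (A cheaper route to the same bound, which would repair your write-up directly, is that every edge of $H$ meets $V(M)$, so $e(H)\leq 2k\Delta(H)$ and hence $k\gtrsim qn^2/\log n\gg\log n$ once $qn^2\gg\log^2 n$.) For $q=o(n^{-3/2})$ the edges of $H$ are almost surely pairwise disjoint and a one-move system built on a single non-edge works outright; some such separate treatment of the very sparse range is also needed in your argument, since your restriction to $q\geq n^{-1.99}$ does not cover the window $\omega(n^{-2})\leq q<n^{-1.99}$, where $\Gamma$ is almost surely \emph{not} complete and the lemma still demands a legal system.
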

\begin{proof}
 Let $H$ be the complement of $\Gamma$. Note that $H$ is in $\mathcal G(n,q)$. We consider two cases.
\item
\textbf{Case 1: $q = o(n^{-3/2})$.}\\
The probability that a triple of vertices is joined by two edges in $H$ equals $3q^2-2q^3 \leq  3q^2$.
The expected number of such triples is of order ${n\choose 3}3q^2 \leq \frac12 n^3q^2\to 0$ as $n\to\infty$. Thus the probability that there is a pair of edges in $H$ with a common vertex tends to $0$ as $n\to\infty$.
We now describe a winning system when $\Gamma\in \mathcal G(n,p)$ is not a complete graph. Choose an edge in $H$ joining vertices $v,w$. Consider the system of moves consisting of $\{v,w\}$ and the singletons of all other vertices. Let the initial state $S = \{v\}$. Since every other vertex in $\Gamma$ is joined by an edge to both $v$ and $w$, the orbit of $S$ is legal.
\item
\textbf{Case 2: $q\neq o(n^{-3/2})$.}\\
Let $k$ be the number of edges in a maximal matching $M$ in $H$.
Denote by $D$ the set of all vertices that are not incident to any edge in $M$. Note that $|D| = n-2k$ and the subgraph $\Gamma(D)$ of $\Gamma$ is a clique. Consider the system of moves consisting of $\{v,w\}$ for all $(v,w)\in M$ and $\{v\}$ for all $v\in D$. Let $S$ be a state that has exactly one vertex in each $\{v,w\}$ for $(v,w)\in M$.

We first prove that $\Gamma(S)$ is connected. Suppose to the contrary that $\Gamma(S)$ can be decomposed
 into two subgraphs $\Gamma(S_1), \Gamma(S_2)$ not having adjacent vertices,
 Then the subgraph $H(S)$ induced by $S$ in $H$ contains the complete bipartite graph on $S_1,S_2$  as a subgraph.
 Thus there is a vertex in $H(S)$ of valence at least $\lceil\frac k 2\rceil$.

Let $d_H$ denote the maximum valence of $H$, and note that $d_H=O(qn)$ by Theorem~\ref{thm:maxdegree}. By hypothesis $q = O(\frac{\log n}{n})$, and so $d_H=O(\log n)$.
Combining with the previous conclusion about a vertex of $H(S)$, we find that $k=O(\log n)$.

  The set $D$ of vertices not incident to any edge in a largest matching has $n-2k$ vertices and the subgraph $H(D)$ has no edges. We will use that $k=O(\log n)$ to show below that the edgeless subgraph $H(D)$ almost surely does not exist.

  The probability that a set of $n-2k$ vertices has no edges equals $(1-q)^{\binom{n-2k}{2}}$,
   and hence the probability that there exists a subset of size $n-2k$ with no edges is bounded by $$\binom{n}{2k}(1-q)^{\binom{n-2k}{2}} \leq n^{2k}e^{-q{\binom{n-2k}{2}}}.$$
As $k=O(\log n)$, for all sufficiently large $n$ we have:
 $$n^{2k}e^{-q{\binom{n-2k}{2}}}    \leq e^{2c\log^2n - q\binom{n-2c\log n }{2}} $$
And hence since $q\geq b n^{-3/2}$ with $b>0$ for sufficiently large $n$ we have:
$$2c\log^2n - q\binom{n-2c\log n}{2}\leq
2c\log^2n - b n^{-3/2}\binom{n-2c\log n}{2}\to -\infty$$
And so the above probability goes to 0 as claimed.

Having proven that $\Gamma(S)$ is connected whenever $S$ is a state containing exactly one vertex from each edge in $M$, it remains to prove that $\Gamma(S\cup D')$ is connected for any $D'\subset D$. The probability that there is $d\in D$ that is not adjacent to any vertex in $S$ is $(n-2k)q^k
\leq (n-2k)\left(c\frac{\log n}{n}\right)^k$.
This last term tends to $0$ as $n\to \infty$ since $k\to \infty$.
\end{proof}

In response to Theorem~\ref{thm:random}, Fiz Pontiveros, Glebov and Karpas have proven the following result in \cite{FPGK},
which definitively explains when a random graph has a legal system:
\begin{thm}[Fiz Pontiveros-Glebov-Karpas]\label{PGK} Suppose that
\[\frac{\log n+\log\log n +\omega(1)}{n}\leq p \leq 1-\frac{\omega(1)}{n^2}.\]
Then a random graph in $\mcl{G}(n,p)$ almost surely has a legal system.
\end{thm}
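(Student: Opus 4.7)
The plan is to follow the two-regime strategy of Theorem~\ref{thm:random}, with sharper tools tailored to both extremes of the allowed range of $p$. First observe that the hypotheses on $p$ are essentially sharp: a pendant vertex $v$ with unique neighbor $u$ would force $u\in S \Leftrightarrow v\in S$ in any legal state (else $v$ is isolated in $\Gamma[S]$ or $\Gamma[V-S]$), hence $u\in m_v$, contradicting move property~(2); so any legal system requires $\delta(\Gamma)\geq 2$, whose a.a.s.\ threshold is precisely $p=(\log n+\log\log n+\omega(1))/n$. The upper bound $p\leq 1-\omega(1)/n^2$ just guarantees that $\Gamma$ is a.a.s.\ not complete. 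Thus the theorem says that this single necessary condition is also sufficient.

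I would partition the parameter range into three subregimes. The intermediate range $(2\log n)^{1/2}/n^{1/2}+\omega(n)/n^{1/2}\leq p\leq 1-(2\log n+\omega(n))/n$ is already handled by Lemma~\ref{lem:intermediate}. For the \emph{very-high-density} subregime $1-(2\log n)/n\leq p\leq 1-\omega(1)/n^2$, I would sharpen Lemma~\ref{lem:high}: writing $q=1-p$ and $H=\bar\Gamma\in\mcl{G}(n,q)$, a transversal state $S$ of the system built from a maximum matching in $H$ fails to be legal precisely when $H[S]$ contains a spanning complete bipartite subgraph; a Chernoff-type tail bound on the number of non-edges inside a candidate bipartition of $S$, combined with a more careful case split on the size of the maximum matching in $H$, pushes the allowed value of $q$ down to $\omega(1)/n^2$. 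For the \emph{very-low-density} subregime $(\log n+\log\log n+\omega(1))/n\leq p\leq (2\log n)^{1/2}/n^{1/2}$ the auxiliary bipartite graph $\Lambda'_A$ of Lemma~\ref{lem:intermediate} is almost surely disconnected, since $p^2\ll(\log n)/n$. Since $\Gamma$ is very sparse here, independent sets are large, so I would replace the perfect antimatching by a colored system whose parts are the color classes of a suitable proper coloring of $\Gamma$ (of expected size $\Theta(\log n)$), and use the minimum-degree-$2$ condition to select an initial state whose entire orbit consists of legal states.

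The hard part will be the very-low-density subregime. The orbit of a state in a colored system with $\Theta(n/\log n)$ parts has superpolynomial size, so a naive union bound over transversals is doomed; one must exploit the $M$-orbit structure directly, showing that \emph{some} transversal lies in a legal orbit rather than that every transversal is legal. I expect the crux to be a switching-type or second-moment argument, or alternatively the construction of a higher-order auxiliary bipartite structure---for instance one in which an edge encodes that a vertex of one color class is adjacent to \emph{some} vertex of a given other color class rather than all of them---whose connectivity threshold matches $p=(\log n)/n$ rather than $p=(\log n/n)^{1/2}$. The minimum-degree-$2$ hypothesis should enter precisely here, to rule out the local obstructions that would otherwise persist in specific orbits.
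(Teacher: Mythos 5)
First, a point of orientation: the paper does not prove Theorem~\ref{PGK}; it quotes it from the external reference \cite{FPGK} as a strengthening of the paper's own Theorem~\ref{thm:random}, so there is no in-paper proof to compare yours against. Judged on its own terms, your proposal is a research plan with a genuine gap, and the gap sits exactly where the new content of the theorem lies. The intermediate regime is indeed Lemma~\ref{lem:intermediate}, and the high-density regime needs no sharpening of Lemma~\ref{lem:high}: that lemma already covers all $q=O(\log n/n)$, and the only role of the upper bound on $p$ is to make $\Gamma$ a.a.s.\ non-complete, which happens precisely when $q\binom{n}{2}\to\infty$, i.e.\ $q=\omega(1)/n^2$; your proposed Chernoff refinement there is unnecessary. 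What remains --- and what Theorem~\ref{PGK} actually adds --- is the sparse window between $p\approx(\log n+\log\log n)/n$ and $p\approx(2\log n/n)^{1/2}$, where the auxiliary graphs $\Lambda'_A,\Lambda'_B$ are disconnected because $p^2\ll\log n/n$. For this window you offer only candidate strategies (``I expect the crux to be a switching-type or second-moment argument, or alternatively\ldots'') without committing to one or verifying that any closes. Your suggestion to use the color classes of a proper coloring is also internally inconsistent: at $p=\Theta(\log n/n)$ a proper coloring of $\mcl{G}(n,p)$ has $\Theta(\log n/\log\log n)$ classes of size $\Theta(n\log\log n/\log n)$, not $\Theta(n/\log n)$ classes of size $\Theta(\log n)$, and in neither case do you explain how legality of the whole orbit would be certified. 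The paper's remark preceding the theorem does signal that the FPGK argument uses colored systems with classes of cardinality greater than two, so your instinct points in the right direction, but the decisive combinatorial argument is absent.

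A small correction to your sharpness remark: for a pendant vertex $v$ with unique neighbor $u$, strong legality of every state in a legal orbit forces $v\in S\Leftrightarrow u\notin S$, not $u\in S\Leftrightarrow v\in S$ (the state $S=\{v\}$ is legal, but not strongly legal, with $u\notin S$). The contradiction survives either way: $m_v$ flips $v$ but by move property~(2) cannot flip $u$, so it destroys whichever biconditional the orbit satisfies; minimum valence~$2$ is therefore indeed necessary, consistent with the sentence following Theorem~\ref{PGK} in the paper.
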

As they explained a random graph $\Gamma\in\mcl{G}(n,p)$ that is not complete almost surely has a legal system precisely when $\Gamma$ almost surely has the property that each of its vertices has valence at least $2$.

For a slightly different setting of random regular graphs it would be interesting to resolve the following question.

\begin{prob}
	Does there a exist a constant $d \in \mathbb{N}$ such that a random $d$-regular graph on $n$ vertices almost surely has a legal system?
\end{prob}

We now describe a simpler bipartite version of Theorem~\ref{thm:random}.
\begin{prop}
Suppose
$$p \geq \frac{2\log n + \omega(n)}{n}$$ for some $\omega(n)$ with $\omega(n) \to \infty$.
Then a graph in $\mcl{G}(n,n,p)$ almost surely contains a legal system.
\end{prop}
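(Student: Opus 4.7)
The plan is to apply Lemma~\ref{lem:TBWS} using the colored system of moves $\{A,B\}$ determined by the bipartition of $\Gamma \in \mathcal{G}(n,n,p)$. First I would partition each side into two halves of roughly equal size: $A = A_1 \sqcup A_2$ with $|A_i| = \lfloor n/2\rfloor$ or $\lceil n/2 \rceil$, and similarly $B = B_1 \sqcup B_2$. The goal is to show that with high probability $\{A_i \cup B_j : i,j \in \{1,2\}\}$ is a legal orbit, which is precisely the $\{A,B\}$-orbit of $A_1 \cup B_1$.

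The key observation is that for each pair $(i,j)$, the subgraph of $\Gamma$ induced on $A_i \cup B_j$ is distributed as $\mathcal{G}(n/2, n/2, p)$ (up to $\pm 1$ from rounding). I would then rewrite the hypothesis $p \geq (2\log n + \omega(n))/n$ in the form required by Theorem~\ref{thm:Gnp2} with $n_1 = n_2 = n/2$: since $2\log(n/2) = 2\log n - 2\log 2$, we can find $\omega'(m) \to \infty$ with $p \cdot (n/2) \geq \log(n/2) + \omega'(n/2)$. Hence Theorem~\ref{thm:Gnp2} implies each of the four induced bipartite subgraphs $\Gamma[A_i \cup B_j]$ is almost surely connected, and a union bound over the four pairs shows that almost surely all four are connected simultaneously.

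It remains to deduce the hypotheses of Lemma~\ref{lem:TBWS} from this connectivity. Condition~\eqref{TBWS:1} is immediate: connectivity of the bipartite graph on $A_i \cup B_j$ implies any two vertices of $A_i$ are joined by a path through $B_j$ (and symmetrically for $B_j$-vertices through $A_i$). Condition~\eqref{TBWS:2} also follows, since in a connected bipartite graph with both parts nonempty, every vertex has at least one neighbor on the opposite side. Lemma~\ref{lem:TBWS} then yields that each $A_i \cup B_j$ is a legal state, so $\{A,B\}$ is a legal system, completing the proof.

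There is no serious obstacle: the argument is essentially a direct combination of Lemma~\ref{lem:TBWS} with Theorem~\ref{thm:Gnp2}, and the only point requiring care is the arithmetic matching of the threshold $p \geq (2\log n + \omega(n))/n$ to the connectivity threshold for $\mathcal{G}(n/2,n/2,p)$, which goes through because the $\omega(n) \to \infty$ slack comfortably absorbs the $O(1)$ loss from $2\log(n/2)$ versus $2\log n$.
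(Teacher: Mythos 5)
Your proof is correct and follows essentially the same route as the paper: split each side of the bipartition into two halves, apply Pal\'asti's connectivity theorem to the four induced bipartite subgraphs $\Gamma[A_i\cup B_j]$, and conclude that the orbit $\{A_i\cup B_j\}$ of the colored system $\{A,B\}$ is legal. The only differences are cosmetic --- you absorb the odd-$n$ case into an uneven split rather than deleting one vertex from each side as the paper does, you route the final deduction explicitly through Lemma~\ref{lem:TBWS} where the paper states it directly, and you correctly invoke Theorem~\ref{thm:Gnp2} where the paper's text mistakenly cites Theorem~\ref{thm:Gnp1}.
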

\begin{proof}
First suppose $n$ is even. Let $V = A\sqcup B$ be the bipartite structure of a graph $\Gamma$ in $\mcl G(n,n,p)$. Partition $A$ into $A_1\sqcup A_2$ where $|A_i|= \frac{n}2$ for $i= 1,2$. Similarly partition $B$. Each of the graphs induced by $A_i \cup B_j$ for $i,j \in \{1,2\}$ is a random graph in $\mcl G(\frac n 2, \frac n 2, p)$. By Theorem~\ref{thm:Gnp1} they are all connected. Thus the system of moves corresponding to the bipartition and its orbit consisting of states $A_i\cup B_j$ is a legal system.

If $n$ is odd, we use the same argument for the subgraph $\Gamma'$ of $\Gamma$ induced by $n-1$ vertices in each side of the bipartition. The remaining vertices $a,b$ are almost surely joined to a vertex in $B_1$ and a vertex in $B_2$, or a vertex in $A_1$ and a vertex in $A_2$ respectively. Thus the orbit of the state $A_1\cup B_1\cup \{a,b\}$ is legal.
\end{proof}

We close with a few further questions about the existence of legal systems for certain families of graphs:

\begin{prob}Does a generic 3-connected finite planar graph $\Gamma$ with $\girth(\Gamma)=4$ and $\curvature(\Gamma)=0$
have the property that there is no legal state?
\end{prob}

\begin{prob}
Is there a sensible statement (positive or negative) that can be made for generic bipartite graphs (or girth~$4$ graphs) with a fixed number of vertices and  $\curvature(\Gamma)=1-v/2+e/4 =0$ ?

How about if $\Gamma$ is planar?
\end{prob}

\bibliographystyle{alpha}
\bibliography{C:/Users/Dani/Dropbox/papers/wise}

%
%
\end{document}